\def\XXint#1#2#3{{\setbox0=\hbox{$#1{#2#3}{\int}$}
\vcenter{\hbox{$#2#3$}}\kern-.5\wd0}}
\newcommand{\unit}{\mathbbm 1}
\newcommand{\norm}[1]{\ensuremath{\left\|#1\right\|}}
\newcommand{\ip}[2]{\ensuremath{\left\langle#1,#2\right\rangle}}
\newcommand{\abs}[1]{\ensuremath{\left\vert#1\right\vert}}
\newcommand{\pr}[1]{\ensuremath{\left(#1\right)}}
\newcommand{\MC}[1]{\ensuremath{\mathcal{#1}}}
\newcommand{\D}{\mathcal{D}}
\newcommand{\R}{\mathbb{R}}
\newcommand{\Z}{\mathbb{Z}}
\newcommand{\C}{\mathbb{C}}
\newcommand{\N}{\mathbb{N}}
\newcommand{\F}{\ensuremath{\mathscr{F}}}
\newcommand{\U}{\mathscr{U}}
\newcommand{\V}{\mathscr{V}}
\newcommand{\Cn}{\ensuremath{\mathbb{C}^n}}
\newcommand{\J}{\ensuremath{\mathscr{J}}}
\newcommand{\f}[2]{\frac{#1}{#2}}
\newcommand{\Rd}{\mathbb{R}^d}
\renewcommand{\S}{\ensuremath{\text{Sig}_d}}
\renewcommand{\v}[1]{\ensuremath{\vec{#1}}}
\newcommand{\br}[1]{{\ensuremath{\left[ #1\right]}}}
\newcommand{\BMOVU}{\ensuremath{{{\text{BMO}}_{V, U} ^p}}}
\newcommand{\BMOVUT}{\ensuremath{{{\widetilde{\text{BMO}}}_{V, U} ^p}}}
\newcommand{\BMOvuT}{\ensuremath{{{\widetilde{\text{BMO}}}_{v, u} ^p}}}
\newcommand{\BMOUUT}{\ensuremath{{{\widetilde{\text{BMO}}}_{U, U} ^p}}}
\newcommand{\BMOVUTd}{\ensuremath{{{\widetilde{\text{BMO}}}_{U', V'} ^{p'}}}}
\newcommand{\MOVUT}[1]{\ensuremath{{{\widetilde{\text{MO}}}_{V, U} ^p}(#1)}}
\newcommand{\Mn}{\ensuremath{\mathbb{M}_{n \times n}}}
\newcommand{\inrd}{\ensuremath{\int_{\mathbb{R}^d}}}
\newcommand{\Ap}[1]{\ensuremath{\br{#1}_{\text{A}_p}}}
\newcommand{\AP}[2]{\ensuremath{\br{#1}_{\text{A}_p (#2)}}}
\newcommand{\Ainf}[2]{\ensuremath{\br{#1}_{\text{A}_{#2, \infty}^\text{sc}}}}
\newcounter{vremennyj}
\numberwithin{equation}{section}
\newtheorem{thm}{Theorem}[section]
\newtheorem{lm}[thm]{Lemma}
\newtheorem{cor}[thm]{Corollary}
\newtheorem{prop}[thm]{Proposition}
\newtheorem*{prop*}{Proposition}
\theoremstyle{remark}
\newtheorem*{rem*}{Remark}
\newcommand*{\defeq}{\mathrel{\rlap{%
                     \raisebox{0.3ex}{$\m@th\cdot$}}%
                     \raisebox{-0.3ex}{$\m@th\cdot$}}%
                     =}
\newcommand*{\eqdef}{=
										 \mathrel{\rlap{%
                     \raisebox{0.3ex}{$\m@th\cdot$}}%
                     \raisebox{-0.3ex}{$\m@th\cdot$}}%
										}
\begin{document}

\title[Commutators in the two weighted setting]{Commutators in the two scalar and \\ matrix weighted setting}

\author{Joshua Isralowitz}
\address{Department of Mathematics and Statistics\\
University at Albany, 1400 Washington Ave., Albany, NY, 12222.}\email{jisralowitz@albany.edu}

\author{Sandra Pott}
\address{Department of Mathematics\\
Lund University, P.O. Box 118
S-221 00 Lund, Sweden.}\email{sandra@maths.lth.se}

% \thanks{Supported  in part by the National Science Foundation under the grant DMS-1301579.}

\author{Sergei Treil}
\address{Department of Mathematics, \\
Brown University
151 Thayer Street,  Box 1917
Providence, RI 02912}\email{treil@math.brown.edu}

%\subjclass[2010]{Primary 42B20, 60G42, 60G46}

%\keywords{}

\begin{abstract}
In this paper we approach the two weighted boundedness of commutators via matrix weights.  This approach provides both a sufficient and a necessary condition for the two weighted boundedness of commutators with an arbitrary linear operator in terms of one matrix weighted norm inequalities for this operator.  Furthermore, using this approach, we surprisingly provide conditions that almost characterize the two matrix weighted boundedness of commutators with CZOs and completely arbitrary matrix weights, which is even new in the fully scalar one weighted setting.   Finally, our method allows us to extend the two weighted Holmes/Lacey/Wick results to the fully matrix setting (two matrix weights and a matrix symbol), completing a line of research initiated by the first two authors.
\end{abstract}

\maketitle

\hypersetup{linktocpage}
  \setcounter{tocdepth}{1}
\tableofcontents

\section{Introduction and main results}

Let $w$ be a weight on $\Rd$ and let $L^p(w)$ be the standard weighted Lebesgue space with respect to the norm \begin{equation*} \|f\|_{L^p( w)} = \left(\inrd |f(x)|^p w(x) \, dx \right)^\frac{1}{p}. \end{equation*} Furthermore, let A${}_p$ be the  Muckenhoupt class of weights $w$ satisfying \begin{equation*} \sup_{\substack{Q \subseteq \Rd \\ Q \text{ is a cube}}} \left( \fint_Q w (x) \, dx \right)\left( \fint_Q w ^{-\frac{1}{p - 1}} (x) \, dx\right)^{p-1} < \infty \end{equation*} where $\fint_Q$ is the unweighted average over $Q$ (which will also occasionally be denoted by $m_Q$).

Given a weight $\nu$, we say $b \in \text{BMO}_{\nu}$ if  \begin{equation*} \|b\|_{\text{BMO}_\nu} = \sup_{\substack{Q \subseteq \Rd \\ Q \text{ is a cube}} } \frac{1}{\nu(Q)} \int_Q |b(x) - m_Q b | \, dx  < \infty \end{equation*} (where $\nu(Q) = \int_Q \nu$) so that clearly
 $\text{BMO} = \text{BMO}_\nu$ when $\nu \equiv 1$.  Further, given a linear operator $T,$ define the commutator $[M_b, T] = M_b T - TM_b$ with $M_b$ being multiplication by $b$.  In the papers \cite{HLW1,HLW2} the authors extended earlier work of S. Bloom \cite{B} and proved that if $u, v \in \text{A}_p$ and $T$ is any Calder\'{o}n-Zygmund operator (CZO) then \begin{equation} \|[M_b, T]\|_{L^p(u) \rightarrow L^p(v)} \lesssim \|b\|_{\text{BMO}_\nu} \label{HWUpper}  \end{equation} where $\nu = (uv^{-1})^\frac{1}{p}$ and it was proved in \cite{HLW2} that if $R_s$ is the $s^\text{th}$ Riesz transform then \begin{equation} \|b\|_{\text{BMO}_\nu} \lesssim \max_{1 \leq s \leq d } \|[ M_b, R_s]\|_{L^p(u) \rightarrow L^p(v )}.  \label{HWLower}  \end{equation}

The purpose of this paper is to give largely self contained proofs of  \eqref{HWUpper} and \eqref{HWLower} and to extend both to the case of two matrix A${}_p$ weights and a matrix symbol $B$ by using arguments inspired by the matrix weighted techniques developed in \cite{GPTV}. Furthermore, as  byproducts of some of our results, we will  provide both a sufficient and a necessary condition for the two weight boundedness of commutators with an arbitrary linear operator in terms of matrix weighted norm inequalities for this operator. Furthermore, we will provide conditions that almost characterize the two matrix weighted boundedness of commutators with CZOs and completely arbitrary matrix weights, which is even new in the fully scalar one weighted setting.

In particular, let $W : \Rd \rightarrow \Mn$ be  an $n \times n$ matrix weight  (a positive definite a.e. $\Mn$ valued function on $\Rd$) and let $L^p(W)$ be the space of $\Cn$ valued functions $\v{f}$ such that \begin{equation*} \|\v{f}\|_{L^p(W)}  = \left(\inrd |W^\frac{1}{p}(x) \v{f}(x)|^p \, dx \right)^\frac{1}{p} < \infty. \end{equation*}   Furthermore, we will say that a matrix weight $W$ is a matrix A${}_p$ weight (see \cite{R}) if it satisfies \begin{equation} \label{MatrixApDef} \Ap{W} = \sup_{\substack{Q \subset \R^d \\ Q \text{ is a cube}}} \fint_Q \left( \fint_Q \|W^{\frac{1}{p}} (x) W^{- \frac{1}{p}} (y) \|^{p'} \, dy \right)^\frac{p}{p'} \, dx  < \infty. \end{equation}

 Before we state our results, let us rewrite Bloom's BMO condition in a way that naturally extends to the matrix weighted setting.  First, by multiple uses of the A${}_p$ property and H\"{o}lder's inequality, it is easy to see that \begin{align*} m_Q \nu   \approx (m_Q u) ^\frac{1}{p} (m_Q v^{-\frac{p'}{p}})^\frac{1}{p'}   \approx (m_Q u) ^\frac{1}{p} (m_Q v)^{-\frac{1}{p}}  \approx (m_Q u^\frac{1}{p}) (m_Q v^\frac{1}{p})^{-1}  \end{align*} (where again, $m_Q$ denotes unweighted average) so that $b \in \text{BMO}_\nu$ when $u$ and $v$ are A${}_p$ weights if and only if

\begin{equation*} \sup_{\substack{Q \subseteq \R \\ Q \text{ is a cube}} }  \fint_Q (m_Q v^\frac{1}{p}) (m_Q u^\frac{1}{p})^{-1}  |b(x) - m_Q b | \, dx  < \infty. \end{equation*}  Now if $U, V$ are matrix A${}_p$ weights, then we define $\BMOVU$ to be the space of $n \times n$ locally integrable matrix functions $B$  where \begin{equation*} \|B\|_{\BMOVU} = \sup_{\substack{Q \subseteq \Rd \\ Q \text{ is a cube}} }  \pr{\fint_Q \|(m_Q V^\frac{1}{p}) (B(x) - m_Q B)(m_Q U^\frac{1}{p})^{-1} \| \, dx}^\f{1}{p}  < \infty \end{equation*} so that $\|b\|_{\BMOVU}  \approx \|b\|_{\text{BMO}{\nu}}$  if $U, V$ are scalar weights and $b$ is a scalar function. Note that the $\BMOVU$ condition is much more naturally defined in terms of reducing matrices, which will be discussed in the next section.

  In this paper we will prove the following two theorems, the first of which is a generalization of a similar but much weaker result proved in \cite{I}.

\begin{thm} \label{BloomUB}  Let $T$ be any linear operator defined on scalar valued function where its canonical vector-valued extension $T \otimes \mathbf{I}_{n}$ is bounded on $L^p(W)$ for all $n \times n$ matrix A${}_p$ weights $W$ and all $n \in \N$ with bound depending on $T, n, d, p$, and $\Ap{W}$ (which is known to be true for all CZOs, see \cite{CIM} for a very easy proof).  If $U, V$ are $m \times m$ matrix A${}_p$ weights and $B$ is an $m \times m$ locally integrable matrix function for some $m \in \N$,  then \begin{equation*} \|[M_B, T\otimes \mathbf{I}_m]\|_{L^p(U) \rightarrow L^p(V)} \lesssim \|B\|_{\BMOVU} \end{equation*} with bounds depending on $T, m, d, p, \Ap{U}$ and $\Ap{V}$. \end{thm}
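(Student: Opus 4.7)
The plan is to reduce the two matrix weighted bound for $[M_B, T\otimes \mathbf{I}_m]$ to a single one matrix weighted bound for $T\otimes \mathbf{I}_{2m}$, which is available by hypothesis, by constructing an explicit auxiliary $(2m)\times(2m)$ matrix $\text{A}_p$ weight $\mathcal{W}$ from $U$, $V$, and $B$. The starting point is the algebraic identity
\begin{equation*}
    [M_B, T\otimes \mathbf{I}_m]\,\vec{f}(x) \;=\; \bigl(-\mathbf{I}_m,\; B(x)\bigr)\,(T\otimes \mathbf{I}_{2m})\!\begin{pmatrix} B\vec{f} \\ \vec{f} \end{pmatrix}\!(x),
\end{equation*}
verified by expanding the right-hand side and using that $T\otimes \mathbf{I}_{2m}$ acts block-diagonally. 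Writing $R(x):=(-\mathbf{I}_m, B(x))$ (as an $m\times 2m$ row of blocks) and $\vec{F}:=(B\vec{f},\,\vec{f})^{\top}$, one therefore expresses the commutator as $R\,(T\otimes \mathbf{I}_{2m})\vec{F}$.

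Define the auxiliary weight by
\begin{equation*}
    \mathcal{W}(x)^{2/p} \;:=\; L(x)^* D(x) L(x),\qquad L(x)=\begin{pmatrix} \mathbf{I}_m & -B(x) \\ 0 & \mathbf{I}_m \end{pmatrix},\quad D(x)=\begin{pmatrix} V(x)^{2/p} & 0 \\ 0 & U(x)^{2/p} \end{pmatrix}\!,
\end{equation*}
which is positive definite a.e.\ since $L$ is invertible. Two short quadratic-form computations yield the crucial facts: first, $\mathcal{W}^{2/p} - R^*V^{2/p}R = \operatorname{diag}(0, U^{2/p})\ge 0$ pointwise, giving
\begin{equation*}
    \|[M_B, T\otimes \mathbf{I}_m]\vec{f}\|_{L^p(V)} \;\le\; \|(T\otimes \mathbf{I}_{2m})\vec{F}\|_{L^p(\mathcal{W})};
\end{equation*}
second, $L(x)\vec{F}(x)=(0,\vec{f}(x))^{\top}$ forces $\langle \mathcal{W}(x)^{2/p}\vec{F}(x),\vec{F}(x)\rangle = |U(x)^{1/p}\vec{f}(x)|^2$, giving $\|\vec{F}\|_{L^p(\mathcal{W})} = \|\vec{f}\|_{L^p(U)}$. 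Chaining these two estimates and invoking the hypothesis with $n=2m$ collapses the problem to the one-weight bound
\begin{equation*}
    \|[M_B, T\otimes \mathbf{I}_m]\vec{f}\|_{L^p(V)} \;\lesssim\; \|T\otimes \mathbf{I}_{2m}\|_{L^p(\mathcal{W})\to L^p(\mathcal{W})}\,\|\vec{f}\|_{L^p(U)}.
\end{equation*}

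The main obstacle is therefore to establish
\begin{equation*}
    [\mathcal{W}]_{\text{A}_p} \;\lesssim\; C\bigl([U]_{\text{A}_p},\,[V]_{\text{A}_p},\,\|B\|_{\BMOVU}\bigr).
\end{equation*}
Setting $M := D^{1/2}L$, so that $M^*M = \mathcal{W}^{2/p}$, one has that $M$ and $\mathcal{W}^{1/p}$ differ by a (pointwise) unitary factor, which yields the key operator-norm identity $\|\mathcal{W}^{1/p}(x)\mathcal{W}^{-1/p}(y)\| = \|M(x)M^{-1}(y)\|$ and the direct block computation
\begin{equation*}
 M(x)M^{-1}(y) \;=\; \begin{pmatrix} V^{1/p}(x)V^{-1/p}(y) & V^{1/p}(x)(B(y) - B(x)) U^{-1/p}(y) \\ 0 & U^{1/p}(x) U^{-1/p}(y) \end{pmatrix}\!.
\end{equation*}
The two diagonal blocks average to the $\text{A}_p$ constants of $V$ and $U$; the off-diagonal block, after the decomposition $B(y)-B(x) = (B(y)-m_QB) - (B(x)-m_QB)$ over a cube $Q$, is exactly the quantity controlled by the matrix BMO norm of $B$, once one uses the $\text{A}_p$ conditions on $U, V$ (together with reducing matrices, discussed in the next section) to pass between pointwise values $V^{1/p}(x), U^{-1/p}(y)$ and the cube averages $m_Q V^{1/p}, (m_Q U^{1/p})^{-1}$ that appear in the definition of $\|B\|_{\BMOVU}$. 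This produces a polynomial bound for $[\mathcal{W}]_{\text{A}_p}$ in $\|B\|_{\BMOVU}$. Finally, the \emph{linear} dependence of the commutator norm on $\|B\|_{\BMOVU}$ is recovered by the standard scaling $B\mapsto tB$: since both sides of the target inequality are homogeneous of degree one in $B$, it suffices to prove a uniform bound on the unit matrix BMO ball and then rescale.
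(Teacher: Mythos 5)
Your reduction to a one-weight bound for $T\otimes\mathbf{I}_{2m}$ is essentially the paper's own argument: your $M=D^{1/2}L$ is (up to the harmless sign change $B\mapsto -B$) the paper's block matrix $\Phi$ from \eqref{Phi}, your $\mathcal{W}$ is its $W=(\Phi^*\Phi)^{p/2}$, and your two quadratic-form identities are a slightly more explicit version of the paper's observation that the norm of the upper-right corner of $\Phi\,(T\otimes\mathbf{I}_{2m})\,\Phi^{-1}$ is dominated by the norm of the whole block matrix. That part is correct, as is the final rescaling $B\mapsto tB$.

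The gap is in the step where you claim $[\mathcal{W}]_{\text{A}_p}$ is controlled by $\|B\|_{\BMOVU}$ ``once one uses the $\text{A}_p$ conditions \ldots\ to pass between pointwise values and cube averages.'' What the computation of $M(x)M^{-1}(y)$ actually gives, directly, is
\begin{equation*}
[\mathcal{W}]_{\text{A}_p}\;\lesssim\; \Ap{U}+\Ap{V}+\|B\|_{\BMOVUT}^p,
\end{equation*}
where $\|B\|_{\BMOVUT}$ is the double-average quantity \eqref{BMOT} involving the \emph{pointwise} factors $V^{1/p}(x)$, $U^{-1/p}(y)$ and the exponents $p'$ (inner) and $p$ (outer). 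Passing from the $L^1$-type condition defining $\BMOVU$ (cube averages $m_QV^{1/p}$, $(m_QU^{1/p})^{-1}$ and a first-power integrand) to this stronger condition is not a routine consequence of the $\text{A}_p$ conditions: one cannot bound $\|V^{1/p}(x)(m_QV^{1/p})^{-1}\|$ pointwise, and any H\"older splitting forces higher integrability of $B-m_QB$ against the weights, which is precisely a weighted John--Nirenberg self-improvement. That self-improvement is the content of the paper's entire Section \ref{JNSection} (Lemmas \ref{JN1}--\ref{JN3}, \ref{eJNLem}, \ref{TreilLem} and Corollary \ref{StrongJNCont}) and requires genuinely nontrivial inputs: matrix-weighted Triebel--Lizorkin lower bounds, a two-weight stopping time with geometric decay of the stopping cubes, and a reverse-H\"older/extrapolation argument. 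As written, your argument proves Lemma \ref{IntUB} (the bound with $\|B\|_{\BMOVUT}$ on the right-hand side), but the implication $\|B\|_{\BMOVUT}\lesssim_{\Ap{U},\Ap{V}}\|B\|_{\BMOVU}$, which is the real substance of Theorem \ref{BloomUB}, is asserted rather than proved.
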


\noindent In particular, in the case when $u, v,$ and $b$ are scalar valued (that is, $m = 1$), we have that \eqref{HWUpper} holds for any linear operator $T$  such that $T \otimes \mathbf{I}_n$  is bounded on $L^p(W)$ for all  $n \times n$ matrix A${}_p$ weights $W$ and all $n \in \N$ (and in particular we have \eqref{HWUpper} for all CZOs).

We will need one more definition before we state our second main result.  Let $\mathbf{I}_n$ denote the $n \times n$ identity matrix.  Given a finite collection $R = \{R_s\}_{s = 1}^N$ of linear operators defined on scalar valued functions, we say that $R$ is a lower bound collection if for any $n \in \N$ and any $n \times n$  matrix weight $W$ we have  \begin{equation} \Ap{W} ^\frac{1}{p}  \lesssim \max_{1 \leq s \leq N} \|R_s \otimes \mathbf{I}_n\|_{L^p(W) \rightarrow L^p(W)} \label{LBO} \end{equation} \noindent with the bound independent of $W$ (but not necessarily independent of $n$), and each $R_s \otimes \mathbf{I}_n$ is bounded on $L^p(W)$ if $W$ is a matrix A${}_p$ weight. It should be noted that, as one would expect, the Hilbert transform itself and more generally the collection $\{R_\ell\}_{\ell = 1}^d$ of Riesz transforms are lower bound collections (which will be proved in Lemma \ref{AveLem}.)

\begin{thm} \label{BloomLB}  If $R = \{R_s\}_{s = 1}^N$ is a lower bound collection, then for any $m \times m$ matrix A${}_p$ weights $U, V$ and any  $m \times m$ locally integrable matrix symbol $B$ we have \begin{equation*} \|B\|_{\BMOVU} \lesssim \max_{1 \leq s \leq N}  \|[M_B, R_s \otimes \mathbf{I}_{m }]\|_{L^p(U) \rightarrow L^p(V)}. \end{equation*}  \end{thm}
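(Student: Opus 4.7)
The strategy is to encode $U$, $V$, and $B$ into a single $2m \times 2m$ matrix weight $W$ on $\R^d$ and reduce Theorem \ref{BloomLB} to one application of the lower bound property of $R$ on $W$. With
\[ M(x) := \begin{pmatrix} \mathbf{I}_m & B(x) \\ 0 & \mathbf{I}_m\end{pmatrix}, \qquad D(x) := \begin{pmatrix} V(x) & 0 \\ 0 & U(x)\end{pmatrix}, \]
take $W$ to be the $2m \times 2m$ matrix weight determined by $W^{2/p} = M^{*} D^{2/p} M$. Since $|W^{1/p}v|^{2} = \langle W^{2/p}v, v\rangle$ depends only on $W^{2/p}$, one may use the (non-Hermitian) square root $W^{1/p}(x) = D(x)^{1/p} M(x)$ to compute, for any $F = (f,g)^{T}$,
\[ \|F\|_{L^p(W)}^{p} = \|f + Bg\|_{L^p(V)}^{p} + \|g\|_{L^p(U)}^{p}, \]
so that $F \mapsto MF$ is an isometry from $L^p(W)$ onto $L^p(D)$.

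The conjugation identity
\[ M\,(R_s \otimes \mathbf{I}_{2m})\,M^{-1} = (R_s \otimes \mathbf{I}_{2m}) + \begin{pmatrix} 0 & [M_B,\,R_s \otimes \mathbf{I}_m] \\ 0 & 0\end{pmatrix} \]
combined with the above isometry and the triangle inequality yields
\[ \|R_s \otimes \mathbf{I}_{2m}\|_{L^p(W) \to L^p(W)} \le \|R_s \otimes \mathbf{I}_m\|_{L^p(U) \to L^p(U)} + \|R_s \otimes \mathbf{I}_m\|_{L^p(V) \to L^p(V)} + \|[M_B,\,R_s \otimes \mathbf{I}_m]\|_{L^p(U) \to L^p(V)}. \]
Since $U, V$ are matrix A$_p$ weights and $R$ is a lower bound collection, the first two summands on the right are finite. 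Invoking the lower bound property of $R$ on $W$ then gives
\[ \Ap{W}^{1/p} \lesssim \max_{s}\|R_s \otimes \mathbf{I}_{2m}\|_{L^p(W) \to L^p(W)} \lesssim \Ap{U}^{1/p} + \Ap{V}^{1/p} + \max_{s}\|[M_B,\,R_s \otimes \mathbf{I}_m]\|_{L^p(U) \to L^p(V)}. \]

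The remaining step is to extract $\|B\|_{\BMOVU}$ from $\Ap{W}^{1/p}$. Working with the reducing matrices $\mathcal V_Q$, $\mathcal U_Q$ for $V$ and $U$ on a cube $Q$ (which in the matrix A$_p$ setting satisfy $\mathcal V_Q \approx m_Q V^{1/p}$ and $\mathcal U_Q \approx m_Q U^{1/p}$, as developed in the next section), the factorization $W^{1/p} = D^{1/p} M$ lets one compute reducing matrices for $W$ and $W^{-p'/p}$ in block form; the off-diagonal block of the resulting A$_p$ matrix encodes precisely $\mathcal V_Q(B - m_Q B)\mathcal U_Q^{-1}$. A block-matrix norm inequality then gives
\[ \sup_{Q} \fint_Q \|(m_Q V^{1/p})(B(x) - m_Q B)(m_Q U^{1/p})^{-1}\|\,dx \lesssim \Ap{W}, \]
whose $p$-th root is $\|B\|_{\BMOVU}$, completing the proof.

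The main obstacle is this last extraction. While the block structure of $W$ makes it plausible that its off-diagonal behaviour is controlled by $B$, rigorously isolating $\|B\|_{\BMOVU}$ inside the scalar invariant $\Ap{W}^{1/p}$ requires careful bookkeeping of the positive $p$-th roots $W^{\pm 1/p}$ across two independent cube variables, and in particular a comparison between those positive roots and the non-Hermitian factorization $D^{1/p}M$ used above. The cleanest route is through the reducing-matrix formulation of matrix A$_p$ developed in the next section, which replaces the noncommutative $p$-th root manipulations with uniform norm equivalences and reduces the entire extraction to elementary block-matrix norm estimates.
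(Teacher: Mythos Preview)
Your overall architecture is exactly the paper's: build the $2m\times 2m$ block weight $W$ from $U$, $V$, $B$ via the upper-triangular factorization, observe that conjugation by $M$ turns $R_s\otimes\mathbf I_{2m}$ on $L^p(W)$ into a block upper-triangular operator on $L^p(V)\oplus L^p(U)$ whose off-diagonal entry is the commutator, and then invoke the lower bound property $[W]_{\mathrm A_p}^{1/p}\lesssim\max_s\|R_s\otimes\mathbf I_{2m}\|_{L^p(W)\to L^p(W)}$. The conjugation identity and the block operator norm estimate are fine.

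There is, however, a genuine gap. Your chain of inequalities ends at
\[
\|B\|_{\BMOVU}\ \lesssim\ [W]_{\mathrm A_p}^{1/p}\ \lesssim\ \|R_s\otimes\mathbf I_m\|_{L^p(U)\to L^p(U)}+\|R_s\otimes\mathbf I_m\|_{L^p(V)\to L^p(V)}+\max_s\|[M_B,R_s\otimes\mathbf I_m]\|_{L^p(U)\to L^p(V)},
\]
and you declare the proof complete. But the first two summands on the right are nonzero constants depending on $U$ and $V$, not on $B$; what you have actually proved is an \emph{additive} bound of the shape $\|B\|_{\BMOVU}\le C_1(U,V)+C_2\max_s\|[M_B,R_s]\|$, which is strictly weaker than the theorem (consider $B\mapsto\epsilon B$ for small $\epsilon$). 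The paper fixes this with a homogeneity argument you have omitted: replace $B$ by $rB$, so that both the BMO quantity inside $[W]_{\mathrm A_p}$ and the commutator norm scale like $r$ while the two offending summands do not, divide through by $r$, and let $r\to\infty$. This rescaling step is essential and must be added.

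A second, smaller point concerns your extraction step. The block identity $W^{1/p}(x)W^{-1/p}(y)=D^{1/p}(x)M(x)M(y)^{-1}D^{-1/p}(y)$ has off-diagonal entry $V^{1/p}(x)(B(x)-B(y))U^{-1/p}(y)$, so what falls directly out of $[W]_{\mathrm A_p}$ is the two-variable quantity $\|B\|_{\BMOVUT}$, not $\|B\|_{\BMOVU}$. Your proposal to compute reducing matrices of $W$ in block form does not work cleanly: $|W^{1/p}(x)e|^p$ mixes $e_1$ and $B(x)e_2$ inside the $V$-block, so neither $\mathcal W_Q$ nor $\mathcal W_Q'$ is block triangular. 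The paper instead reads off $\|B\|_{\BMOVUT}\lesssim[W]_{\mathrm A_p}^{1/p}$ from the explicit off-diagonal block and then passes to $\|B\|_{\BMOVU}$ via the equivalences of Corollary~\ref{StrongJNCont}. For the direction you need here this passage is elementary (two applications of H\"older together with $\fint_Q\|\mathcal V_Q V^{-1/p}\|^{p'}\approx\|\mathcal V_Q\mathcal V_Q'\|^{p'}$ and $\fint_Q\|U^{1/p}\mathcal U_Q^{-1}\|^p\approx 1$), but it should be carried out rather than asserted via a block reducing-matrix computation that does not exist in the form you describe.
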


 %In fact, we will prove that it is enough to consider the testing functions \begin{equation*} \v{v}_{Q, \V{e}} (x) = \unit_Q(x) \abs{W^{-\frac{1}{p}} (x) %\v{e}}^{p'-2} W^{-\frac{2}{p}} (x) \v{e}    \end{equation*} over all cubes $Q \subseteq \Rd$ and all $\v{e} \in \Cn$.  Namely, if $\{R_j\}_{j = 1}$ is %the collection of Riesz transforms, then for any $n \times n$ matrix weight $W$ we have

%\begin{equation} \Ap{W} ^\frac{1}{p} \lesssim \sup_{Q, \v{e}} \pr{\max_{1 \leq s \leq d} \frac{ \norm{(R_s \otimes \textbf{I}_n) \v{v}_{Q, %\v{e}}}_{L^p(W)}}{\norm{\v{v}_{Q, \v{e}}}_{L^p(W)}}} \label{RieszTestCond} \end{equation} which in the case $p = 2$  reduces to the much more natural %looking testing condition \begin{equation*} \Ap{W} ^\frac{1}{p} \lesssim  \sup_{Q, \v{e}} \pr{\max_{1 \leq s \leq d}\frac{ \norm{(R_s \otimes %\textbf{I}_n) \unit_Q W^{-1}\v{e} }_{L^2(W)}}{\norm{\unit_Q W^{-1}\v{e}}_{L^2(W)}}}  \end{equation*}

Let us briefly outline the strategy for proving Theorems \ref{BloomUB} and \ref{BloomLB}.  In the next section, we will use matrix weighted arguments inspired by \cite{GPTV} to prove Theorems \ref{BloomUB} and \ref{BloomLB} in terms of a weighted BMO quantity $\|B\|_{\BMOVUT}$ that is equivalent to $\|B\|_{\BMOVU}$ when $U$ and $V$ are matrix A${}_p$ weights (see Corollary \ref{StrongJNCont}) but is much more natural for more arbitrary matrix weights $U$ and $V$.  More precisely, define  \begin{equation} \|B\|_{\BMOVUT}^p  = \sup_{\substack{Q \subseteq \Rd \\ Q \text{ is a cube}} } \fint_Q \pr{\fint_Q \norm{V^\frac{1}{p} (x) (B(x) - B(y)) U^{-\frac{1}{p}}(y) }^{p'} \, dy }^\frac{p}{p'} \, dx. \label{BMOT} \end{equation}
 \noindent In particular, in the  case of two scalar weights $u, v$ and a scalar symbol $b$,  note that \begin{equation*} \|b\|_{\widetilde{BMO}_{u, v} ^{p }} ^p = \sup_{\substack{Q \subseteq \Rd \\ Q \text{ is a cube}} } \fint_Q \pr{\fint_Q  |b(x) - b(y)|^{p'} u^{-\frac{p'}{p}}(y)  \, dy }^\frac{p}{p'} \, v(x) dx \end{equation*} which has a particulary simple and appealing appearance when $ p = 2$, namely \begin{equation*} \|b\|_{\widetilde{BMO}_{u, v} ^{2}} ^2 = \sup_{\substack{Q \subseteq \Rd \\ Q \text{ is a cube}} } \fint_Q \fint_Q  v(x) |b(x) - b(y)|^{2} u^{-1}(y)  \, dy  dx \end{equation*}

 We will then give relatively short proofs of the following two results in Section \ref{UBSection}.

\begin{lm} \label{IntUB} Let $T$ be any linear operator defined on scalar valued functions where its canonical vector-valued extension $T \otimes \mathbf{I}_{n}$ satisfies \begin{equation*} \|T \otimes \mathbf{I}_{n}\|_{L^p(W) \rightarrow L^p(W)} \leq  \phi (\Ap{W}) \end{equation*} for some positive increasing function $\phi$ (possibly depending on $T, d, n$ and $p$.)   If $U, V$ are $m \times m$ matrix A${}_p$ weights and $B$ is a locally integrable $m \times m$  matrix valued function for some $m \in \N$, then   \begin{equation*} \|[M_B, T \otimes \mathbf{I}_m]\|_{L^p(U) \rightarrow L^p(V)} \leq   \|B\|_{\BMOVUT} \phi\pr{3^{\frac{p}{p' }} \pr{\Ap{U} + \Ap{V} } + 1}  \end{equation*}\end{lm}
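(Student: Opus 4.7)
I follow the matrix-weighted strategy inspired by \cite{GPTV}: the goal is to reduce the two-weight commutator estimate to the one-weight matrix boundedness of $T\otimes\mathbf{I}_{2m}$ on an appropriately chosen matrix-weighted space $L^p(W)$.

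First I would introduce the $2m\times 2m$ block-diagonal matrix weight
\[
W(x) := \begin{pmatrix} V(x) & 0 \\ 0 & U(x) \end{pmatrix}.
\]
Since $W^{1/p}(x)W^{-1/p}(y)$ is block-diagonal, its operator norm equals the maximum of the norms of its two diagonal blocks. Using $\max(a,b)^{p'}\le a^{p'}+b^{p'}$, convexity of $t\mapsto t^{p/p'}$, and the universal lower bound $\Ap{\cdot}\ge 1$, a short direct computation yields
\[
\Ap{W}\;\le\;3^{p/p'}\bigl(\Ap{U}+\Ap{V}\bigr)+1,
\]
so that the hypothesis gives $\|T\otimes\mathbf{I}_{2m}\|_{L^p(W)\to L^p(W)} \le \phi\bigl(3^{p/p'}(\Ap{U}+\Ap{V})+1\bigr)$.

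The algebraic key is to encode the commutator as a component of a matrix commutator on $\mathbb{C}^{2m}$-valued functions. For $\v{f}\in L^p(U)$, set the augmented vector $\v{G}(x):=(B(x)\v{f}(x),\,\v{f}(x))^T$ and the pointwise unipotent matrix $A(x):=\bigl(\begin{smallmatrix}\mathbf{I}_m & -B(x)\\ 0 & \mathbf{I}_m\end{smallmatrix}\bigr)$. A direct computation yields $A(x)\v{G}(x)=(0,\v{f}(x))^T$ together with
\[
A(x)\bigl((T\otimes\mathbf{I}_{2m})\v{G}\bigr)(x)\,-\,(T\otimes\mathbf{I}_{2m})(A\v{G})(x) \;=\; \begin{pmatrix}-[M_B,T\otimes\mathbf{I}_m]\v{f}(x) \\ 0\end{pmatrix}.
\]
Because the second block of the right-hand side vanishes and $W$ is block-diagonal, the $L^p(W)$-norm of either side is exactly $\|[M_B,T\otimes\mathbf{I}_m]\v{f}\|_{L^p(V)}$; the remaining task is then to bound this quantity by $\|B\|_{\BMOVUT}\,\phi(\Ap{W})\,\|\v{f}\|_{L^p(U)}$.

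This bound is the heart of the argument, and I would proceed cube-by-cube: on each cube $Q$ the commutator is invariant under replacing $B$ by $B-\langle B\rangle_Q$, so the local oscillation of $B$ -- precisely as measured by the two-point $(p',p)$-integral defining $\|B\|_{\BMOVUT}$ -- can be absorbed through H\"older's inequality with exponents $(p',p)$, matched to the $(p',p)$-averaging in the matrix $A_p$ condition on $W$, while invoking the $L^p(W)$-boundedness of $T\otimes\mathbf{I}_{2m}$ exactly once. The main obstacle is this final step: executing the cube-by-cube absorption so that exactly one factor of $\|B\|_{\BMOVUT}\,\phi(\Ap{W})$ is produced while the cancellation between the two terms on the left-hand side of the displayed identity is preserved (a naive triangle inequality already fails when $B$ is constant, since both pieces are then nonzero but the difference vanishes). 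The definition of $\BMOVUT$ was engineered to match the $(p',p)$-averaging in the matrix $A_p$ condition on $W$, which is what makes the pairing tight and yields the stated bound.
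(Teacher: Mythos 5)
There is a genuine gap, and it sits exactly where you flag ``the heart of the argument.'' Your weight $W=\mathrm{diag}(V,U)$ carries no information about $B$: boundedness of $T\otimes\mathbf{I}_{2m}$ on $L^p(W)$ for this block-diagonal $W$ is literally just the pair of statements that $T\otimes\mathbf{I}_m$ is bounded on $L^p(U)$ and on $L^p(V)$ separately, so no amount of invoking that hypothesis can produce a factor of $\|B\|_{\BMOVUT}$. Your algebraic identity with $A(x)=\bigl(\begin{smallmatrix}\mathbf{I}_m & -B\\ 0&\mathbf{I}_m\end{smallmatrix}\bigr)$ is correct but circular: the left-hand side is the commutator of $T\otimes\mathbf{I}_{2m}$ with the multiplication operator $M_A$, which is no easier to bound than the original commutator (and the auxiliary vector $\v{G}=(B\v{f},\v{f})^T$ need not even lie in $L^p(W)$). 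The cube-by-cube ``absorption'' you then propose is not a routine finishing step; it is the entire content of the lemma, and you correctly observe yourself that a naive decomposition destroys the cancellation. So the proposal does not close.

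The idea you are missing is that $B$ must be built into the weight, not into the test function. The paper sets $\Phi=\bigl(\begin{smallmatrix} V^{1/p} & V^{1/p}B\\ 0 & U^{1/p}\end{smallmatrix}\bigr)$ and $W=(\Phi^*\Phi)^{p/2}$. Two elementary observations then finish everything with no cube-by-cube work. First, by polar decomposition $\Phi=\mathcal{U}W^{1/p}$ with $\mathcal{U}$ unitary a.e., so $\|\Phi(T\otimes\mathbf{I}_{2m})\Phi^{-1}\|_{L^p\to L^p}=\|T\otimes\mathbf{I}_{2m}\|_{L^p(W)\to L^p(W)}\le\phi(\Ap{W})$; since the $(1,2)$ block of $\Phi(T\otimes\mathbf{I}_{2m})\Phi^{-1}$ is exactly $V^{1/p}[M_B,T\otimes\mathbf{I}_m]U^{-1/p}$, the commutator norm is dominated by this quantity. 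Second, $\Ap{W}$ is computed from $\|\Phi(x)\Phi(y)^{-1}\|$, whose $(1,2)$ block is $V^{1/p}(x)(B(x)-B(y))U^{-1/p}(y)$ --- precisely the integrand in the definition of $\|B\|_{\BMOVUT}$ --- giving $\Ap{W}\lesssim \Ap{U}+\Ap{V}+\|B\|_{\BMOVUT}^p$. Rescaling $B\mapsto B\|B\|_{\BMOVUT}^{-1}$ and using linearity of $B\mapsto[M_B,T\otimes\mathbf{I}_m]$ yields the stated bound. Note in particular that the $\|B\|_{\BMOVUT}^p$ term must appear inside the $A_p$ estimate for $W$; your computation $\Ap{W}\le 3^{p/p'}(\Ap{U}+\Ap{V})+1$ is a symptom of having chosen a weight that forgot $B$.
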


\begin{lm} \label{IntLB}  If $R = \{R_s\}_{s = 1}^N$ is a lower bound collection of operators, then for any $m \times m$ matrix A${}_p$ weights $U, V$ and an $m \times m$ matrix symbol $B$ we have \begin{equation*} \|B\|_{\BMOVUT}  \lesssim \max_{1 \leq s \leq N}  \|[M_B, R_s \otimes \mathbf{I}_{n} ]\|_{L^p(U) \rightarrow L^p(V)} \end{equation*}  where the bound depends possibly on $n, p, d$ and $R$ but is independent of $U$ and $V$.  \end{lm}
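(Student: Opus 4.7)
The plan is to encode the two-weight commutator inequality into a one-weight matrix $A_p$ inequality on $\mathbb{C}^{2m}$ and then invoke the lower bound collection hypothesis on the resulting matrix weight. For a scalar parameter $\beta > 0$ (to be sent to $\infty$ at the end), set
$$\mathcal{B}_\beta(x) := \begin{pmatrix} \mathbf{I}_m & \beta B(x) \\ 0 & \mathbf{I}_m \end{pmatrix}, \qquad \mathcal{W}(x) := \begin{pmatrix} V(x) & 0 \\ 0 & U(x) \end{pmatrix},$$
and define the $2m \times 2m$ matrix weight $W_\beta$ by $W_\beta^{2/p} := \mathcal{B}_\beta^* \, \mathcal{W}^{2/p} \, \mathcal{B}_\beta$ (positive definite a.e., since $\mathcal{B}_\beta$ is invertible). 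Since $\|H^{1/p}f\|^p = \langle H^{2/p}f, f\rangle^{p/2}$ depends on $H$ only through $H^{2/p}$, pointwise multiplication by $\mathcal{B}_\beta$ is an isometry $L^p(W_\beta) \to L^p(\mathcal{W})$, and $L^p(\mathcal{W})$ is in turn comparable to $L^p(V) \oplus L^p(U)$.

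First I would establish the algebraic identity
$$\mathcal{B}_\beta \, (R_s \otimes \mathbf{I}_{2m}) \, \mathcal{B}_\beta^{-1} = \begin{pmatrix} R_s \otimes \mathbf{I}_m & \beta \, [M_B, R_s \otimes \mathbf{I}_m] \\ 0 & R_s \otimes \mathbf{I}_m \end{pmatrix},$$
which follows directly by applying both sides to $\binom{f}{g}$ and using $\mathcal{B}_\beta^{-1}\binom{f}{g} = \binom{f - \beta B g}{g}$. Transferring through the isometry above and using the triangle inequality on the block operator yields
$$\|R_s \otimes \mathbf{I}_{2m}\|_{L^p(W_\beta) \to L^p(W_\beta)} \lesssim \|R_s \otimes \mathbf{I}_m\|_{L^p(V)} + \|R_s \otimes \mathbf{I}_m\|_{L^p(U)} + |\beta| \, \|[M_B, R_s \otimes \mathbf{I}_m]\|_{L^p(U) \to L^p(V)}.$$
The first two terms are finite constants depending only on $m, p, d, R, \Ap{U}, \Ap{V}$, by the second clause in the lower bound collection definition.

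Next I would prove the matching lower bound $\Ap{W_\beta}^{1/p} \gtrsim |\beta| \, \|B\|_{\BMOVUT}$. Setting $A_\beta(x) := \mathcal{W}^{1/p}(x) \, \mathcal{B}_\beta(x)$, one has $A_\beta^* A_\beta = W_\beta^{2/p}$, so by polar decomposition $A_\beta(x) = Q(x) W_\beta^{1/p}(x)$ for some unitary $Q(x)$, which gives the exact equality $\|W_\beta^{1/p}(x) W_\beta^{-1/p}(y)\| = \|A_\beta(x) A_\beta^{-1}(y)\|$. Direct block multiplication then yields
$$A_\beta(x) \, A_\beta^{-1}(y) = \begin{pmatrix} V^{1/p}(x) V^{-1/p}(y) & \beta \, V^{1/p}(x)\bigl(B(x) - B(y)\bigr) U^{-1/p}(y) \\ 0 & U^{1/p}(x) U^{-1/p}(y) \end{pmatrix},$$
whose operator norm dominates the operator norm of its upper-right block. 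Substituting into the definition of $\Ap{W_\beta}$ and comparing with \eqref{BMOT} gives the claim.

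Combining the two estimates above via the lower bound collection hypothesis \eqref{LBO} applied to $W_\beta$ on $\mathbb{C}^{2m}$ yields
$$|\beta| \, \|B\|_{\BMOVUT} \lesssim C(m, p, d, R, \Ap{U}, \Ap{V}) + |\beta| \, \max_{1 \leq s \leq N}\|[M_B, R_s \otimes \mathbf{I}_m]\|_{L^p(U) \to L^p(V)};$$
dividing by $|\beta|$ and sending $\beta \to \infty$ absorbs the additive constant and gives the lemma. The main technical point is the polar decomposition identity: although $A_\beta$ is block upper triangular rather than positive, $\|A_\beta(x) A_\beta^{-1}(y)\|$ coincides exactly with $\|W_\beta^{1/p}(x) W_\beta^{-1/p}(y)\|$, and this is what makes the BMO quantity sitting in the off-diagonal block cleanly accessible from the matrix $A_p$ characteristic of $W_\beta$.
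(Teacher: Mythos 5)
Your proposal is correct and is essentially the paper's own argument: the paper defines $\Phi = \mathcal{W}^{1/p}\mathcal{B}_\beta$ (your $A_\beta$, with $\beta$ playing the role of the paper's rescaling parameter $r$), sets $W = (\Phi^*\Phi)^{p/2}$, uses the same polar-decomposition identity and block computation of $\Phi(x)\Phi(y)^{-1}$ to read off $\|B\|_{\BMOVUT}$ from $\Ap{W}$, and closes by the same conjugation/triangle-inequality bound followed by dividing by $r$ and letting $r\to\infty$. The only cosmetic difference is that you conjugate by $\mathcal{B}_\beta$ into $L^p(\mathcal{W})$ rather than by the full $\Phi$ into unweighted $L^p$; the computations coincide.
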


%While the methods used to prove Lemma \ref{IntUB} can be modified to provide  results for iterated commutators $[M_B, T\otimes\textbf{I}_m]_k := [M_B, %[M_B, T\otimes \textbf{I}_m]_{k-1}]$ (where as usual $[M_B, T \otimes \textbf{I}_m]_0 := T \otimes \textbf{I}_m$), these results are in general very messy %and we are unable to obtain (even qualitative) two weight Bloom type results that generalize those of \cite{LOR}.  Despite this,

Recall that a scalar weight $w$ on $\Rd$  is said to satisfy the A${}_\infty$ condition if we have $$[w]_{\text{A}_\infty} = \sup_{\substack{Q \subseteq \Rd \\ Q \text{ is a cube } }} \frac{1}{w(Q)} \int_Q  M( w\unit_Q )  < \infty $$ where $M$ is the ordinary Hardy-Littlewood maximal function on $\Rd$.   Further, for a matrix weight $U$ we define the ``scalar A${}_\infty$ characteristic" as in \cite{CIM,NPTV} by  \begin{equation*} \Ainf{U}{p} = \sup_{\v{e} \in \Cn} \br{ \abs{U^\frac{1}{p} \v{e}}^p }_{\text{A}_\infty} \end{equation*} which for any $1 < p < \infty$ obviously reduces to the ordinary A${}_{\infty}$ characteristic in the scalar setting.

At the end of Section \ref{UBSection} we will estimate $\|b\|_{\BMOUUT}$ for a scalar function $b$ and a matrix A${}_p$ weight $U$  to  give us the following quantitative version of Theorem \ref{BloomUB}.

\begin{prop} \label{BloomQuant} Assume $T$ satisfies the hypothesis of Lemma \ref{IntUB}.  Then there exists $C$ independent of $U, V, b$ and $T$ where
\begin{equation} \|[M_b, T \otimes \mathbf{I}_m] \|_{L^p(U) \rightarrow L^p(U)} \leq     \|b\|_{\text{BMO}}  \pr{\Ainf{U}{p} + \Ainf{U^{-\frac{p'}{p}}}{p'}  } \phi\pr{C\Ap{U}}  \label{OneWeightMixedQuant}.\end{equation}
\end{prop}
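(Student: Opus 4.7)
Proof plan. Setting $V = U$ and the scalar symbol $B = b\,\mathbf{I}_m$ in Lemma \ref{IntUB} reduces the claim to controlling $\|b\|_{\BMOUUT}$: the lemma gives
\begin{equation*}
\|[M_b, T\otimes\mathbf{I}_m]\|_{L^p(U)\to L^p(U)} \leq \|b\|_{\BMOUUT}\,\phi\pr{2\cdot 3^{p/p'}\Ap{U}+1},
\end{equation*}
so it is enough to prove
\begin{equation*}
\|b\|_{\BMOUUT} \lesssim_{n,p} \|b\|_{\mathrm{BMO}}\,\pr{\Ainf{U}{p}+\Ainf{U^{-p'/p}}{p'}}\,\Ap{U}^{1/p},
\end{equation*}
the residual $\Ap{U}^{1/p}$ factor then being absorbed into $\phi(C\Ap{U})$ by enlarging the universal constant $C$.

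Because $b$ is scalar, $\|b\|_{\BMOUUT}^p = \sup_Q \fint_Q\pr{\fint_Q |b(x)-b(y)|^{p'}\|U^{1/p}(x)U^{-1/p}(y)\|^{p'}\,dy}^{p/p'}dx$. Splitting $b(x)-b(y)$ at $m_Q b$ via $(s+t)^{p'}\leq 2^{p'-1}(s^{p'}+t^{p'})$ produces two pieces, one with $|b(x)-m_Q b|^{p'}$ factoring out in $y$ and one with $|b(y)-m_Q b|^{p'}$ still inside. For the first piece I use the reducing matrix $\mathcal{U}_Q$ for $U^{1/p}$ in $L^p$ together with the submultiplicative factorization $\|U^{1/p}(x)U^{-1/p}(y)\| \leq \|U^{1/p}(x)\mathcal{U}_Q^{-1}\|\,\|\mathcal{U}_Q U^{-1/p}(y)\|$; integrating $y$ out contributes $\lesssim_n\Ap{U}^{p'/p}$, and a Hilbert--Schmidt basis expansion bounds $\|U^{1/p}(x)\mathcal{U}_Q^{-1}\|^p$ by a sum $\sum_i w_i(x)$, where $w_i(x)=|U^{1/p}(x)\vec{h}_i|^p$ with $\vec{h}_i=\mathcal{U}_Q^{-1}\vec{e}_i$. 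Each $w_i$ is a scalar $A_\infty$ weight with constant $\leq\Ainf{U}{p}$ and has $m_Q w_i\approx 1$ by the reducing property. The sharp scalar weighted John--Nirenberg inequality for $A_\infty$ weights then yields $\fint_Q |b-m_Q b|^p w_i\,dx \lesssim \|b\|_{\mathrm{BMO}}^p\,\Ainf{U}{p}^p$, giving a first-piece bound of order $C_n\Ap{U}\,\|b\|_{\mathrm{BMO}}^p\,\Ainf{U}{p}^p$.

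The second piece is symmetric, using the reducing matrix $\mathcal{V}_Q$ for $U^{-1/p}$ in $L^{p'}$ and the factorization $\|U^{1/p}(x)U^{-1/p}(y)\|\leq\|U^{1/p}(x)\mathcal{V}_Q\|\,\|\mathcal{V}_Q^{-1}U^{-1/p}(y)\|$: one pulls the $x$-integral out (contributing $\lesssim_n\Ap{U}$) and Hilbert--Schmidt-decomposes $\|\mathcal{V}_Q^{-1}U^{-1/p}(y)\|^{p'}$ into scalar summands $|U^{-1/p}(y)\vec{k}_j|^{p'}$ with $\vec{k}_j=\mathcal{V}_Q^{-1}\vec{e}_j$. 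These scalar weights have $A_\infty$ constant $\leq\Ainf{U^{-p'/p}}{p'}$ and averages $\approx 1$, so weighted John--Nirenberg in $L^{p'}$ produces the analogous bound $C_n\Ap{U}\,\|b\|_{\mathrm{BMO}}^p\,\Ainf{U^{-p'/p}}{p'}^p$. Summing the two pieces, taking $p$-th roots, and absorbing the $\Ap{U}^{1/p}$ factor completes the proof. The main delicacy is coupling the reducing matrix on one factor with the basis expansion on the other so that every scalar weight that appears is of the form $|U^{1/p}\vec{e}|^p$ or $|U^{-1/p}\vec{e}|^{p'}$ for a unit vector $\vec{e}$; this is what lets the scalar $A_\infty$ characteristics $\Ainf{U}{p}$ and $\Ainf{U^{-p'/p}}{p'}$ enter through the classical scalar weighted John--Nirenberg inequality.
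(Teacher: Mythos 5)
Your estimate of $\|b\|_{\BMOUUT}$ is correct and is essentially the paper's argument: split $b(x)-b(y)$ at $m_Q b$, factor the matrix norm through a reducing matrix, integrate out one variable using the A${}_p$ condition, and reduce the remaining operator norm to finitely many scalar weights of the form $|U^{1/p}\v{e}|^p$ (resp.\ $|U^{-1/p}\v{e}|^{p'}$). The paper runs the sharp reverse H\"{o}lder inequality with $\epsilon\approx\Ainf{U}{p}^{-1}$, then H\"{o}lder, then the classical John--Nirenberg inequality by hand, which is precisely the content of the ``sharp weighted John--Nirenberg'' lemma you invoke (and it uses $\U_Q'$ where you use $\MC{U}_Q^{-1}$, an immaterial difference). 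So the inequality $\|b\|_{\BMOUUT}^p\lesssim_{n,p}\Ap{U}\pr{\Ainf{U}{p}^p+\Ainf{U^{-\frac{p'}{p}}}{p'}^p}\|b\|_{\text{BMO}}^p$ is fine.

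The gap is in how you feed this into Lemma \ref{IntUB}. You cannot ``absorb the residual $\Ap{U}^{1/p}$ into $\phi(C\Ap{U})$ by enlarging $C$'': $\phi$ is an arbitrary positive increasing function, so no inequality of the form $\Ap{U}^{1/p}\,\phi(c_1\Ap{U})\leq\phi(C\Ap{U})$ is available --- take $\phi$ bounded to see it fail --- and the same objection applies to the implied constant in your $\lesssim_{n,p}$. Since the proposition allows nothing outside $\phi$ except the factor $\|b\|_{\text{BMO}}\pr{\Ainf{U}{p}+\Ainf{U^{-\frac{p'}{p}}}{p'}}$, every power of $\Ap{U}$ and every dimensional constant must land \emph{inside} the argument of $\phi$. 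The repair is to use the intermediate, un-rescaled form of Lemma \ref{IntUB}, namely $\|[M_b,T\otimes\mathbf{I}_m]\|_{L^p(U)\to L^p(U)}\leq\phi\pr{3^{p/p'}\pr{2\Ap{U}+\|b\|_{\BMOUUT}^p}}$, insert your BMO estimate so the argument of $\phi$ becomes $C\Ap{U}+C\Ap{U}\pr{\Ainf{U}{p}^p+\Ainf{U^{-\frac{p'}{p}}}{p'}^p}\|b\|_{\text{BMO}}^p$, and only then rescale $b\mapsto b\br{\pr{\Ainf{U}{p}+\Ainf{U^{-\frac{p'}{p}}}{p'}}\|b\|_{\text{BMO}}}^{-1}$: linearity of the commutator in $b$ pulls the desired prefactor out front while the argument of $\phi$ collapses to $C\Ap{U}$. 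This is exactly how the paper concludes.
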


 It is interesting to remark that Lemma \ref{IntUB} and Proposition \ref{BloomQuant}  would provide new quantitative one and two weight commutator bounds in the scalar setting if the ``matrix A${}_p$ conjecture" $$ \|T\otimes \mathbf{I}_m\|_{L^p(W) \rightarrow L^p(W)} \lesssim \Ap{W}^{\max\{1, \frac{1}{p-1}\}}$$ were to hold for all CZOs $T$, even in the case $p = 2$. Also, we will prove that the collection of Riesz transforms form a lower bound operator in Section \ref{WienerSection} by utilizing the Schur multiplier/Wiener algebra ideas from \cite{LT}, and thus recovering \eqref{HWLower}.  In fact, we will show much more and prove the following surprising result.

\begin{thm} \label{RieszThm} Let $\{R_\ell\}_{\ell = 1}^d$ be the collection of Riesz transforms, and let $U$ and $V$ be any (not necessarily A${}_p$) matrix weights.  If $B$ is any locally integrable $m \times m$  matrix valued function then \begin{equation} \max\left\{\|B\|_{\BMOVUT}, \|B\|_{\BMOVUTd}\right\}\lesssim \max_{1 \leq \ell\leq d}  \|[M_B, R_\ell \otimes \mathbf{I}_{m }]\|_{L^p(U) \rightarrow L^p(V)}. \label{RieszThmA}\end{equation}
\end{thm}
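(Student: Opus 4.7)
The key feature of Theorem \ref{RieszThm} that distinguishes it from Lemma \ref{IntLB} is that no matrix A${}_p$ assumption is placed on $U$ or $V$. Consequently, the arguments of Lemma \ref{IntLB}, which rest on Riesz transforms being a lower bound collection (and hence use a quantitative matrix A${}_p$ bound) are unavailable. Instead, the plan is to follow the Schur multiplier / Wiener algebra strategy from \cite{LT}: exploit the sole fact that each Riesz kernel $K_\ell(x,y) = c_d(x_\ell - y_\ell)/|x-y|^{d+1}$ is \emph{non-degenerate} in the sense that for every cube $Q$ and an appropriate direction $\ell$, the kernel $K_\ell$ is comparable to a non-zero constant $c/|Q|$ uniformly on a pair of equal-size cubes $Q^\sharp \times Q$, where $Q^\sharp$ is an appropriate translate of $Q$ in direction $e_\ell$. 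I will first prove the bound for $\|B\|_{\BMOVUT}$ and then deduce the bound for $\|B\|_{\BMOVUTd}$ by an adjoint / duality argument.

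\textbf{Testing step.} For a fixed cube $Q$, dualize the outer $L^{p/p'}$ integral and the matrix norm inside $\|B\|_{\BMOVUT}$ to rewrite it as a supremum, over normalized test tensors $\phi(x,y)\ge 0$ with $\fint_Q \phi(x,y)^p\,dy \le 1$ and unit vector fields $\mathbf{e}(x,y), \mathbf{f}(x,y)\in\C^m$, of the bilinear expression
\begin{equation*}
\fint_Q \fint_Q \phi(x,y) \, \bigl\langle V^{\frac{1}{p}}(x)(B(x)-B(y))U^{-\frac{1}{p}}(y)\,\mathbf{e}(x,y),\,\mathbf{f}(x,y)\bigr\rangle \, dy\, dx.
\end{equation*}
On the other hand, the bilinear form of the commutator reads
$\langle [M_B, R_\ell\otimes\mathbf{I}_m] f, g\rangle = \iint K_\ell(x,y)\langle (B(x)-B(y))f(y), g(x)\rangle\, dy\, dx$,
and for $x\in Q^\sharp$, $y\in Q$ the factor $K_\ell(x,y)$ equals the constant $c/|Q|$ up to a smooth (and in particular Wiener-algebra) multiplicative factor. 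The plan is thus to manufacture scalar-valued test functions $f=f_{Q,\ell,\mathbf{e},\phi}$ supported on $Q$ and $g=g_{Q,\ell,\mathbf{f}}$ supported on $Q^\sharp$ so that, up to the constant $c/|Q|$ arising from $K_\ell$, the integrand of $\langle[M_B,R_\ell\otimes\mathbf{I}_m]f,g\rangle$ reproduces the integrand above.

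\textbf{Wiener algebra inversion.} The production of such test functions proceeds by the Wiener-algebra argument of \cite{LT}. After rescaling $Q^\sharp\times Q$ to a unit cube, $|Q|\cdot K_\ell$ is a smooth non-vanishing function whose Fourier series is absolutely convergent, hence lies in the Wiener algebra $A(\T^d\times\T^d)$; Wiener's lemma then gives that its reciprocal is also in this algebra. One can therefore expand the desired integrand $\phi(x,y)\,\mathbf{e}(x,y)\otimes\overline{\mathbf{f}(x,y)}$ into an absolutely convergent Fourier series in which each atom has the tensor form $K_\ell(x,y)\cdot h_k(y)\overline{\tilde h_k(x)}$. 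Each atom corresponds to pairing the commutator with a product $f_k\otimes g_k$ of localized functions, and summing in $k$ (using the absolute convergence of the coefficients to control the loss) yields
\begin{equation*}
\|B\|_{\BMOVUT} \;\lesssim\; \max_{1\le \ell\le d}\,\|[M_B, R_\ell\otimes \mathbf{I}_m]\|_{L^p(U)\to L^p(V)},
\end{equation*}
with the test functions chosen so that their $L^p(U)$ and $L^{p'}(V')$ norms are appropriately controlled by the $\phi,\mathbf{e},\mathbf{f}$ data.

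\textbf{Dual BMO and main obstacle.} Using $(U')^{1/p'}=U^{-1/p}$ and $(V')^{-1/p'}=V^{1/p}$, one sees that $\|B\|_{\BMOVUTd}$ is the analogue of $\|B\|_{\BMOVUT}$ but with the weights $V^{1/p}(x),U^{-1/p}(y)$ replaced by $U^{-1/p}(x),V^{1/p}(y)$, the exponents $p$ and $p'$ interchanged, and the order of the two integrals reversed. Invoking the identity $\|A\|=\|A^*\|$, swapping the variables $x\leftrightarrow y$, and using the adjoint identity $[M_B,R_\ell\otimes\mathbf{I}_m]^*=[M_{B^*},R_\ell\otimes\mathbf{I}_m]$ (which follows from $R_\ell^*=-R_\ell$), the estimate for $\|B\|_{\BMOVUTd}$ is obtained by applying the first half of the argument to $B^*$ with the substitutions $(p,U,V)\mapsto(p',V',U')$ and noting that $\|[M_{B^*},R_\ell\otimes\mathbf{I}_m]\|_{L^{p'}(V')\to L^{p'}(U')}=\|[M_B,R_\ell\otimes\mathbf{I}_m]\|_{L^p(U)\to L^p(V)}$. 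The principal obstacle is the Wiener-algebra inversion step: verifying that the test-function family produced by the Fourier expansion is rich enough to saturate the supremum defining $\|B\|_{\BMOVUT}$ for matrix-valued $B$ with completely arbitrary $U,V$ (no A${}_p$ to fall back on), while keeping the constants from the absolutely convergent series independent of $U$ and $V$.
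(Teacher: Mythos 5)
Your proposal takes a genuinely different route from the paper, and as written it has a gap at its core. The quantity $\|B\|_{\BMOVUT}$ is a mixed average over the \emph{diagonal} box $Q\times Q$, whereas your testing scheme produces the commutator's bilinear form over an \emph{off-diagonal} box $Q^\sharp\times Q$ (you need the separation precisely so that $K_\ell$ is non-vanishing and its reciprocal lies in the Wiener algebra). Passing from $Q\times Q$ back to $Q^\sharp\times Q$ requires comparing $V^{1/p}(x)$ for $x\in Q$ with its values on the translate $Q^\sharp$, and with completely arbitrary weights --- no A${}_p$, no doubling --- there is no mechanism for this comparison; that is exactly where the argument breaks. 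A second unresolved point, which you flag yourself, is that the optimizing fields $\mathbf e(x,y),\mathbf f(x,y)$ and the dualizing density are only measurable, so the ``absolutely convergent Fourier expansion of the desired integrand into atoms of the form $K_\ell(x,y)h_k(y)\overline{\tilde h_k(x)}$'' is not available without substantial extra work. (Minor: the normalization $\fint_Q\phi^p\,dy\le1$ is not the correct dual normalization for the mixed norm $L^p_x(L^{p'}_y)$, whose dual is $L^{p'}_x(L^p_y)$.)

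The paper's route avoids both problems and, contrary to your opening premise, does \emph{not} abandon the block-matrix framework of Lemma \ref{IntLB}: it keeps $\Phi=\left(\begin{smallmatrix}V^{1/p}&V^{1/p}B\\0&U^{1/p}\end{smallmatrix}\right)$ and $W=(\Phi^*\Phi)^{p/2}$, but localizes to truncated sets $E_M\subseteq\mathscr B$ on which all weights are bounded, so that $\AP{W}{E_M}^{1/p}$ (which controls $\|B\|_{\MOVUT{E_M}}$) is finite and comparable to $\|\MC{W}_{E_M}'\MC{W}_{E_M}\|$, i.e.\ to the norm of the averaging operator $A_{E_M}$ on $L^p(W)$ (Proposition \ref{AveProp}). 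The Wiener-algebra/Schur-multiplier input is then used in the opposite direction from yours: multiplying $K_\ell(x,y)$ by $\psi_\ell((x-y)/\varepsilon)$ with $\psi_\ell(z)=z_\ell|z|^{d-1}\phi^2(z)\in W_0(\Rd)$ and summing over $\ell$ produces the \emph{constant} kernel $c_d|\mathscr B|^{-1}\unit_{\mathscr B\times\mathscr B}$ on the diagonal box, so $A_{E_M}$ is dominated by the restricted Riesz transforms; the support separation required by \eqref{restSchurbdd} is supplied by the sets $E_k^1,E_k^2$ of Lemma $3.1$ of \cite{LT}, not by translating the cube. One then splits the restricted $L^p(W)$ norm of $R_\ell\otimes\mathbf I_{2m}$ into the commutator term plus diagonal terms, rescales $B\mapsto rB$ and lets $r\to\infty$, then lets $M\to\infty$ by Fatou; the bound for $\|B\|_{\BMOVUTd}$ falls out of $\|\MC{W}_{E_M}'\MC{W}_{E_M}\|=\|\MC{W}_{E_M}\MC{W}_{E_M}'\|$ rather than from a separate duality argument. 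To rescue your direct-testing approach you would at minimum need a diagonal-to-off-diagonal comparison that uses no regularity of the weights, and that is the missing idea.
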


\noindent  Moreover, we will show that an Orlicz ``bumped" version of these conditions are sufficient for the general two matrix weighted boundedness of a CZO. In particular, we will prove the following result in Section \ref{UBSection}, which is similar in statement and proof to Lemma $4$ in \cite{IPR}.

\begin{prop} \label{OrliczProp} Let $T$ be a CZO, $U$ and $V$ be any $m \times m$ matrix weights, and $B$ be any locally integrable $m \times m$  matrix valued function.  Let $C$ and $D$ be Young functions with $\overline{C} \in B_{p'}$ and $\overline{D}  \in B_p$ where $\overline{C}$ and $\overline{D}$ are the conjugate Young functions to $C$ and $D$, respectively.  Then \begin{equation*} \|[M_B, T\otimes \mathbf{I}_m]\|_{L^p(U) \rightarrow L^p(V)} \lesssim \min\{\kappa_1, \kappa_2\} \end{equation*}  where
\[
\begin{split}\kappa_{1} & = \sup_{Q}\|\|V^\frac{1}{p} (x) (B(x)-B(y)) U^{-\frac{1}{p}} (y) \|_{C_{x},Q}\|_{D_{y},Q}\\
\kappa_{2} & =\sup_{Q}\|\|V^\frac{1}{p} (x) (B(x)-B(y)) U^{-\frac{1}{p}} (y) \|_{D_{y},Q}\|_{C_{x},Q}
\end{split}
\] \end{prop}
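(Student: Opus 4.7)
The plan is to combine a sparse bilinear domination of the matrix-valued commutator with a two-variable generalized H\"older inequality in Orlicz spaces, paralleling the scalar argument in Lemma 4 of \cite{IPR}. Fix $\vec{f} \in L^p(U)$ and test against $\vec{g} \in L^{p'}(V^{-p'/p}) = (L^p(V))^*$; starting from the kernel representation
\[
\langle [M_B, T \otimes \mathbf{I}_m]\vec{f}, \vec{g}\rangle = \iint K(x,y)\,\langle (B(x)-B(y))\vec{f}(y), \vec{g}(x)\rangle_{\mathbb{C}^m}\,dx\,dy,
\]
apply the Lerner-Ombrosi-Rivera-R\'ios sparse commutator bound (entrywise to $[M_{B_{ij}}, T]$ or via its kernel form) to obtain a sparse family $\mathcal{S}$ with
\[
|\langle [M_B, T \otimes \mathbf{I}_m]\vec{f}, \vec{g}\rangle| \lesssim \sum_{Q \in \mathcal{S}} \frac{1}{|Q|}\iint_{Q \times Q} |\langle (B(x)-B(y))\vec{f}(y), \vec{g}(x)\rangle|\,dx\,dy.
\]
The bound $\|B(x)-B(y)\| \le \|B(x)-m_Q B\| + \|B(y)-m_Q B\|$ lets one move between this double-integral form and the standard ``fixed-centre'' sparse form at the cost of an absolute constant.

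Next, exploiting self-adjointness of $U^{\pm 1/p}(y)$ and $V^{\pm 1/p}(x)$, insert the weights:
\[
|\langle (B(x)-B(y))\vec{f}(y), \vec{g}(x)\rangle| \le \Phi(x,y)\,\alpha(y)\,\beta(x),
\]
with $\Phi(x,y) = \|V^{1/p}(x)(B(x)-B(y))U^{-1/p}(y)\|$, $\alpha(y) = |U^{1/p}(y)\vec{f}(y)|$, and $\beta(x) = |V^{-1/p}(x)\vec{g}(x)|$, where by design $\|\alpha\|_{L^p} = \|\vec{f}\|_{L^p(U)}$ and $\|\beta\|_{L^{p'}} = \|\vec{g}\|_{(L^p(V))^*}$. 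Applying the generalized Orlicz H\"older inequality on $Q$ first in the $x$-variable with the conjugate pair $(C,\bar C)$ and then in the $y$-variable with $(D,\bar D)$ yields
\[
\frac{1}{|Q|^2}\iint_{Q \times Q} \Phi\,\alpha\,\beta\,dx\,dy \lesssim \bigl\|\|\Phi(x,y)\|_{C_x,Q}\bigr\|_{D_y,Q}\,\|\alpha\|_{\bar D,Q}\,\|\beta\|_{\bar C,Q} \le \kappa_1\,\|\alpha\|_{\bar D,Q}\|\beta\|_{\bar C,Q};
\]
reversing the order of the two Orlicz H\"older steps replaces $\kappa_1$ by $\kappa_2$.

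To sum over $\mathcal{S}$, use sparseness to pick pairwise disjoint $E_Q \subset Q$ with $|E_Q| \ge \tfrac12|Q|$, giving
\[
\sum_{Q \in \mathcal{S}} |Q|\,\|\alpha\|_{\bar D,Q}\,\|\beta\|_{\bar C,Q} \lesssim \inrd (M_{\bar D}\alpha)(x)\,(M_{\bar C}\beta)(x)\,dx \le \|M_{\bar D}\alpha\|_{L^p}\|M_{\bar C}\beta\|_{L^{p'}} \lesssim \|\alpha\|_{L^p}\|\beta\|_{L^{p'}},
\]
where the last inequality invokes the P\'erez theorem: $\bar D \in B_p$ forces $M_{\bar D}$ bounded on $L^p$, and $\bar C \in B_{p'}$ forces $M_{\bar C}$ bounded on $L^{p'}$. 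Duality then yields $\|[M_B, T\otimes \mathbf{I}_m]\|_{L^p(U) \to L^p(V)} \lesssim \min\{\kappa_1,\kappa_2\}$.

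The main obstacle is the matrix-valued sparse domination in Step 1: one must preserve $B(x)-B(y)$ as a single matrix factor (rather than collapsing to scalar entries $|B_{ij}(x)-B_{ij}(y)|$), since only the intact version produces the operator-norm quantity $\|V^{1/p}(B(x)-B(y))U^{-1/p}\|$ after weight insertion that matches the two-weight bump hypothesis. Handling this carefully is the content of the scheme in \cite{IPR}, which the present proof follows.
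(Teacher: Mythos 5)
The second half of your argument --- inserting $U^{\pm 1/p}$, $V^{\pm 1/p}$, applying the two-variable generalized Orlicz--H\"older inequality on each sparse cube in either order, summing via the disjoint sets $E_Q$, and closing with the P\'erez $B_p$/$B_{p'}$ maximal function bounds and duality --- is exactly the paper's proof. The gap is in your Step 1, and you have in fact put your finger on it yourself. Applying the Lerner--Ombrosi--Rivera-R\'{\i}os bound entrywise to $[M_{B_{ij}},T]$ produces sums of $|B_{ij}(x)-B_{ij}(y)|$ and destroys the single matrix factor $B(x)-B(y)$ that the weight insertion requires; and the paper explicitly remarks that it is unclear whether the standard ideas from the proof of Theorem 1.1 of \cite{LOR1} can be adapted to give the needed bilinear form of the domination \emph{even in the scalar case}, so "via its kernel form" is not a proof either. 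Deferring to "the scheme in \cite{IPR}" does not close this: the required statement is Theorem \ref{SparseLem} of the present paper, which is only implicit in \cite{IPR} and is proved here by a specific mechanism you do not supply.

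That mechanism is the $2\times 2$ block conjugation trick: set $\Phi(x)=\begin{pmatrix}\mathbf{I}_m & B(x)\\ 0 & \mathbf{I}_m\end{pmatrix}$ and $\tilde f = (\v{f},\v{f})^t$, observe that the first block component of $\Phi\,(T\otimes \mathbf{I}_{2m})\,\Phi^{-1}\tilde f$ is $(T\otimes\mathbf{I}_m)\v{f} + [M_B, T\otimes\mathbf{I}_m]\v{f}$, apply the convex body sparse domination of Nazarov--Petermichl--Treil--Volberg (Theorem 3.4 of \cite{NPTV}) to $\Phi^{-1}\tilde f$, multiply back by $\Phi(x)$, and subtract the sparse representation of $(T\otimes\mathbf{I}_m)\v{f}$. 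This is what yields the exact identity \eqref{SparseEq} with the kernels $k_Q(x,y)$ and the intact factor $B(x)-B(y)$, after which your Steps 2--3 go through verbatim. Without this (or an equivalent substitute), the proposal does not establish the proposition.
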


\noindent We refer the reader to Section $5.2$ in \cite{IPR} for the standard Orlicz space related definitions used in the statement of the Proposition \ref{OrliczProp}.

 It is important to emphasize that Theorem \ref{RieszThm} and Proposition \ref{OrliczProp} are new,  even in the scalar $p = 2$ setting of a single weight. It is also interesting to note that formally ``removing" $b$ from the condition $\|b\|_{\BMOvuT} ^p < \infty$ in the case of two scalar weights $u$ and $v$ reduces to the classical two weight A${}_p$ condition $(u, v) \in \text{A}_p$.  From this perspective, $\|b\|_{\BMOvuT} ^p$ can be thought of as a first order analogy of the ``zero order"  condition $(u, v) \in \text{A}_p$.  In particular, it is well known (see \cite{MW}) that $(u, v) \in \text{A}_p$ is necessary for the two weighted norm boundedness of the Hilbert transform, and that an Orlicz bumped version of $(u, v) \in \text{A}_p$ (in particular either of the equivalent conditions in Proposition \ref{OrliczProp} when again $b$ is ``removed") is sufficient for the two weighted boundedness of any CZO $T$ , see \cite{L}.  Thus, Theorem \ref{RieszThm} and Proposition \ref{OrliczProp} should be thought of as a first order commutator version of the well known ``zero order" scalar results above.

A key tool for the proof of  Proposition \ref{OrliczProp} is a new convex body domination theorem, which is interesting in its own right and therefore stated here. It was
  essentially proven in \cite{IPR} (though not explicitly stated) and is  more suitable for us here than the sparse convex body domination of commutators in Theorem $4$ from \cite{IPR}.

To state the result, we need some notation.
 Let $\D$ be a dyadic grid of cubes in $\Rd$. Recall that $\mathcal{S}\subset\D$
is a sparse family if for every $Q\in\mathcal{S}$ there exists
a measurable subset $E_{Q}\subset Q$ such that
\begin{enumerate}
\item $|Q|\leq 2|E_{Q}|.$
\item The sets $E_{Q}$ are pairwise disjoint.
\end{enumerate}

 \begin{thm} \label{SparseLem} Let $T$ be a CZO.
For every $\C^m$ valued function $\v{f}$ with compact support and every $m \times m$ valued matrix function $B$ such that $B\v{f} \in L^1$, there exists   $3^d$ sparse collections $\MC{S}_j$ of
dyadic cubes, a constant $c_{d, m, T}$, and for each $Q \in \MC{S}_j$ a function $k_Q : Q \times Q \rightarrow \mathbb{R}$ with $\|k_Q\|_{L^\infty (Q \times Q)} \leq 1$  such that
\begin{equation} \label{SparseEq}
  [M_B,T \otimes \mathbf{I}_m]\v{f}(x)
  = c_{d,m, T}\sum_{j=1}^{3^{d}}\sum_{Q\in\mathcal{S}_{j}} \unit_Q (x) \fint_Q k_Q(x, y) (B(x) - B(y)) \v{f}(y) \, dy \quad (x \in \R^d).
\end{equation}

\end{thm}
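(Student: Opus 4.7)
The plan is to adapt the sparse decomposition for commutators developed in \cite{IPR} to yield the exact pointwise equality stated above; the result there is phrased in the language of convex-body sparse domination, but the proof in fact produces a representation of precisely the form we need. The starting point is the kernel representation of the commutator: since $T$ is a CZO with kernel $K$,
\begin{equation*}
[M_B, T\otimes \mathbf{I}_m]\v{f}(x) = \int_{\R^d} K(x,y)(B(x) - B(y))\v{f}(y)\,dy,
\end{equation*}
which is well-defined pointwise a.e.\ given that $\v{f}$ has compact support and $B\v{f} \in L^1$.

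The next step is to decompose this integral using the Nazarov--Treil--Volberg three-grid construction: there exist $3^d$ shifted dyadic grids $\D_j$ with the property that every pair $x \ne y$ in $\R^d$ is contained in some cube $Q \in \D_j$ with $\ell(Q) \sim |x-y|$. Combined with the Calder\'on--Zygmund kernel bound $|K(x,y)| \le C|x-y|^{-d}$, this yields on the set $E_{Q,j} \subset Q\times Q$ of pairs for which $Q$ is the chosen cube in grid $j$ the scale-matching estimate $|K(x,y)|\cdot|Q| \le c_{d,T}$; in particular the rescaled kernels $\tilde k_{Q,j}(x,y) := c_{d,T}^{-1} K(x,y)|Q|\,\unit_{E_{Q,j}}(x,y)$ satisfy $\|\tilde k_{Q,j}\|_{L^\infty(Q\times Q)}\le 1$, and summing over all cubes of each grid gives an exact representation of the desired form but indexed by the full grids $\D_j$. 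One then passes to sparse subfamilies $\MC{S}_j \subset \D_j$ via the iterative Calder\'on--Zygmund stopping-time argument of \cite{IPR}: starting from $Q_0 \supset \operatorname{supp}(\v{f})$ and putting $Q_0 \in \MC{S}_j$, the principal children of $R\in \MC{S}_j$ are the maximal dyadic $Q' \subsetneq R$ on which designated averages (of $|\v{f}|$, $\|B\v{f}\|$, and a local commutator-related quantity) exceed fixed multiples of those on $R$, so that $\sum|Q'|\le |R|/2$ and $\MC{S}_j$ is sparse. For each $R\in \MC{S}_j$ the function $k_R$ on $R\times R$ is built by collecting the scaled kernel contributions from all $Q\in\D_j$ with $\pi(Q) = R$, using the disjointness of the sets $E_{Q,j}$.

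\textbf{Main obstacle.} The technical heart of the argument is verifying the uniform bound $\|k_R\|_{L^\infty(R\times R)} \le 1$ after sparsification, because naively rewriting a contribution at scale $Q \subsetneq R$ as an average over $R$ introduces a ratio $|R|/|Q|$ that is uncontrolled in general. The resolution in \cite{IPR} is to calibrate the stopping rules so that, on the good set of $R$ and for pairs whose natural three-grid cube is contained in $R$ but not in any principal child, the scale $|x-y|$ is forced to be comparable to $\ell(R)$ rather than to a small descendant scale---equivalently, cubes $Q$ deep inside $R$ contribute only to principal descendants of $R$, not to $R$ itself. Once this scale-matching is in place the bound $\|k_R\|_\infty\le 1$ and the claimed equality follow; the matrix-valued nature of $B$ and $\v{f}$ causes no extra difficulty since $(B(x)-B(y))\v{f}(y)$ is simply a $\C^m$-valued integrand tested against the scalar kernel $k_R$.
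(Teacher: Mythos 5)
Your overall strategy --- regroup the kernel integral of $[M_B,T\otimes\mathbf{I}_m]$ over the three shifted grids and then sparsify --- is not the route the paper takes, and as sketched it has a gap that you identify yourself but do not actually close. The problem is the passage from the full grids $\D_j$ to the sparse families $\MC{S}_j$. In any sparse subfamily, every cube $Q\in\D_j$ lying strictly between a sparse cube $R$ and its principal children must be assigned to $R$; no calibration of the stopping rules can arrange that ``cubes deep inside $R$ contribute only to principal descendants of $R$,'' because by definition those intermediate-scale cubes are contained in no principal child. Rewriting the contribution of such a $Q$ as an average over $R$ costs the factor $|R|/|Q|$, and summing the rescaled pieces over all scales between $\ell(R)$ and the stopping scales produces a kernel behaving like $|R|\,|x-y|^{-d}$ near the diagonal of $R\times R$, which is not bounded by $1$. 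In the actual proofs of pointwise sparse representation (Theorem $3.4$ of \cite{NPTV}, and the Lerner--Ombrosi-type arguments) the function $k_R$ is \emph{not} assembled from long-range kernel pieces: it encodes the action of the truncated operator at scale $\ell(R)$, and the $L^\infty$ bound comes from the weak $(1,1)$ estimate for the grand maximal truncated operator together with stopping cubes chosen from its level sets. That machinery is the missing ingredient in your sketch. (A secondary issue: the pointwise identity $[M_B,T\otimes\mathbf{I}_m]\v{f}(x)=\int K(x,y)(B(x)-B(y))\v{f}(y)\,dy$ is justified only off the support of $\v{f}$; it cannot simply be asserted a.e.\ from $B\v{f}\in L^1$.)

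The paper avoids all of this with a short algebraic reduction: set $\tilde f=(\v{f},\v{f})^{T}$ and conjugate $T\otimes\mathbf{I}_{2m}$ by the unipotent block matrix $\Phi(x)=\begin{pmatrix}\mathbf{I}_m & B(x)\\ 0 & \mathbf{I}_m\end{pmatrix}$, so that $[M_B,T\otimes\mathbf{I}_m]\v{f}$ appears in the first block of $\Phi\,\bigl((T\otimes\mathbf{I}_{2m})\Phi^{-1}\tilde f\bigr)$. Since $\Phi^{-1}\tilde f\in L^1_c$, Theorem $3.4$ of \cite{NPTV} applied to this single $\C^{2m}$-valued function already yields the exact sparse representation with scalar kernels $\|k_Q\|_{L^\infty}\le 1$; multiplying back by $\Phi(x)$ and subtracting the representation of $(T\otimes\mathbf{I}_m)\v{f}$ isolates the commutator. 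To salvage your approach you would essentially have to reprove the NPTV representation theorem from scratch; the conjugation trick lets you quote it instead.
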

Note that this result is even new in the scalar case.  It is important to remark that even in the scalar setting, it seems unclear whether the by now standard ideas from the proof of Theorem $1.1$ in
\cite{LOR1} can be used to prove our sparse domination.  A version of our sparse domination for iterated commutators will be the subject of a future paper.

In the last section we will prove the equivalence of the quantities $\|B\|_{\BMOVUT}$ and $\|B\|_{\BMOVU}$ when $U$ and $V$ are matrix A${}_p$ weights, completing a line of work initiated in \cite{I,IKP}. Additionally we will prove that the quantities $\|B\|_{\BMOVUT}$ and $\|B\|_{\BMOVUTd}$ are equivalent again when $U$ and $V$ are matrix A${}_p$ weights. In particular we will make use of the ideas and results from \cite{I,IKP} in conjunction with an ``extrapolation of inverse H\"{o}lder inequality" argument from \cite{T}.  For the sake of completion, however,  we will reprove all relevant results from \cite{I,IKP}, the proofs of which are more technical than those in Sections \ref{UBSection} and \ref{WienerSection}.

We will end this introduction with three remarks and an outline of the organization of the rest of the paper. First, it is an obvious question as to whether the techniques and results of this paper can be extended to the iterated commutator setting, and whether we can recover the more recent iterated commutator Bloom type results from \cite{LOR2} or the very recent unweighted two symbolled iterated commutator results of \cite{HLO}. This will be pursued in a forthcoming paper.  Second, for the reader who is either unfamiliar with matrix weighted norm inequalities or is primarily interested in the implications of our results in the scalar setting, we have attempted to make this paper almost entirely self contained.

 Third, if \begin{align*} & \lambda_1 = \sup_Q \pr{\fint_Q \norm{V^\frac{1}{p} (x) (B(x) - m_Q B) \MC{U}_Q ^{-1}} ^p \, dx}^\frac{1}{p} \\
&  \lambda_2 = \sup_Q \pr{\fint_Q \norm{U^{-\frac{1}{p}} (x) (B^*(x) - m_Q B^*) (\MC{V}_Q ') ^{-1}} ^{p'} \, dx}^\frac{1}{p'} \end{align*}
   where $\MC{U}_Q$ is an $L^p$ reducing matrix for $U$ on $Q$ and $\MC{V}_Q '$ is an $L^{p'}$ reducing matrix for $V^{-\frac{1}{p}}$ on $Q$ (again, see Section \ref{UBSection}), then an easy use of H\"{o}lder's inequality (see the proof of Corollary \ref{StrongJNCont}) says that $ \lambda_1 \lesssim \|B\|_{\BMOVUT}$ and $\lambda_2 \lesssim \|B\|_{\BMOVUTd}$ for arbitrary matrix weights $U$ and $V$ (and as previously mentioned, all four quantities are equivalent for matrix A${}_p$ weights $U$ and $V$, see Corollary \ref{StrongJNCont}).   Additionally, in the purely scalar two weighted setting, we have that \begin{align*} & \lambda_1 = \sup_Q \pr{\frac{1}{u(Q)}\int_Q \abs{b(x) - m_Q b}^p v(x) \, dx}^\frac{1}{p} \\ &
 \lambda_2 = \sup_Q \pr{\frac{1}{v^{-\frac{p'}{p}}(Q)} \int_Q \abs{ b(x) - m_Q b } ^{p'} u^{-\frac{p'}{p}}(x) \, dx}^\frac{1}{p'} \end{align*}  which proves very natural arbitrary two scalar weighted necessary conditions for the boundedness of commutators with all of the Riesz transforms.

   Also, we can prove a version of Proposition \ref{OrliczProp} involving subtracted averages.  Namely,  arguing in a manner very similar to the proof of Lemma  $4$ of \cite{IPR} and the proof of Proposition \ref{OrliczProp} we have that if $C, D, E, F$ are Young function with $\bar{C},  \bar{E} \in B_{p'}$ and $\bar{D}, \bar{F} \in B_{p}$, then
\begin{equation*}
\|[M_B, T\otimes \mathbf{I}_m]\|_{L^p(U) \rightarrow L^p(V)} { \lesssim} \Lambda_{1}+\Lambda_{2}
\end{equation*}

where $\Lambda_{1}=\min\left\{ \mu_{1},\mu_{2}\right\} $
with
\[
\begin{split}\mu_{1} & =\sup_{Q}\|\|V^{\frac{1}{p}}(x)(B(x) - m_Q B) U^{-\frac{1}{p}}(y)\|_{E_{x},Q}\|_{F_{y},Q}\\
\mu_{2} & =\sup_{Q}\|\|V^{\frac{1}{p}}(x)(B(x) - m_Q B) U^{-\frac{1}{p}}(y)\|_{F_{y},Q}\|_{E_{x},Q}
\end{split}
\]
 and $\Lambda_{2} = \min\left\{ \mu_{3},\mu_{4}\right\} $
with
\[
\begin{split}\mu_{3} & =\sup_{Q} \|\|V^{\frac{1}{p}}(x) (B(y) - m_Q B) U^{-\frac{1}{p}}(y)\|_{C_{x},Q}\|_{D_{y},Q}\\
\mu_{4} & =\sup_{Q}\|\|V^{\frac{1}{p}}(x)(B(y) - m_Q B)U^{-\frac{1}{p}}(y)\|_{D_{y},Q}\|_{C_{x},Q}.
\end{split}
\]

\noindent which in the unbumped (i.e. when $C(x) = E(x) =  x^{p}/p$ and $D(x) = F(x) =x^{p'}/p'$) scalar two weighted setting gives us

 \begin{align*} & \Lambda_{1} \approx \pr{m_Q u^{-\frac{p'}{p}}} ^\frac{1}{p'} \pr{\fint_Q |b(x) - m_Q b|^p v(x)  \, dx }^\frac{1}{p} \\ &
 \Lambda_{2}  \approx \pr {m_Q v}^\frac{1}{p} \pr{\fint_Q |b(x) - m_Q b|^{p'} u^{-\frac{p'}{p}} (x) \, dx }^\frac{1}{p'}
 \end{align*} which are natural joint BMO/A${}_p$ conditions.    Further, by adding and subtracting $m_Q B$, it is trivial that in general $\kappa_1 \lesssim \mu_1 + \mu_2$ and $\kappa_2 \lesssim \mu_3 + \mu_4$ when $C =  E$ and $D = F$. Despite all this, it seems unclear what the precise connection between all of these weighted (umbumped) BMO conditions are when dealing with not necessarily matrix A${}_p$ weights (even in the one weighted fully scalar setting.)

Finally, the paper is organised as follows. In Section \ref{UBSection} we will prove Lemma \ref{IntUB},  Lemma \ref{IntLB}, Proposition \ref{BloomQuant},  Proposition \ref{OrliczProp}, and Theorem \ref{SparseLem}.  In Section \ref{WienerSection} we will prove Theorem \ref{RieszThm}, and in the last section we will prove the equivalence of the quantities $\|B\|_{\BMOVUT}$ and $\|B\|_{\BMOVU}$ when $U$ and $V$ are matrix A${}_p$ weights, which will complete the proofs of Theorem \ref{BloomUB} and Theorem \ref{BloomLB}.
%Moreover, for the reader primarily interested in the scalar setting (and do to the technical nature of our stopping time arguments), at the beginning of %Section \ref{JN} we will give a quick proof of the equivalence of the quantities $\|B\|_{\BMOVUT}$ and $\|B\|_{\BMOVU}$ in the purely scalar setting (when %$U, V$ and $B$ are scalar valued) using the classical weighted John-Nirenberg inequality from \cite{MW} and in conjunction with some results from %\cite{HLW2}.

\section{Intermediate upper and lower bounds} \label{UBSection}

In this section we will give a short proofs of Theorem \ref{SparseLem}, Lemma \ref{IntUB}, Lemma \ref{IntLB}, and Proposition \ref{OrliczProp}, starting with Lemma \ref{IntUB}.

  \subsection{Proof of Lemma \ref{IntUB}}

   Define the $2 \times 2$ block matrix function $\Phi$ by \begin{equation}  \Phi = \left(\begin{array}{cc} V^\frac{1}{p} & V^\frac{1}{p}  B \\ 0 & U^\frac{1}{p} \end{array} \right) \label{Phi} \end{equation}  so that  \begin{equation*} \Phi^{-1} =  \left(\begin{array}{cc} V^{-\frac{1}{p}} & - B U^{-\frac{1}{p}}  \\ 0 & U^{-\frac{1}{p}}. \end{array} \right) \end{equation*} and \begin{equation*} \Phi \pr{T \otimes \mathbf{I}_{2m}}  \Phi^{-1} = \Phi  \left( \begin{array}{cc} T \otimes \mathbf{I}_m & 0 \\ 0 & T \otimes \mathbf{I}_m \end{array}\right) \Phi^{-1} = \left(\begin{array}{cc} V^\frac{1}{p} \pr{T \otimes \mathbf{I}_m} V^{-\frac{1}{p}} & V^\frac{1}{p}  [M_B, T \otimes \mathbf{I}_m] U^{-\frac{1}{p}}   \\ 0 & U^{\frac{1}{p}} \pr{T \otimes \mathbf{I}_m} U^{-\frac{1}{p}} \end{array} \right). \end{equation*}

Let   $W = (\Phi ^* \Phi)^\frac{p}{2} $.  Then using the polar decomposition, we can write \begin{equation*} \Phi =  \MC{U} W^\frac{1}{p} \end{equation*} where $\MC{U}$ is unitary valued a.e.  Supposing that $W$ is a $2m \times 2m$ matrix A${}_p$ weight,  we have by assumption that

\begin{align*} \|[M_B, T \otimes \mathbf{I}_m]\|_{L^p(U) \rightarrow L^p(V)} & = \|V^\frac{1}{p}  [M_B, T \otimes \mathbf{I}_m] U^{-\frac{1}{p}} \|_{L^p \rightarrow L^p}
\\ & \leq \|\Phi \pr{T \otimes \mathbf{I}_{2m}}  \Phi^{-1}\|_{L^p \rightarrow L^p}
\\ & = \|W^\frac{1}{p} \pr{T \otimes \mathbf{I}_{2m}}  W^{-\frac{1}{p}}\|_{L^p \rightarrow L^p}
\\ & = \| T \otimes \mathbf{I}_{2m}  \|_{L^p(W) \rightarrow L^p(W)}
\\ & \leq \phi(\Ap{W}) \end{align*}

To finish the proof of Lemma \ref{IntUB}, note that \begin{align*}
\Phi(x)\Phi(y)^{-1} =
\left( \begin{array}{cc} V^{\frac{1}{p}}(x) V^{-\frac{1}{p}}(y) & V^{\frac{1}{p}}(x) ( B(x) - B(y) ) U^{-\frac{1}{p}} (y) \\ 0 & U^{\f{1}{p}}(x) U^{-\f{1}{p}} (y) \end{array}\right)
\end{align*}

so that \begin{align*}  \fint_Q & \left( \fint_Q \|W^{\frac{1}{p}} (x) W^{- \frac{1}{p}} (y) \|^{p'} \, dy \right)^\frac{p}{p'} \, dx
\\ & =  \fint_Q \left( \fint_Q \|\Phi (x) \Phi^{-1} (y) \|^{p'} \, dy \right)^\frac{p}{p'} \, dx
\\ & \leq 3^\frac{p }{p'} \pr{\Ap{U} + \Ap{V} + \fint_Q \left( \fint_Q \|V^{\frac{1}{p}}(x) ( B(x) - B(y) ) U^{-\frac{1}{p}} (y)
 \|^{p'} \, dy \right)^\frac{p}{p'} \, dx}\end{align*}
 and thus

 \begin{align*} \|[M_B, T \otimes \mathbf{I}_m]\|_{L^p(U) \rightarrow L^p(V)} \leq  \phi\pr{3^{\frac{p}{p' }}\pr{\Ap{U} + \Ap{V} + 1}} \end{align*}

Clearly we may assume that $0 < \|B\|_\BMOVUT < \infty$, so rescalling with $B \mapsto B \|B\|_\BMOVUT  ^{-1}$ completes the proof.

\subsection{Proof of Theorem \ref{SparseLem} and Proposition \ref{OrliczProp}}

\begin{proof}[Proof of Theorem \ref{SparseLem}]

  Define the $\C^{2m}$ valued function $\tilde{f}$ by $$\tilde{f}(x) = \begin{pmatrix} \v{f}(x) \\ \v{f}(x) \end{pmatrix}$$ and define the $2 \times 2$ block matrix $\Phi (x) $ by $$\Phi(x) = \begin{pmatrix} \mathbf{I}_{m } &{  B(x)}    \\ 0 & \mathbf{I}_{m }  \end{pmatrix} $$ so that $$\Phi^{-1} (x) = \begin{pmatrix}\mathbf{I}_{m } & - {  B(x)}   \\ 0 & \mathbf{I}_{m } \end{pmatrix} $$

  Direct computation shows $$\Phi(x) ((T \otimes \mathbf{I}_m) \Phi^{-1} \tilde{f}) (x) = \begin{pmatrix}   (T \otimes \mathbf{I}_m) \v{f}(x) +  [M_B, T \otimes \mathbf{I}_m] \v{f}(x) \\ (T \otimes \mathbf{I}_m)\v{f} (x) \end{pmatrix} $$    and  $$\Phi^{-1} (y) \tilde{f}(y) = \begin{pmatrix}\mathbf{I}_m & - B(y)    \\ 0 & \mathbf{I}_m \end{pmatrix} \begin{pmatrix} \v{f}(y) \\ \v{f}(y) \end{pmatrix} = \begin{pmatrix} \v{f}(y) - B(y) \v{f}(y)   \\ \v{f}(y) \end{pmatrix} $$
  Since $\Phi^{-1} \tilde{f} \in L_c ^1$ , Theorem $3.4$ in \cite{NPTV} applied to $\Phi^{-1} \tilde{f}$ then says that there exists $3^d$ sparse collections $\MC{S}_j$ of dyadic cubes, a constant $c_{d, m, T}$, and for each $Q \in \MC{S}_j$ a function $k_Q : Q \times Q \rightarrow \mathbb{R}$ with $\|k_Q\|_{L^\infty (Q \times Q)} \leq 1$  such that \begin{align*} & \begin{pmatrix} (T \otimes \mathbf{I}_m)\v{f}(x) + [M_B, T \otimes \mathbf{I}_m] \v{f}(x) \\ (T \otimes \mathbf{I}_m)\v{f} (x) \end{pmatrix}  \\ &  =  c_{d, m, T}  \sum_{j = 1}^{3^d} \sum_{Q \in \MC{S}_j} \Phi(x) \begin{pmatrix}  \fint_Q  k_{Q} (x, y) (\v{f}(y) - B(y) \v{f}(y)) \, dy  \\ \fint_Q k_{Q} (x, y)  \v{f}(y) \, dy    \end{pmatrix} \unit_Q(x)
 \\ & = c_{d, m, T} \sum_{j = 1}^{3^d} \sum_{Q \in \MC{S}_j}  \begin{pmatrix}  \fint_Q  k_{Q} (x, y) (\v{f} (y) - B(y) \v{f}(y) + B(x) \v{f}(y)) \, dy \\ \fint_Q k_{Q} (x, y)  \v{f}(y) \, dy    \end{pmatrix} \unit_Q(x). \end{align*}

   Subtracting $$ (T \otimes \mathbf{I}_m)\v{f} (x) = c_{d, n, T} \sum_{j = 1}^{3^d} \sum_{Q \in \MC{S}_j}  \fint_Q k_{Q} (x, y)  \v{f}(y) \, dy$$ from the first column then completes the proof.

\end{proof}

We now prove Proposition \ref{OrliczProp}. The easy proof is similar to the proof of Lemma $4$ in \cite{IPR}.   We only prove that \begin{equation*} \|[M_B, T \otimes \mathbf{I}_m]\|_{L^p(U) \rightarrow L^p(V)} \lesssim \sup_{Q}\|\|V^\frac{1}{p} (x) (B(x)-B(y)) U^{-\frac{1}{p}} (y) \|_{C_{x},Q}\|_{D_{y},Q} \end{equation*} as the other estimate is virtually the same.

By the density of bounded functions with compact support in $L^p(W)$ for any matrix weight $W$ (see Proposition $3.6$ in \cite{CMR}), we can pick $\v{f}, \v{g}$ bounded with compact support and use \eqref{SparseEq} to get that (where for notational ease we supress the summation over $j = 1$ to $3^d$ )

\begin{align*} & \abs{\ip{[M_B, T \otimes \mathbf{I}_m] \v{f}}{\v{g}}_{L^2}}
\\ & \leq  \sum_{Q\in\mathcal{S}}\int_Q \fint_Q \abs{\ip{(B(x) - B(y)) \v{f}(y)}{\v{g}(x)}} \, dy dx  \\ & \leq  \sum_{Q\in\mathcal{S}} \int_Q \fint_Q \norm{V^\frac{1}{p} (x) (B(x) - B(y)) U^{-\frac{1}{p}} (y)} \abs{U^\frac{1}{p} (y) \v{f}(y)} \abs{V^{-\frac{1}{p}} (x) \v{g}(x)} \, dx \,  dy
\\ & \leq 2\pr{\sup_Q \norm{\norm{V^\frac{1}{p} (x) (B(x) - B(y)) U^{-\frac{1}{p}} (y)}_{C_x, Q} }_{D_y, Q}}
\sum_{Q\in\mathcal{S}} |E_Q|  \norm{V^{-\frac{1}{p}}  \v{g}}_{\overline{C}, Q} \norm{U^\frac{1}{p}  \v{f}}_{\overline{D}, Q}  \\ &
\leq 2 \kappa_1  \|M_{\overline{D}} (U^\frac{1}{p}  \v{f})\|_{L^p}  \|M_{\overline{C}} (V^{-\frac{1}{p}}  \v{g})\|_{L^{p'}} \\ &
\lesssim \kappa_1 \|\v{f}\|_{L^p(U)}  \|\v{g}\|_{L^{p'} (V^{-\frac{p'}{p}})} \end{align*} where the last line follows from the fact that $\overline{C} \in \text{B}_{p'}$ and $\overline{D} \in \text{B}_p$. This completes the proof.
  \subsection{Proof of Lemma \ref{IntLB}}

We now give a short proof of Lemma \ref{IntLB}.   Defining $W$ and $\Phi$ as before, we have by the previous computations and by assumption that

\begin{align*}  \bigg(\Ap{U} & + \Ap{V} + \|B\|_{\BMOVUT}^p \bigg)^\frac{1}{p}
 \\ & \approx \Ap{W} ^\f{1}{p}
\\ &  \lesssim \max_{1 \leq s \leq N}  \|R_s \otimes \mathbf{I}_{2m}\|_{L^p(W) \rightarrow L^p(W)}
\\ & \leq \max_{1 \leq s  \leq N} \pr{\|[M_B, R_s \otimes \mathbf{I}_m]\|_{L^p(U) \rightarrow L^p(V)} + \|R_s \otimes \mathbf{I}_m\|_{L^p(U) \rightarrow L^p(U)} + \|R_s \otimes\mathbf{I}_m\|_{L^p(V) \rightarrow L^p(V)}}
\end{align*}

Rescalling, and in particular letting $B \mapsto r B$ for $r > 0$ gives

\begin{align*}  \bigg(\Ap{U}  &  + \Ap{V} + r^p \|B\|_{\BMOVUT}^p \bigg)^\frac{1}{p}
\\ & \lesssim \max_{1 \leq s  \leq N} \pr{r \|[M_B, R_s \otimes \mathbf{I}_m]\|_{L^p(U) \rightarrow L^p(V)} + \|T\otimes\mathbf{I}_m\|_{L^p(U) \rightarrow L^p(U)} + \|T\otimes\mathbf{I}_m\|_{L^p(V) \rightarrow L^p(V)}} \end{align*}

Finally dividing both sides by $r $ and letting $r \rightarrow \infty$ gives us that

\begin{equation*} \|B\|_{\BMOVUT} \lesssim \max_{1 \leq s  \leq N} \|[M_B, R_s \otimes \mathbf{I}_m]\|_{L^p(U) \rightarrow L^p(V)} \end{equation*}

\subsection{Proof of Proposition \ref{BloomQuant}} We finally give a very short proof of Proposition \ref{BloomQuant} by estimating $\|b\|_{\BMOVUT}$. Namely,  fix a cube $Q$. Then

\begin{align*} \fint_Q & \left( \fint_Q \|U ^\frac{1}{p} (x)  U ^{-\frac{1}{p}}  (y)\|^{p'} |b(x) - b(y)|  ^{p'} \, dy \right)^\frac{p}{p'} \, dx
\\ & \lesssim  \fint_Q  \left( \fint_Q \|U ^\frac{1}{p} (x) U ^{-\frac{1}{p}}  (y)\| |b(x) - m_Q b|   ^{p'} \, dy \right)^\frac{p}{p'} \, dx
\\ & \qquad + \fint_Q  \left( \fint_Q \|U ^\frac{1}{p} (x) U ^{-\frac{1}{p}}  (y) \| |b(y) - m_Q b|  ^{p'} \, dy \right)^\frac{p}{p'} \, dx
\\ & = (A) + (B) \end{align*}

We only estimate $(A)$ as $(B)$ can be similarly estimated.  By the classical scalar sharp reverse H\"{o}lder inequality, we can pick $\epsilon  \approx \Ainf{U}{p} ^{-1}$ where for any $\v{e} \in \C$ we have \begin{equation*} \pr{\fint_Q \abs{U ^\frac{1}{p} (x) \U_Q ' \v{e}}^{\frac{p}{1-\epsilon}}} ^{1-\epsilon} \lesssim \fint_Q \abs{U ^\frac{1}{p} (x) \U_Q ' \v{e}}^p \approx \Ap{U} \end{equation*}

and therefore

\begin{align*} (A) & \leq \left( \fint_Q \|U ^\frac{1}{p} (x) \U_Q '\|^{p'}   \| (\U_Q ')^{-1}  U ^{-\frac{1}{p}}  (y) \| ^{p'} |b(x) - m_Q b|^{p'}   \, dy \right)^\frac{p}{p'} \, dx
\\& \lesssim \fint_Q \|U ^\frac{1}{p} (x) \U_Q '   \|^p |b(x) - m_Q b|^{p} \, dx
\\ & \lesssim \pr{\fint_Q \|U ^\frac{1}{p} (x) \U_Q '   \|^\frac{p}{1-\epsilon} \, dx }^{1-\epsilon} \pr{\fint_Q |b(x) - m_Q b|^{\frac{p}{\epsilon}} \, dx} ^\epsilon
\\ & \lesssim \Ap{U} \Ainf{U}{p} ^{p} \|b\|_{\text{BMO}}^{p} \end{align*} by the classical John-Nirenberg inequality.

Similarly we can estimate \begin{equation*} (B) \lesssim \Ap{U} \Ainf{U^{-\frac{p'}{p}}}{p'} ^{p} \|b\|_{\text{BMO}}^{p} \end{equation*}

\noindent so by our assumption on $T$ we have \begin{equation*} \| [M_b, T \otimes \mathbf{I}_{m}]  \|_{L^p (U) \rightarrow L^p(U) }  \leq \phi\pr{C\Ap{U} + C\Ap{U} \pr{  \Ainf{U^{-\frac{p'}{p}}}{p'} ^{p} + \Ainf{U}{p} ^{p} }    \|b\|_{\text{BMO}}^{p}}. \end{equation*}
Rescaling, setting $b \mapsto b \br{\pr{\Ainf{U^{-\frac{p'}{p}}}{p'}  + \Ainf{U}{p}}\|b\|_{\text{BMO}}} ^{-1}$ gives \begin{equation*} \| [M_b, T \otimes \mathbf{I}_m]   \|_{L^p (U) \rightarrow L^p(U) }  \leq \phi\pr{C\Ap{U} }   \|b\|_{\text{BMO}}  \pr{\Ainf{U^{-\frac{p'}{p}}}{p'}  + \Ainf{U}{p} }.  \end{equation*}.

%We now prove \eqref{TwoWeightQuant} using \eqref{OneWeightMixedQuant}. In particular, define $\Phi$ as in \eqref{Phi} so that \begin{equation*} \Phi %([M_b, T]_{k-1} \otimes \mathbf{I}_{2m})\Phi^{-1} = \left(\begin{array}{cc} U^\frac{1}{p} ([M_b, T]_{k-1} \otimes \mathbf{I}_{m})  U^{-\frac{1}{p}} & %V^\frac{1}{p}  ([M_b, T]_{k} \otimes \mathbf{I}_{m}) U^{-\frac{1}{p}}   \\ 0 & V^{\frac{1}{p}} ([M_b, T]_{k-1} \otimes \mathbf{I}_{2m}) V^{-\frac{1}{p}} %\end{array} \right). \end{equation*} Suppose that $b \in \text{BMO}$ and again use the polar decomposition $\Phi = \MC{U}W^\frac{1}{p}$.  But since
% $\Ainf{W}{p} \lesssim \Ap{W}$ and $\Ainf{W^{-\frac{p'}{p}}}{p'} \lesssim \Ap{W}^\frac{1}{p-1}$ we have \begin{align*} \| & [M_b, T]_{k}  \otimes %\mathbf{I}_{m}\|_{L^p(U) \rightarrow L^p(V)}
%\\ & \lesssim \|[M_b, T]_{k-1} \otimes \mathbf{I}_{2m}\|_{L^p(W) \rightarrow L^p(W)}
%\\ & \lesssim \|b\|_{\text{BMO}} ^{k-1}   \Ap{W}^{(k-1) \max\{1, \frac{1}{p-1}\}} \phi\pr{C\Ap{W} }
%\\ &  \lesssim \|b\|_{\text{BMO}} ^{k-1}      \pr{\Ap{U} +  \Ap{V} + \|b\|_{\BMOVU}}  ^{(k-1) \max\{1, \frac{1}{p-1}\}} \phi\pr{C\Ap{U} + C \Ap{V} + %C\|b\|_{\BMOVU}}. \end{align*} Rescaling $b \mapsto b \|b\|_{\BMOVU}^{-1} $ now completes the proof.

\section{Lower bound for Riesz transforms } \label{WienerSection}

In this section we prove Theorem \ref{RieszThm}.  Clearly it is enough to prove Theorem \ref{RieszThm} where $\|B\|_{\BMOVUT}$ is redefined by taking the supremum over balls instead of cubes, which will be more convenient for us.    Recall that the Riesz transform $R_\ell$ is the Calder\'{o}n -Zygmund operator with associated kernel \begin{equation*} K_\ell(x, y) = \frac{x_\ell - y_\ell}{|x-y|^{d + 1} } \end{equation*}  in the usual sense.

%As in \cite{NT}, if $W$ is an $m \times m$ matrix weight, then we say that $R_j$ is bounded on $L^p(W)$ if for any $\v{f} \in L^2 \cap L^p(W)$ and $\v{g} \in L^2 \cap L^{p'}(W^{-\frac{p'}{p}})$ we have that
%\begin{equation*} |\ip{(R_j \times \mathbf{I}_m) \v{f}}{\v{g}
%}_{L^2}| \leq \|R_j \times \mathbf{I}_m\|_{L^p(W) \rightarrow L^p(W)} \|\v{f}\|_{L^p(W)} \|\v{g}\|_{L^{p'}(W^{1 - p'})}. \end{equation*} so that in

Let $W$ be an $m \times m$ matrix weight, and assume for the moment that $R_\ell \otimes \mathbf{I}_m$ is bounded on $L^p(W)$, so that in particular if $\v{f} \in L^2 \cap L^p(W)$ and $\v{g} \in L^2 \cap L^{p'}(W^{-\frac{p'}{p}})$ both have compact support with $\text{dist} (\text{supp} \v{f}, \text{supp} \v{g}) > 0$, and if $E$ is a measurable subset of $\Rd$, then \begin{align} \label{restbdd} \left|\inrd \right. & \left. \inrd \unit_{E\times E} (x ,y) K_\ell(x, y) \ip{\v{f}(y)}{\v{g}(x)}_{\Cn} \, dy \ dx \right|
\\ & \leq \|\unit_E (R_\ell \otimes \mathbf{I}_m) \unit_E\|_{L^p(W) \rightarrow L^p(W)} \|\v{f}\|_{L^p(W)} \|\v{g}\|_{L^{p'}(W^{1 - p'})} \end{align}

 As was stated in the introduction, we will need the Schur multiplier/Wiener algebra arguments in \cite{LT}, which we quickly discuss now. In particular, we trivially have that the kernel $ e^{-i a \cdot x} K_\ell (x, y) e^{i a \cdot y}$ satisfies \eqref{restbdd} for any $a \in \Rd$. Thus, if $\rho \in L^1(\Rd)$ then Fubini's theorem says that the kernel \begin{equation*} \hat{\rho} (x - y) K_\ell (x, y) = \inrd \rho(a) K_\ell(x, y) e^{- i a \cdot (x-y)}\, da  \end{equation*} satisfies \eqref{restbdd} with $\|\unit_E (R_\ell \otimes \mathbf{I}_n) \unit_E\|_{L^p(W) \rightarrow L^p(W)}$ replaced with $\|\unit_E (R_\ell \otimes \mathbf{I}_n) \unit_E\|_{L^p(W) \rightarrow L^p(W)} \|\rho\|_{L^1(\Rd)}$ (where here $\hat{\rho}(s) = \inrd \rho(a) e^{-i s \cdot a} \, da$.)

Let $W_0(\Rd)$ denote the Wiener algebra defined by $W_0 (\Rd) = \{\psi = \hat{\rho} : \rho \in L^1(\Rd)\}$.  Then since $\hat{\rho} (\cdot /\varepsilon) = \widehat{\varepsilon^d \rho (\varepsilon \cdot)}$ and \begin{equation*} \inrd |\varepsilon ^d \rho(\epsilon x)| \, dx = \|\rho\|_{L^1(\Rd)} \end{equation*} we have the following result which is similar to Lemma $2.1$ in \cite{LT}.

\begin{lm} \label{LTLem} If $\psi \in W_0 (\mathbb{Rd}), \varepsilon > 0$, and $E$ is a measurable subset of $\Rd$ then \begin{align} \label{restSchurbdd} \left|\inrd \right. & \left.\inrd \psi \left(\frac{x - y}{\varepsilon} \right) \unit_{E \times E} (x,y) K_\ell(x, y) \ip{\vec{f}(y)}{\vec{g}(x)}_{\Cn} \, dy \ dx \right| \\ & \leq \|\unit_E (R_\ell \otimes \mathbf{I}_m) \unit_E\|_{L^p(W) \rightarrow L^p(W)}  \|\v{f}\|_{L^p(W)} \|\v{g}\|_{L^{p'}(W^{1 - p'})} \end{align} for any $\v{f} \in L^2 \cap L^p(W)$ and $\v{g} \in L^2 \cap L^{p'}(W^{-\frac{p'}{p}})$  of compact support with $\text{dist} (\text{supp} \ \v{f}, \text{supp} \ \v{g}) > 0$. \end{lm}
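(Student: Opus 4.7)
The plan is to derive Lemma \ref{LTLem} from the already-established restricted boundedness \eqref{restbdd} by two standard Wiener-algebra manipulations (modulation and integration against an $L^1$ density) followed by a simple rescaling. Throughout, the compact disjoint-supports hypothesis on $\v{f}$ and $\v{g}$ makes $K_\ell$ uniformly bounded on $\mathrm{supp}\,\v{g}\times\mathrm{supp}\,\v{f}$, which is what legitimises every interchange of integration below and is the sole role played by that hypothesis.

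First, for any frequency $a\in\R^d$, multiplication $\v{f}(y)\mapsto e^{ia\cdot y}\v{f}(y)$ is an isometry of $L^p(W)$ (since the matrix weight is insensitive to pointwise unimodular factors: $|e^{ia\cdot y}\v{f}(y)|_{\C^m}=|\v{f}(y)|_{\C^m}$), and similarly $\v{g}(x)\mapsto e^{ia\cdot x}\v{g}(x)$ is an isometry of $L^{p'}(W^{1-p'})$. Substituting the modulated functions into \eqref{restbdd} and absorbing the phases into the kernel shows that $e^{-ia\cdot x}K_\ell(x,y)e^{ia\cdot y}$ satisfies \eqref{restbdd} with the same right-hand side, \emph{uniformly} in $a\in\R^d$. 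Writing $\psi=\hat{\rho}$ with $\rho\in L^1(\R^d)$, one then has the pointwise identity
\[
\psi(x-y)\,K_\ell(x,y)\;=\;\int_{\R^d}\rho(a)\,e^{-ia\cdot(x-y)}K_\ell(x,y)\,da,
\]
and Fubini (justified by the disjoint-supports hypothesis together with $\rho\in L^1$ and $\v{f},\v{g}\in L^2$ with compact support) allows one to integrate the modulated inequality against $|\rho(a)|\,da$. This yields the inequality for $\hat{\rho}(x-y)K_\ell(x,y)$ with an extra multiplicative factor $\|\rho\|_{L^1(\R^d)}=\|\psi\|_{W_0(\R^d)}$.

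Finally, the elementary Fourier identity $\hat{\rho}(\cdot/\varepsilon)=\widehat{\varepsilon^d\rho(\varepsilon\,\cdot)}$ together with $\|\varepsilon^d\rho(\varepsilon\,\cdot)\|_{L^1(\R^d)}=\|\rho\|_{L^1(\R^d)}$ shows that the dilation step is free: $\psi(\cdot/\varepsilon)\in W_0(\R^d)$ with exactly the same Wiener norm as $\psi$. Applying the second step to $\psi(\cdot/\varepsilon)$ in place of $\psi$ then delivers \eqref{restSchurbdd} (with an implicit multiplicative constant $\|\psi\|_{W_0(\R^d)}$ on the right-hand side, which one may normalise to $1$). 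There is no substantive obstacle: the only technical point is the Fubini interchange, and that reduces to the compact disjoint-support hypothesis. The genuine analytic content of the lemma sits entirely in \eqref{restbdd}; the Wiener-algebra machinery simply transports it to the much larger family of kernels $\psi((x-y)/\varepsilon)K_\ell(x,y)$, at no cost beyond the Wiener norm of $\psi$.
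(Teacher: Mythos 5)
Your proof is correct and follows essentially the same route as the paper: modulation invariance of the weighted norms gives \eqref{restbdd} for $e^{-ia\cdot x}K_\ell(x,y)e^{ia\cdot y}$ uniformly in $a$, Fubini (justified by the disjoint compact supports) transfers this to $\hat{\rho}(x-y)K_\ell(x,y)$ at the cost of $\|\rho\|_{L^1}$, and the dilation identity $\hat{\rho}(\cdot/\varepsilon)=\widehat{\varepsilon^d\rho(\varepsilon\,\cdot)}$ makes the $\varepsilon$-rescaling free. Your explicit remark that the stated inequality carries an implicit factor $\|\rho\|_{L^1(\Rd)}$ is consistent with (and slightly more careful than) the paper's formulation.
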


We will need three more lemmas to show that the Riesz transforms satisfy \eqref{LBO}, the first of which is probably known (though we provide a proof for the sake of completion) and the second being from \cite{LT}.

\begin{lm} \label{SchurMultLem} If $\phi \in C_c ^\infty (\Rd)$ then $|x| \phi(x) \in W_0(\Rd)$. \end{lm}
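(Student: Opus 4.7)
The plan is to show that $\widehat{|x|\phi} \in L^1(\R^d)$. Assuming this, Fourier inversion (applicable since $|x|\phi$ is continuous and compactly supported) gives $|x|\phi = \widehat{\rho}$ where $\rho(a) := (2\pi)^{-d}\widehat{|x|\phi}(-a) \in L^1(\R^d)$, so $|x|\phi \in W_0(\R^d)$. The main analytic tool is the Cauchy--Schwarz estimate
\[
\int_{\R^d}|\widehat{f}(\xi)|\,d\xi \leq \|f\|_{H^s}\left(\int_{\R^d}(1+|\xi|^2)^{-s}\,d\xi\right)^{1/2},
\]
whose right-hand side is finite exactly when $s > d/2$. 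It therefore suffices to prove $f := |x|\phi \in H^s(\R^d)$ for some $s > d/2$.

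The function $f$ is Lipschitz with compact support and $C^\infty$ away from the origin; the only singular point is $x = 0$. Since $|x|$ is positively $1$-homogeneous and $C^\infty$ off the origin, the Leibniz rule gives the pointwise bound $|D^\alpha f(x)| \leq C_\alpha |x|^{1-|\alpha|}$ near $0$ for $|\alpha| \geq 1$. Since $|x|^{2(1-k)}$ is locally integrable on $\R^d$ iff $2(k-1) < d$, we conclude $D^\alpha f \in L^2(\R^d)$ for every multi-index $\alpha$ with $|\alpha| < 1 + d/2$, so $f \in H^k(\R^d)$ for every integer $k$ satisfying $k < 1 + d/2$. In odd dimensions we may take $k = (d+1)/2 > d/2$, and the Cauchy--Schwarz estimate above applies immediately.

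In even dimensions $d = 2m$, integer Sobolev regularity only yields $f \in H^m$, which saturates the threshold $s = d/2 = m$. Gaining a small fractional amount of regularity past $H^m$ is the main (modest) obstacle. It can be achieved either by real interpolation between $f \in H^m$ and a Lipschitz-type bound on $D^m f$ producing Besov regularity, or, more concretely, by a direct pointwise estimate of the form $|\widehat{f}(\xi)| \lesssim (1+|\xi|)^{-(d+1)}\log(2+|\xi|)$ obtained via dyadic integration by parts around $x = 0$: the spatial integral is split at scale $|x|\sim|\xi|^{-1}$ into a near-origin piece bounded crudely by the size of $f$, and far-from-origin annuli on which one integrates by parts $d+1$ times using that $|D^\alpha f|\lesssim |x|^{1-|\alpha|}$. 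This latter estimate is integrable at infinity and bypasses fractional Sobolev spaces altogether, giving $\widehat{f} \in L^1(\R^d)$ directly. With either route, the conclusion $|x|\phi \in W_0(\R^d)$ follows.
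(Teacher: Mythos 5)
Your argument is sound but follows a genuinely different route from the paper's. The paper makes one uniform estimate in all dimensions: it splits $\|\hat F\|_{L^1}$ over the regions $P_\alpha$ indexed by $\alpha\in\{0,1\}^d$, applies H\"older with a carefully chosen exponent $1<\delta<\min\{1+\tfrac{1}{d-1},2\}$ against the integrable weight $|x^\alpha|^{-\delta}$, and then uses Hausdorff--Young to reduce matters to $D^\alpha F\in L^\delta$ for the \emph{mixed first-order} derivatives $D^\alpha$, $\alpha\in\{0,1\}^d$; the same Leibniz bound $|D^\alpha F(x)|\lesssim|x|^{1-|\alpha|}$ you use then closes the argument because $\delta<1+\tfrac{1}{d-1}$. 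Your $L^2$-based route (Cauchy--Schwarz against $(1+|\xi|^2)^{-s}$, reducing to $f\in H^s$ for some $s>d/2$) is cleaner and fully complete in odd dimensions, but it saturates the endpoint $s=d/2$ exactly when $d$ is even --- and note that the even case is the \emph{only} case in which the paper ever invokes this lemma (the proof of Lemma \ref{AveLem} explicitly restricts to even $d$, the odd case being handled without it). Your two proposed repairs are both viable: the dyadic integration-by-parts computation does yield $|\hat f(\xi)|\lesssim(1+|\xi|)^{-(d+1)}\log(2+|\xi|)$ (each annulus $|x|\sim 2^{-j}\gtrsim|\xi|^{-1}$ contributes $\approx|\xi|^{-(d+1)}$ after $d+1$ integrations by parts, and there are $O(\log|\xi|)$ of them), and alternatively $D^m f\sim|x|^{1-m}$ lies in $H^{\sigma}$ for every $\sigma<1$, giving $f\in H^{m+\sigma}$ with $m+\sigma>d/2$. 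But as written these are sketches, not proofs, so the case your argument actually needs for this paper is the one left least developed; you should carry out one of the two repairs in full (the dyadic one requires only a smooth dyadic partition of unity near the origin, since the sharp cutoff at $|x|\sim|\xi|^{-1}$ would otherwise generate boundary terms). The paper's H\"older/Hausdorff--Young device, by using only derivatives of order at most one in each variable, sidesteps the even/odd dichotomy entirely, which is what it buys over your approach; yours, in exchange, is more transparent and quantifies the decay of $\hat f$.
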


\begin{proof} The proof is similar the short proof of Lemma $2$ in \cite{DT}. Let $F(x) = |x| \phi(x)$ and pick $1 < \delta < \min\{1 + \frac{1}{d-1}, 2\}$. If $\alpha \in \{0, 1\}^d$ and \begin{equation*} P_\alpha =  \{x \in \Rd : (-1)^{\alpha_j} |x_{\alpha_j}| \leq (-1)^{\alpha_j}\} \end{equation*} then \begin{align*} \|\hat{F}\|_{L^1(\Rd)} & = \sum_{\alpha \in \{0, 1\}^d} \int_{P_\alpha} |x^\alpha|^{-1} \left( |x^\alpha| |\hat{F} (x)| \right) \, dx \\ & \leq \sum_{\alpha \in \{0, 1\}^d} \left(\int_{P_\alpha} |x^\alpha|^{-\delta} \, dx \right)^\frac{1}{\delta} \left(\inrd |x^\alpha \hat{F} (x)|^{\delta'} \, dx \right)^\frac{1}{\delta'} \\ & \lesssim \left(\inrd |\widehat{ D^\alpha F} (x)|^{\delta'} \, dx \right)^\frac{1}{\delta'} \\ & \lesssim
\left(\inrd |{ D^\alpha F} (x)|^{\delta} \, dx \right)^\frac{1}{\delta} \end{align*} where in the last inequality we used the classical Hausdorff-Young inequality.  However,  an elementary estimate using the Leibniz formula tells us that \begin{equation*} |D^\alpha F(x)| \lesssim |x|^{1 - |\alpha|}.   \end{equation*} Thus, the fact that $1 < \delta < 1 + \frac{1}{d-1}$ gives us that $\|\hat{F}\|_{L^1(\Rd)} < \infty$, which by Fourier inversion completes the proof. \end{proof}

\begin{lm}[Lemma $3.1$, \cite{LT}] There exists Borel sets $E_k ^1$ and $E_k ^2$ such that \begin{list}{}{}
\item $(i) \ $ For all $k \in \N$ we have $\text{dist} (E_k ^1, E_k ^2) > 0$.
\item $(ii) \ $ The operators defined by $P_k ^\ell \v{f} = \unit_{E_k ^\ell} \v{f}$ for $\ell = 1, 2$ converge to $\frac12 \text{Id}$ in the $L^2(\Rd)$ weak operator topology.
\item $(iii) \ $ For any $1 \leq p < \infty$ and $\ell = 1, 2$ we have \begin{equation*} \lim_{k \rightarrow \infty}   \|\unit_{E_{k}^\ell} \v{f} \|_{L^p(\Rd)} = 2^{-\frac{1}{p}} \|\v{f}\|_{L^p(\Rd)} \end{equation*}
    \end{list} \end{lm}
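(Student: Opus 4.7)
The plan is to realise the $E_k^\ell$ as a shrunken checkerboard. For each $k \in \N$, partition $\Rd$ into the half-open cubes $Q_j^k = \prod_{i=1}^d [j_i/k,(j_i+1)/k)$ with $j \in \Z^d$, and call $Q_j^k$ \emph{red} if $\sum_i j_i$ is even and \emph{blue} otherwise. Inside each $Q_j^k$ let $Q_j^{k,\ast}$ denote the concentric sub-cube of side $(1-1/k)/k$, and define
\begin{equation*}
E_k^1 = \bigcup_{\substack{j \in \Z^d \\ \sum_i j_i \text{ even}}} Q_j^{k,\ast}, \qquad E_k^2 = \bigcup_{\substack{j \in \Z^d \\ \sum_i j_i \text{ odd}}} Q_j^{k,\ast}.
\end{equation*}
By construction, any two adjacent cubes have their shrunken cores separated by at least $1/k^2$ along the perpendicular coordinate, while cubes of the same colour differ by at least $1/k$ in some coordinate; thus $\mathrm{dist}(E_k^1, E_k^2) \geq 1/k^2 > 0$, proving (i).

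The next step is to reduce both (ii) and (iii) to the single claim that $\unit_{E_k^\ell} \to \tfrac12$ in the weak-$\ast$ topology of $L^\infty(\Rd)$, i.e.\ that for every $g \in L^1(\Rd)$,
\begin{equation*}
\lim_{k \to \infty} \int_{\Rd} \unit_{E_k^\ell}(x)\, g(x)\, dx = \tfrac12 \int_{\Rd} g(x)\, dx.
\end{equation*}
Property (ii) then follows by taking $g = f\bar h$ for $f, h \in L^2(\Rd)$ (note $f\bar h \in L^1$), while property (iii) follows by taking $g = |f|^p \in L^1(\Rd)$, since $\|\unit_{E_k^\ell} f\|_{L^p}^p = \int \unit_{E_k^\ell} |f|^p$.

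To prove the weak-$\ast$ statement, the functionals $g \mapsto \int \unit_{E_k^\ell} g$ are uniformly bounded on $L^1(\Rd)$ with norm at most $1$, so a standard $3\varepsilon$ argument reduces verification to the dense subspace $C_c(\Rd)$. For $g \in C_c(\Rd)$ supported in a ball $B$, fix $\varepsilon > 0$ and choose $k$ large enough that the oscillation of $g$ over any set of diameter $\sqrt{d}/k$ is below $\varepsilon$. Pair each red cube with an adjacent blue cube; on each such pair $P$ of total volume $2/k^d$, the shrunken portion inside $E_k^\ell$ has volume $(1-1/k)^d/k^d$, so using $|g(x)-g(y)| < \varepsilon$ on $P$ a direct Riemann-sum estimate gives
\begin{equation*}
\Bigl| \int_P \unit_{E_k^\ell} g - \tfrac12 \int_P g \Bigr| \lesssim \frac{\|g\|_\infty}{k^{d+1}} + \frac{\varepsilon}{k^d}.
\end{equation*}
Summing over the $O(k^d |B|)$ pairs that meet $B$ and sending $\varepsilon \to 0$ (then $k \to \infty$) proves the claim.

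The only delicate issue is reconciling the positive-distance requirement in (i) with the exact-$\tfrac12$ density required in (ii)--(iii): the buffer construction introduces a multiplicative density defect $(1-1/k)^d$ per cube, but this factor converges to $1$ as $k \to \infty$, so the asymptotic density of each $E_k^\ell$ remains exactly $\tfrac12$ and no adjustment of the construction is needed. I expect this balancing of the buffer scale against the coloring scale to be the only step requiring any care; all other steps are routine density and Riemann-sum arguments.
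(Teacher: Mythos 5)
Your proof is correct. Note that the paper itself offers no argument for this lemma --- it is quoted verbatim from Liaw--Treil \cite{LT} --- so there is no internal proof to compare against; your write-up is a legitimate self-contained substitute. The construction in \cite{LT} is of the same genre (alternating sets of asymptotic density $\tfrac12$ separated by thin buffers, with the density defect from the buffers vanishing as $k \to \infty$), so you have independently rediscovered essentially the intended mechanism; your checkerboard of shrunken cubes and the reduction of both (ii) and (iii) to weak-$*$ convergence of $\unit_{E_k^\ell}$ to $\tfrac12$ in $L^\infty = (L^1)^*$ (using that every $L^1$ function factors as a product of two $L^2$ functions, and that $\|\unit_{E_k^\ell}\v{f}\|_{L^p}^p = \int \unit_{E_k^\ell}|\v{f}|^p$) is clean and complete. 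Two cosmetic points: the pairing of red with blue cubes should be made explicit (e.g.\ pair $Q_j^k$ with $Q_{j+e_1}^k$ for $j_1$ even, a domino tiling of $\Z^d$), and the order of limits at the end is stated backwards --- for fixed $\varepsilon$ one lets $k \to \infty$ to get $\limsup_k$ of the error $\lesssim \varepsilon|B|$, and only then sends $\varepsilon \to 0$. Neither affects the validity of the argument.
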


We now need to introduce the concept of a reducing matrix.  Namely, for a set $Q$ of finite nonzero measure,  let $\U_Q, \V_Q, \U_Q ', \V_Q ' $ respectively be positive definite matrices where for any $\vec{e} \in \Cn$ we have \begin{equation} | \U_Q \vec{e} |^p \approx \fint_Q | U^\frac{1}{p} (x) \vec{e} |^p \, dx, \ \ \  | \U_Q '  \vec{e} |^{p'} \approx \fint_Q |U^{-\frac{1}{p}} (x) \vec{e} |^{p'} \, dx \label{ReducingOp} \end{equation} and a similar statement holds for $\V_Q$ and $\V_Q '$ with respect to $V$.  Despite its perhaps abstract appearance, the reader should think of $\U_Q$ as ``the $L^p$ average of $U^\frac{1}{p}$ over $Q$" and should similarly  of $\U_Q'$ as an average. In fact, if $U$ is a matrix A${}_p$ weight then it can be shown (see Lemma $2.2$ in \cite{IKP}) that for any $\v{e} \in \Cn$ \begin{equation} \label{RedOpAveEq} |\U_Q  \v{e}| \approx | m_Q (U^{\frac{1}{p}}) \v{e}| \end{equation}  and a similar result holds for $\U_Q'$ since $U$ is a matrix A${}_p$ weight if and only if $W^{-\frac{p'}{p}}$ is a matrix A${}_{p'}$ weight.  Also note that we can in fact rewrite the matrix A${}_p$ condition as \begin{equation*} \Ap{U} \approx \sup_{\substack{ Q \subseteq \Rd \\ Q \text{ is a cube }} } \|\U_Q \U_Q '\|^p \end{equation*} and thus since $\|\U_Q \U_Q '\| = \|\U_Q '  \U_Q \|$, we can also write the matrix A${}_p$ condition out as \begin{equation} \label{MatrixApDualDef}\Ap{W} ^\frac{p'}{p} =  \sup_{\substack{Q \subset \R^d \\ Q \text{ is a cube}}} \fint_Q \left( \fint_Q \|W^{\frac{1}{p}} (x) W^{- \frac{1}{p}} (y) \|^{p} \, dx \right)^\frac{p'}{p} \, dy  < \infty. \end{equation}

 Furthermore, note that we can rewrite the $\BMOVU$ condition much more naturally as

\begin{equation*} \|B\|_{\BMOVU} = \sup_{\substack{Q \subseteq \Rd \\ Q \text{is a cube}} }  \pr{\fint_Q \|\V_Q (B(x) - m_Q B)\U_Q ^{-1}  \| \, dx}^\f{1}{p}  < \infty \end{equation*}

  The next proposition is implicitly proved in \cite{G} (among other papers) but is not explicitly stated in the literature.

   \begin{prop} \label{AveProp} If $E \subseteq \Rd$ is a set of finite positive measure then for any matrix weight $W$ we have  \begin{equation*} \|A_E\|_{L^p(W) \rightarrow L^p(W)} \approx \norm{\MC{W}_E ' \MC{W}_E } \end{equation*} where $A_E$ is the averaging operator \begin{equation*} A_E \v{f} = \unit_E \fint_E \v{f} (x) \, dx \end{equation*} \noindent and $\MC{W}_E$ and $\MC{W}_E '$ are reducing matrices for $W$. \end{prop}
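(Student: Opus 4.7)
My plan is to reduce the estimate to the defining properties of the reducing matrices together with a sharp H\"{o}lder/duality argument on the scalar quantity $|\MC{W}_E \v{e}|$ where $\v{e} = \fint_E \v{f}$. Observe first that $A_E \v{f} = \unit_E \v{e}$, so by the defining property \eqref{ReducingOp} of the $L^p$-reducing matrix $\MC{W}_E$ for $W$ on $E$,
\[
\|A_E \v{f}\|_{L^p(W)}^{p} = \int_E |W^{\frac{1}{p}}(x)\v{e}|^p\,dx \approx |E|\,|\MC{W}_E \v{e}|^p.
\]
Thus $\|A_E\|_{L^p(W)\to L^p(W)}$ is comparable to the supremum of $|E|^{\frac{1}{p}}|\MC{W}_E\v{e}|/\|\v{f}\|_{L^p(W)}$ over all nonzero $\v{f}\in L^p(W)$, and the whole task reduces to proving that this supremum is equivalent to $\|\MC{W}_E' \MC{W}_E\|$.

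For the upper bound, I use that $\MC{W}_E$ is positive self-adjoint to write $|\MC{W}_E \v{e}| = \sup_{|\v{u}|=1} \re\,\ip{\v{e}}{\MC{W}_E\v{u}}$ and insert the factorization $W^{-\frac{1}{p}}(y)W^{\frac{1}{p}}(y) = \I$ inside:
\[
\ip{\v{e}}{\MC{W}_E \v{u}} = \fint_E \ip{W^{\frac{1}{p}}(y)\v{f}(y)}{W^{-\frac{1}{p}}(y)\MC{W}_E\v{u}}\,dy.
\]
H\"{o}lder's inequality bounds this by $|E|^{-\frac{1}{p}}\|\unit_E\v{f}\|_{L^p(W)}\cdot\pr{\fint_E|W^{-\frac{1}{p}}(y)\MC{W}_E\v{u}|^{p'}\,dy}^{\frac{1}{p'}}$, and the $L^{p'}$-reducing-matrix property applied to $W^{-\frac{1}{p}}$ converts the second factor into $\approx |\MC{W}_E'\MC{W}_E\v{u}| \le \|\MC{W}_E'\MC{W}_E\|$. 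Combining with the identity from the previous paragraph and cancelling the $|E|^{\frac{1}{p}}$ factors produces $\|A_E\v{f}\|_{L^p(W)}\lesssim\|\MC{W}_E'\MC{W}_E\|\,\|\v{f}\|_{L^p(W)}$.

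For the matching lower bound, I would plug in the H\"{o}lder extremizer. Pick a unit $\v{u}$ nearly attaining $\|\MC{W}_E'\MC{W}_E\|$, set $\v{h}(y) = W^{-\frac{1}{p}}(y)\MC{W}_E\v{u}$, and take
\[
\v{f}(y) = \unit_E(y)\,W^{-\frac{1}{p}}(y)\,|\v{h}(y)|^{p'-2}\,\v{h}(y).
\]
A direct computation using $(p'-1)p=p'$ yields $\|\v{f}\|_{L^p(W)}^{p}=\int_E|\v{h}|^{p'}\,dy$, while $\ip{\v{e}}{\MC{W}_E\v{u}} = \fint_E|\v{h}|^{p'}\,dy$, which is automatically real and positive. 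Since $|\MC{W}_E\v{e}|\ge\re\,\ip{\v{e}}{\MC{W}_E\v{u}}$, the reducing-matrix identity for $\|A_E\v{f}\|_{L^p(W)}$ then gives
\[
\frac{\|A_E\v{f}\|_{L^p(W)}}{\|\v{f}\|_{L^p(W)}} \gtrsim \pr{\fint_E|\v{h}|^{p'}\,dy}^{\frac{1}{p'}} \approx |\MC{W}_E'\MC{W}_E\v{u}| \approx \|\MC{W}_E'\MC{W}_E\|,
\]
which is the desired lower bound.

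There is no substantive obstacle: the whole argument is ordinary scalar $L^p$--$L^{p'}$ duality carried out once the reducing matrices have replaced the weighted norms by plain $L^p$ averages of matrix--vector expressions. The only bookkeeping to watch is that the implicit constants from \eqref{ReducingOp} stay tracked separately from those appearing in H\"{o}lder's inequality, and that over $\C$ one reads $|\v{h}|^{p'-2}\v{h}$ consistently so that $\ip{|\v{h}|^{p'-2}\v{h}}{\v{h}}=|\v{h}|^{p'}$ really is real and positive.
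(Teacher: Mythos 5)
Your proof is correct and rests on the same $L^p$--$L^{p'}$ duality with reducing matrices that the paper uses; the paper packages it abstractly via the dual norm $\rho_{p,E}^*$, the identification $(L^p(W))^*=L^{p'}(W^{-p'/p})$, and a swap of suprema, whereas you unpack the two directions explicitly (H\"older for the upper bound, the extremizing test function $\v{f}=\unit_E W^{-1/p}|\v{h}|^{p'-2}\v{h}$ for the lower bound). Both of your computations check out, including the exponent bookkeeping $(p'-1)p=p'$ and the identification of $\bigl(\fint_E|W^{-1/p}(y)\MC{W}_E\v{u}|^{p'}\,dy\bigr)^{1/p'}$ with $|\MC{W}_E'\MC{W}_E\v{u}|$.
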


   \begin{proof}  Let $\rho_{p, E} $ be the norm on $\Cn$ defined by \begin{equation*} \rho_{p, E} (\v{e}) = \pr{\fint_E \abs{W^\frac{1}{p} (x) \v{e}}^p \, dx }^\frac{1}{p} \approx |\MC{W}_E \v{e}| . \end{equation*} A standard duality argument immediately says that \begin{equation*} \rho_{p, E} (\v{e}) = \sup_{\v{v} \neq 0} \frac{\ip{\v{e}}{\v{v}}_{\Cn}}{\rho_{p, E} ^* (\v{v})} \end{equation*} where \begin{equation*} \rho_{p, E} ^* (\v{e}) = \sup_{\v{v} \neq 0} \frac{\ip{\v{e}}{\v{v}}_{\Cn}}{\rho_{p, E}  (\v{v})} \approx \abs{\MC{W}_E ^{-1} \v{e}}. \end{equation*}   Using these facts in conjunction with the fact that $(L^p(W))^* = L^{p'} (W^{-\frac{p'}{p}})$ under the unweighted $L^2$ inner product, we get that

   \begin{align*}  \sup_{\|\v{f}\|_{L^p(W)} = 1} \norm{\unit_E \fint_E \v{f}(x) \, dx}_{L^p(W)}
   & = |E|^{-\frac{1}{p'}} \sup_{\|\v{f}\|_{L^p(W)} = 1}  \sup_{\v{e} \neq 0}  \frac{\int_E \ip{\v{f}(x)}{\v{e}}_{\Cn}}{\rho_{p, E} ^* (\v{v})}
   \\ & = |E|^{-\frac{1}{p'}}\sup_{\v{e} \neq 0} \frac{\norm{\unit_E \v{e}}_{L^{p'}(W^{-\frac{p'}{p}})}}{\rho_{p, E} ^* (\v{v})}
   \approx \sup_{\v{e} \neq 0} \frac{\norm{\MC{W}_E '\v{e}}}{\norm{\MC{W}_E ^{-1} \v{e}}}. \end{align*}  Replacing $\v{e}$ by $\MC{W}_E \v{e} $ completes the proof.
   \end{proof}

   Putting together everything in this section gives us the following crucial Lemma

   \begin{lm} \label{AveLem} Let $\mathscr{B}$ be a ball and  $E \subseteq \mathscr{B}$ have nonzero finite measure.  Then   \begin{equation*} \|\MC{W}_{E}' \MC{W}_{E}\| \lesssim\frac{|\mathscr{B}|}{|{E}|} \max_{1 \leq \ell \leq d}  \|\unit_{E}(R_\ell \otimes \mathbf{I}_m) \unit_{E} \|_{L^p(W) \rightarrow L^p(W)}\end{equation*}
\end{lm}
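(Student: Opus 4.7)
The plan is to combine Proposition \ref{AveProp} with the Wiener algebra machinery of Lemma \ref{LTLem} and the disjoint-support splitting lemma (Lemma 3.1 of \cite{LT}) stated above. By Proposition \ref{AveProp} it suffices to establish
\begin{equation*}
\|A_E\|_{L^p(W)\to L^p(W)}\lesssim \frac{|\mathscr{B}|}{|E|}\max_{1\le \ell\le d}\|\unit_E(R_\ell\otimes\mathbf{I}_m)\unit_E\|_{L^p(W)\to L^p(W)},
\end{equation*}
and by duality this reduces to bounding the bilinear form $|\langle A_E\v{f},\v{g}\rangle|=\tfrac{1}{|E|}\big|\iint_{E\times E}\langle \v{f}(y),\v{g}(x)\rangle\,dy\,dx\big|$ for $\v{f},\v{g}$ bounded with compact support in $E$, against $\|\v{f}\|_{L^p(W)}\|\v{g}\|_{L^{p'}(W^{1-p'})}$.

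The key step is to manufacture, from the Riesz kernels $K_\ell$, a kernel that is essentially constant $\approx 1/|\mathscr{B}|$ on $\mathscr{B}\times\mathscr{B}$. I would pick $\phi\in C_c^\infty(\Rd)$ radial with $\phi\ge 0$, $\phi\equiv 1$ on $B(0,1)$ and $\text{supp}\,\phi\subseteq B(0,2)$, and set $\psi_\ell(z):=z_\ell|z|^{d-1}\phi(z)$. The elementary identity $\sum_\ell z_\ell^2/|z|^2=1$ then yields
\begin{equation*}
\sum_{\ell=1}^d\psi_\ell\Big(\tfrac{x-y}{\varepsilon}\Big)K_\ell(x,y)=\frac{1}{\varepsilon^d}\phi\Big(\tfrac{x-y}{\varepsilon}\Big).
\end{equation*}
I would verify $\psi_\ell\in W_0(\Rd)$ in two cases: if $d$ is odd then $|z|^{d-1}$ is a polynomial, so $\psi_\ell\in C_c^\infty(\Rd)\subseteq W_0(\Rd)$; if $d$ is even, factor $\psi_\ell(z)=\bigl(z_\ell|z|^{d-2}\phi(z)\bigr)\cdot\bigl(|z|\widetilde\phi(z)\bigr)$ with $\widetilde\phi\in C_c^\infty$ equal to $1$ on $\text{supp}\,\phi$, and apply Lemma \ref{SchurMultLem} to the second factor together with the fact that $W_0$ is an algebra under pointwise multiplication (since convolution preserves $L^1$). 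Choosing $\varepsilon:=2r_{\mathscr{B}}$ (so that $|x-y|/\varepsilon\le 1$ whenever $x,y\in\mathscr{B}$) makes the reproducing kernel equal to $\varepsilon^{-d}\approx 1/|\mathscr{B}|$ throughout $E\times E\subseteq\mathscr{B}\times\mathscr{B}$.

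The remaining obstacle is the disjoint-support hypothesis in Lemma \ref{LTLem}. I would resolve this via the Lacey--Treil sets: set $\v{f}_k:=\unit_{E_k^1}\v{f}$ and $\v{g}_k:=\unit_{E_k^2}\v{g}$, so that $\text{dist}(\text{supp}\,\v{f}_k,\text{supp}\,\v{g}_k)>0$. Summing Lemma \ref{LTLem} over $\ell$, substituting the reproducing identity, and using $\iint_{E\times E}\langle\v{f}_k(y),\v{g}_k(x)\rangle\,dy\,dx=|E|\langle A_E\v{f}_k,\v{g}_k\rangle_{L^2}$ gives
\begin{equation*}
\frac{|E|}{|\mathscr{B}|}\,|\langle A_E\v{f}_k,\v{g}_k\rangle|\lesssim \max_{1\le \ell\le d}\|\unit_E(R_\ell\otimes\mathbf{I}_m)\unit_E\|\,\|\v{f}_k\|_{L^p(W)}\|\v{g}_k\|_{L^{p'}(W^{1-p'})}.
\end{equation*}
Letting $k\to\infty$, property (ii) of the splitting lemma yields $\int_{E\cap E_k^1}\v{f}\to\tfrac{1}{2}\int_E\v{f}$ and similarly for $\v{g}$, so $\langle A_E\v{f}_k,\v{g}_k\rangle\to\tfrac{1}{4}\langle A_E\v{f},\v{g}\rangle$; property (iii) applied to the scalar functions $|W^{1/p}\v{f}|\in L^p$ and $|W^{-1/p}\v{g}|\in L^{p'}$ gives $\|\v{f}_k\|_{L^p(W)}\to 2^{-1/p}\|\v{f}\|_{L^p(W)}$ and $\|\v{g}_k\|_{L^{p'}(W^{1-p'})}\to 2^{-1/p'}\|\v{g}\|_{L^{p'}(W^{1-p'})}$. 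The resulting bound on $|\langle A_E\v{f},\v{g}\rangle|$ together with duality and Proposition \ref{AveProp} closes the proof.

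I expect the main technical obstacle to be verifying that $\psi_\ell\in W_0(\Rd)$ in even dimensions; the odd-dimensional case is essentially free, whereas the even case genuinely requires Lemma \ref{SchurMultLem} combined with the algebra property of $W_0$. A secondary point of care is keeping the numerical constants straight in the Lacey--Treil limit (the factor $\tfrac14$ from the inner product and the factor $2^{-1/p}\cdot 2^{-1/p'}=\tfrac12$ from the norms combine with $\varepsilon^{-d}\approx 1/|\mathscr{B}|$ to produce exactly the claimed factor $|\mathscr{B}|/|E|$).
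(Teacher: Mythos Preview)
Your proposal is correct and follows essentially the same route as the paper: both manufacture the constant kernel $c_d/|\mathscr{B}|$ on $\mathscr{B}\times\mathscr{B}$ from the Riesz kernels via the identity $\sum_\ell \psi_\ell((x-y)/\varepsilon)K_\ell(x,y)=\varepsilon^{-d}\phi((x-y)/\varepsilon)$ with $\psi_\ell(z)=z_\ell|z|^{d-1}\phi(z)\in W_0(\Rd)$, then remove the disjoint-support restriction in Lemma~\ref{LTLem} using the Liaw--Treil projections $P_k^1,P_k^2$, and finally invoke Proposition~\ref{AveProp}. The only cosmetic differences are that the paper uses $\phi^2$ (writing $z_\ell|z|^{d-1}\phi^2=(z_\ell|z|^{d-2}\phi)\cdot(|z|\phi)$) where you introduce an auxiliary $\widetilde\phi$, and the paper takes $\varepsilon$ equal to the radius with $\phi\equiv1$ on $\{|x|<2\}$ rather than your $\varepsilon=2r_{\mathscr{B}}$; the bookkeeping with the factors $1/4$ and $2^{-1/p}\cdot2^{-1/p'}$ is also handled implicitly in the paper but made explicit by you.
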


\begin{proof}

Let $\MC{B}$ be a ball with radius $\varepsilon > 0$. We will only consider the case that $d$ is even, since the case that $d$ is odd is much easier and does not require Lemma \ref{SchurMultLem}.    Let $\phi \in C_c^\infty(\Rd)$ satisfy $\phi(x) = 1$ if $|x| < 2$,  so by Lemma \ref{SchurMultLem} and the fact that $x_\ell |x|^{d - 2} \phi(x) \in C_c ^\infty(\Rd) \subseteq W_0(\Rd)$ we get that $ x_\ell   |x|^{d-1} \phi ^2 (x) \in W_0 (\Rd)$.  Using Lemma \ref{LTLem} and summing over $\ell$ then gives us that the kernel \begin{equation*} \varepsilon ^{- d } \phi ^2 \left(\frac{x - y}{\varepsilon} \right) \unit_{\mathscr{B} \times \mathscr{B}} (x, y)  = \frac{c_d}{|\mathscr{B}|}  \unit_{\mathscr{B} \times \mathscr{B}} (x, y) \end{equation*}  replacing $\psi\left(\frac{x-y}{\epsilon}\right) \unit_{E \times E} (x, y) K_{\ell}(x, y)$ satisfies \eqref{restSchurbdd}.
  Thus, if $P_k ^\ell $ are the projections from the previous lemma then for any $\v{f} \in L^2 \cap L^p(W)$ and $ \v{g} \in L^2 \cap L^{p'} (W^{1-p'})$ we have \begin{align*} \abs{\ip{A_{E} \v{f}}{\v{g}}_{L^2}} & = \frac{|\mathscr{B}|}{|{E}|} \lim_{k \rightarrow \infty}  \abs{\ip{\unit_{E} A_\mathscr{B} \unit_{E} P_k ^1  \v{f}}{P_k ^2  \v{g}}_{L^2}} \\ & \lesssim
    \frac{|\mathscr{B}|}{|{E}|} \max_{1 \leq \ell \leq d}  \|\unit_{E} (R_\ell \otimes \mathbf{I}_m) \unit_{E} \|_{L^p(W) \rightarrow L^p(W)}   \lim_{k \rightarrow \infty} \|P_k ^1 \v{f}\|_{L^p(W)} \| P_k ^2 \v{g} \|_{L^{p'} (W^{1-p'})} \\ & = \frac{|\mathscr{B}|}{|{E}|} \max_{1 \leq \ell \leq d}  \|\unit_{E} (R_\ell \otimes \mathbf{I}_m) \unit_{E}\|_{L^p(W) \rightarrow L^p(W)} \| \v{f}\|_{L^p(W)} \| \v{g} \|_{L^{p'} (W^{1-p'})}. \end{align*}  However, since bounded functions with compact support are dense in $L^p(W)$ and $ L^{p'} (W^{1-p'})$,  Proposition \ref{AveProp} then says that

    \begin{equation*} \|\MC{W}_{E} '  \MC{W}_{E}\| \approx \norm{A_E}_{L^p(W) \rightarrow L^p(W)} \lesssim \frac{|\mathscr{B}|}{|{E}|} \max_{1 \leq \ell \leq d}  \|\unit_{E}(R_\ell \otimes \mathbf{I}_m) \unit_{E} \|_{L^p(W) \rightarrow L^p(W)}\end{equation*}

\end{proof}

We now finish the proof of Theorem \ref{RieszThm}.  Fix a ball $\mathscr{B}$ and define $E_M = \{x \in \mathscr{B} : \max\{\norm{U(x)}, \norm{U^{-1}(x)}, \norm{V(x)}, \norm{V^{-1}(x)}\} < M\}$ where $M > 0$ is large enough that $2 |E_M| > |\mathscr{B}|$.

Defining \begin{equation*} \AP{W}{E} =  \fint_{E} \left( \fint_{E} \|W^{\frac{1}{p}} (x) W^{- \frac{1}{p}} (y) \|^{p'} \, dy \right)^\frac{p}{p'} \, dx  \end{equation*} and \begin{equation*} \|B\|_{\MOVUT{{E}}} ^p  = \fint_{E} \pr{\fint_{E} \norm{V^\frac{1}{p} (x) (B(x) - B(y)) U^{-\frac{1}{p}}(x) }^{p'} \, dy }^\frac{p}{p'} \, dx \end{equation*} and also defining $W$ and $\Phi$ as in the beginning of Section \ref{UBSection}, we have using Lemma \ref{AveLem} with respect to $E = E_M$ that \begin{align*} \bigg(\AP{U}{E_M} & + \AP{V}{E_M} + \|B\|_{\MOVUT{E_M}}^p \bigg)^\frac{1}{p}
 \\ & \approx \AP{W}{E_M} ^\f{1}{p}
\\ & \approx \|\MC{W}_{E_M} ' \MC{W}_{E_M}\|
\\ & = \|\MC{W}_{E_M}  \MC{W}_{E_M} ' \|
\\ & \lesssim \max_{1 \leq \ell \leq d}  \|\unit_{E_M} (R_\ell \otimes \mathbf{I}_{2m}) \unit_{E_M} \|_{L^p(W) \rightarrow L^p(W)}
\\ & \lesssim \max_{1 \leq \ell  \leq d} \left(\| [M_B, R_\ell\otimes \mathbf{I}_{2m}]\|_{L^p(U) \rightarrow L^p(V)} \right. +
\\ & \qquad + \left.\|\unit_{E_M}(R_\ell \otimes \mathbf{I}_{2m})\unit_{E_M}\|_{L^p(U) \rightarrow L^p(U)} + \|\unit_{E_M}(R_\ell \otimes \mathbf{I}_{2m})\unit_{E_M}\|_{L^p(V) \rightarrow L^p(V)}.\right) \end{align*}

Notice that all quantities above are bounded as all weights involved are pointwise bounded in norm and we assume $\| [M_B, R_\ell\otimes \mathbf{I}_{2m}]\|_{L^p(U) \rightarrow L^p(V)} < \infty$.  Thus, as was done in the proof of Theorem \ref{IntLB}, we can rescale and set $B \mapsto rB$, divide by $r$, and let $r \rightarrow \infty$ to get that

\begin{equation*} \|B\|_{\MOVUT{E_M}} \lesssim \max_{1 \leq \ell  \leq d} \| [M_B, R_\ell\otimes \mathbf{I}_m]\|_{L^p(U) \rightarrow L^p(V)}. \end{equation*} First letting $M \rightarrow \infty$ and using Fatou's lemma, and then taking the supremum over all balls $\mathscr{B}$ shows that $$ \|B\|_{\BMOVUT} \lesssim \max_{1 \leq \ell \leq d} \|[M_B, R_\ell]\|_{L^p(U) \rightarrow L^p(V)} $$ as desired.    To show the same estimate is true with $\|B\|_{\BMOVUTd}$  expand out $\AP{W}{E_M} ^\f{1}{p}
 \approx \|\MC{W}_{E_M} '  \MC{W}_{E_M}\| $ using the reducing matrix $\MC{W}_{E_M} $ first and repeat the arguments above, which completes the proof of \eqref{RieszThmA}.

\section{John Nirenberg theorems} \label{JNSection}

 We will finish this paper by proving the equivalency between $\|B\|_{\BMOVUT}$ and $\|B\|_{\BMOVU}$ when $U$ and $V$ are matrix A${}_p$ weights.  Note that we will not track the $\Ap{U}$ and $\Ap{V}$ dependence of our constants because we will need to use the lower matrix weighted Triebel-Lizorkin bounds from \cite{NT,V} when $d = 1$ and $d > 1$ in \cite{I2}, which are most likely far from sharp.     We will need the following simple result that is a special case of Theorem $2.2$ in \cite{I} and proved using a simple idea from \cite{IKP}.    Note that throughout this section $\D$ will refer to some dyadic lattice of cubes in $\Rd$.

%\begin{lm}  \label{SharpMaxLM} Let $M_U ' \v{f}$  be the Goldberg intermediary maximal function defined by \begin{equation*} M_U ' \v{f} (x) = \sup_{\D %\ni Q \ni x}  m_Q |\U_Q U^{-\frac{1}{p}} \vec{f}| \end{equation*}  It is easy to see (and well known, see \cite{G}) that \begin{equation*} \|M_U '\|_{L^p %\rightarrow L^p} \lesssim \Ap{U} ^{\frac{1}{p}}  \Apinf{U}{p}^\frac{1}{p} \end{equation*}
%  \end{lm}

\

\begin{prop} \label{embedthm} Let $U$ be a matrix A${}_p$ weight and let $A = \{a_Q\}_{Q \in \MC{D}} $ be a nonnegative Carleson sequence of scalars, meaning that \begin{equation*} \|A\|_* ^2 =\sup_{J \in \D} \frac{1}{|J|} \sum_{Q \in \D(J)} a_Q ^2 < \infty \end{equation*} Then for any $\vec{f} \in L^p$ we have \begin{equation} \label{embedineq}\pr{\inrd  \left(\sum_{Q \in \D} \frac{   [ a_Q m_Q |\U_Q U^{-\frac{1}{p}} \vec{f}|]^2}{|Q|} \unit_Q (x) \right)^\frac{p}{2} \, dx}^\frac{1}{p} \lesssim
 \|A\|_*  \|\vec{f}\|_{L^p}  \end{equation} \end{prop}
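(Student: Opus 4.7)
The plan is to reduce the matrix-weighted Carleson estimate \eqref{embedineq} to the classical scalar $L^p$ Carleson embedding by dominating $m_Q|\U_Q U^{-\frac{1}{p}}\vec{f}|$ pointwise (in $Q$) by an $m_Q$-average of the Christ--Goldberg matrix maximal function of $\vec{f}$. The matrix A${}_p$ hypothesis on $U$ enters through the equivalence \eqref{RedOpAveEq}, which lets me replace the reducing matrix $\U_Q$ by the ordinary matrix average $m_Q(U^{\frac{1}{p}})$ up to $[U]_{A_p}$-dependent constants which I do not track, consistent with the remark at the start of this section.

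Concretely, for any fixed vector $\vec{w}$ the triangle inequality yields $|m_Q(U^{\frac{1}{p}})\vec{w}| \leq \fint_Q |U^{\frac{1}{p}}(x)\vec{w}|\,dx$, and taking $\vec{w} = U^{-\frac{1}{p}}(y)\vec{f}(y)$ (for fixed $y \in Q$) combined with \eqref{RedOpAveEq} gives
\begin{equation*}
  |\U_Q U^{-\frac{1}{p}}(y)\vec{f}(y)| \lesssim \fint_Q |U^{\frac{1}{p}}(x) U^{-\frac{1}{p}}(y)\vec{f}(y)|\,dx.
\end{equation*}
Averaging over $y \in Q$ and applying Fubini then produces the scalar pointwise bound
\begin{equation*}
  m_Q|\U_Q U^{-\frac{1}{p}}\vec{f}| \lesssim \fint_Q M_U \vec{f}(x)\,dx = m_Q(M_U \vec{f}),
\end{equation*}
where $M_U \vec{f}(x) \defeq \sup_{Q \ni x}\fint_Q |U^{\frac{1}{p}}(x) U^{-\frac{1}{p}}(y)\vec{f}(y)|\,dy$ is the Christ--Goldberg matrix maximal function.

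Substituting the pointwise bound into the left side of \eqref{embedineq}, the problem reduces to the classical scalar $L^p$ Carleson embedding applied with the non-negative scalar function $g \defeq M_U \vec{f}$, namely
\begin{equation*}
  \left\|\left(\sum_{Q\in\D}\frac{a_Q^2 (m_Q g)^2}{|Q|}\unit_Q\right)^{\frac{1}{2}}\right\|_{L^p} \lesssim \|A\|_*\|g\|_{L^p},
\end{equation*}
which is standard for every $1 < p < \infty$ and every Carleson sequence $A$. To conclude, I would invoke Goldberg's theorem on the $L^p \to L^p$ boundedness of $M_U$ for matrix A${}_p$ weights to replace $\|M_U \vec{f}\|_{L^p}$ by $\|\vec{f}\|_{L^p}$. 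The one delicate step is the pointwise reduction, which rests essentially on the nontrivial reducing-operator equivalence \eqref{RedOpAveEq}; once that is in place, the remaining ingredients are classical scalar Carleson embedding and Goldberg's matrix maximal function theorem.
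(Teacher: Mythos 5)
Your proof is correct and follows essentially the same route as the paper: dominate $m_Q|\U_Q U^{-\frac{1}{p}}\vec{f}|$ pointwise by the average over $Q$ of a matrix maximal function, apply the classical scalar Carleson embedding to that maximal function, and finish with Goldberg's $L^p$ bound. The only (harmless) difference is that the paper uses the ``intermediary'' maximal function $M_U'\vec{f}(x)=\sup_{\D\ni Q\ni x} m_Q|\U_Q U^{-\frac{1}{p}}\vec{f}|$, for which the pointwise domination is tautological, whereas you use the Christ--Goldberg maximal function and therefore need the extra (valid) reduction via \eqref{RedOpAveEq}.
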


\begin{proof}

 Let \begin{equation*} \tilde{A} =  \sum_{\varepsilon \in \S} \sum_{Q \in \D} a_Q h_Q ^\varepsilon \end{equation*}  where  $\S = \{1, 2, \ldots, 2^{d} - 1\}$ and $\{h_Q ^\varepsilon \}_{\{Q \in \D, \varepsilon \in \S\}}$ is any Haar system on $\Rd$.   Clearly for any $\D \ni Q \ni x$ we have that \begin{equation*} m_Q |\U_Q U^{-\frac{1}{p}} \vec{f}| \leq m_Q M_U ' \vec{f} \end{equation*}  where $M_U ' \v{f}$  is the ``Goldberg intermediary maximal function" defined by \begin{equation*} M_U ' \v{f} (x) = \sup_{\D \ni Q \ni x}  m_Q |\U_Q U^{-\frac{1}{p}} \vec{f}| \end{equation*}

 Thus, since $M_U ' : L^p \rightarrow L^p$ for matrix A${}_p$ weights $U$ (see \cite{G}, p. 8), we have that that \begin{align*}  \int_{\Rd}  & \left( \sum_{\varepsilon \in \S} \sum_{Q \in \D} \frac{  [a_Q m_Q M_U ' \vec{f}]^2 }{|Q|} \unit_Q(x) \right)^\frac{p}{2} \, dx
 \\ & \lesssim \|\pi_{\tilde{A}} M_U ' \vec{f}\|_{L^p} ^p   \lesssim \|A\|_* ^p \|M_U ' \vec{f}\|_{L^p} ^p  \lesssim   \|\v{f}\|_{L^p} ^p \end{align*}
 by unweighted dyadic Littlewood-Paley theory, where here $\pi_{\tilde{A}}$ is the paraproduct \begin{equation*} \pi_{\tilde{A}} g(x) = \sum_{\varepsilon \in \S}  \sum_{Q \in \D} m_Q g \tilde{A} _Q  ^\varepsilon   h_Q ^\varepsilon (x) \end{equation*}

 \end{proof}

The following is the key to proving the equivalence between $\|B\|_{\BMOVUT}$ and $\|B\|_{\BMOVU}$. Note that the lemma below was implicitly proved in \cite{I2} though not explicitly stated, and therefore for the sake of completion we will include the details.

\begin{lm}\label{eJNLem} If $U$ and $V$ are matrix A${}_p$ weights then there exists $\epsilon > 0$ small enough where for any $0 < \epsilon' < \epsilon$ we have \begin{equation*} \sup_{I \in \D(J)} \left( \fint_I \| V^\frac{1}{p} (x) (B(x) - m_I B ) \U_I ^{-1} \| ^p \, dx \right)^\frac{1}{p} \leq C \sup_{I \in \D(J)} \left(\fint_I  \| \V_I (B(x) - m_I B) \U_I  ^{-1}  \|^{1 + \epsilon'}  \, dx \right)^\frac{1}{1 + \epsilon'} \end{equation*}  and \begin{equation*}\sup_{I \in \D(J)} \left(\fint_I \|\V_I (B(x) - m_I B) \U_I ^{-1}\| ^{1 + \epsilon} \, dx \right)^\frac{1}{1+ \epsilon} \leq C \sup_{I \in \D(J)} \, \fint_I \| V^\frac{1}{p} (x) (B(x) - m_I B ) \U_I ^{-1} \| ^p \, dx. \end{equation*} where $C$ is independent of $B$ and $J$ (but depends on $\epsilon'$.) \end{lm}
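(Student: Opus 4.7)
The plan is to prove both inequalities by passing through a common dyadic square-function expression, using the lower matrix-weighted Triebel--Lizorkin bounds from \cite{NT,V,I2} together with the Carleson embedding of Proposition \ref{embedthm}, and then invoking the extrapolation of reverse H\"older inequality from \cite{T} to free the exponent parameters $\epsilon,\epsilon'$.

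For the first inequality, fix $I \in \D(J)$ and Haar-expand $B - m_I B = \sum_{Q \in \D(I),\,\varepsilon \in \S} (\Delta_Q^\varepsilon B)\,h_Q^\varepsilon$. Testing against a unit vector $\vec{e}\in\C^m$ and applying the lower matrix-weighted Triebel--Lizorkin bound to the vector-valued function $(B - m_I B)\U_I^{-1}\vec{e}$ yields, after summing over an orthonormal basis, the square-function estimate
\begin{equation*}
\fint_I \|V^{\frac{1}{p}}(x)(B(x) - m_I B)\U_I^{-1}\|^p\,dx \lesssim \fint_I \Bigl(\sum_{Q \in \D(I)} \frac{\|\V_Q (\Delta_Q B)\U_I^{-1}\|^2}{|Q|}\,\unit_Q(x)\Bigr)^{p/2}dx.
\end{equation*}
Inserting $\U_Q\U_Q^{-1}$ between $\Delta_Q B$ and $\U_I^{-1}$ splits the summand as $\|\V_Q(\Delta_Q B)\U_Q^{-1}\|^2 \cdot \|\U_Q \U_I^{-1}\vec{e}\|^2$. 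Setting $a_Q = \|\V_Q(\Delta_Q B)\U_Q^{-1}\|$, a routine argument (using the Haar coefficient characterization of martingale BMO and orthogonality on $I$) shows that $\{a_Q^{1+\epsilon'}\}_{Q\in\D(I)}$ is Carleson with norm controlled by the right-hand-side BMO quantity raised to the $(1+\epsilon')$-th power. A direct application of Proposition \ref{embedthm}, exploiting \eqref{RedOpAveEq} to recognize the $\U_Q\U_I^{-1}$ factors as averages of $\U_Q U^{-1/p}\vec{f}$ for a suitable test function $\vec{f}$ supported in $I$, converts the square-function bound into the desired right-hand estimate.

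For the second (reverse) inequality, apply the pointwise bound
\begin{equation*}
\|\V_I(B(x) - m_I B)\U_I^{-1}\| \leq \|\V_I V^{-\frac{1}{p}}(x)\| \cdot \|V^{\frac{1}{p}}(x)(B(x) - m_I B)\U_I^{-1}\|,
\end{equation*}
and split the $L^{1+\epsilon}$ average on $I$ by H\"older's inequality with exponents $r,r'$ chosen so that $(1+\epsilon)\,r = p$. The resulting factor $\fint_I \|\V_I V^{-\frac{1}{p}}(x)\|^{(1+\epsilon)\,r'}\,dx$ is controlled by a matrix-weighted reverse H\"older inequality, obtained by the extrapolation technique of \cite{T}: starting from the matrix A${}_p$ condition \eqref{MatrixApDef} one self-improves the integrability exponent by a small fixed amount that depends only on $\Ap{V}$ and dimension. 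Picking $\epsilon$ small enough to stay within the reverse H\"older threshold and recombining finishes the estimate.

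The principal obstacle is controlling the matrix non-commutativity when swapping $\V_Q$ and $V^{\frac{1}{p}}(x)$ inside the Triebel--Lizorkin expansion and identifying the correct scalar Carleson sequence $\{a_Q^{1+\epsilon'}\}$ so that Proposition \ref{embedthm} can be invoked cleanly on a single test function rather than on matrix-valued data. A secondary, more technical difficulty lies in verifying that the reverse H\"older upgrade for matrix A${}_p$ weights produces constants independent of $B$ and $J$, which is precisely what the extrapolation from \cite{T} provides.
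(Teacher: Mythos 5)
Your treatment of the second inequality is essentially the paper's argument (its Lemma \ref{JN2}): the pointwise factorization through $\|\V_I V^{-\frac{1}{p}}(x)\|$, H\"older with $(1+\epsilon)r=p$, and a small reverse--H\"older upgrade of the $L^{p'}$ integrability of $\|\V_I V^{-\frac{1}{p}}(x)\|$ coming from the A${}_p$ condition. That half is fine.

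The first inequality, however, has a genuine gap. Your reduction to the square function $\fint_I \bigl(\sum_{Q}\|\V_Q (\Delta_Q B)\U_Q^{-1}\|^2\|\U_Q\U_I^{-1}\|^2|Q|^{-1}\unit_Q\bigr)^{p/2}$ and the use of Proposition \ref{embedthm} to absorb the $\U_Q\U_I^{-1}$ factors is exactly the paper's Lemma \ref{JN1}. But you then assert that ``a routine argument (using the Haar coefficient characterization of martingale BMO and orthogonality on $I$)'' shows the Carleson bound $\sup_{I}|I|^{-1}\sum_{Q\in\D(I)}\|\V_Q B_Q^\varepsilon\U_Q^{-1}\|^2 \lesssim \sup_{I}\bigl(\fint_I\|\V_I(B-m_IB)\U_I^{-1}\|^{1+\epsilon'}\bigr)^{2/(1+\epsilon')}$. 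This step is the heart of the lemma and is not routine: orthogonality of the Haar coefficients is destroyed because the reducing matrices $\V_Q$ and $\U_Q^{-1}$ change from cube to cube and do not commute with $B_Q^\varepsilon$, and moreover you are trying to control an $\ell^2$ sum by an $L^{1+\epsilon'}$ average with $1+\epsilon'<2$, so no Bessel-type inequality applies directly. The paper handles this (Lemma \ref{JN3}) with a two-weight stopping time: one builds generations $\J^j(I)$ of maximal cubes where $\|\U_K\U_J^{-1}\|>\lambda$ or $\|\V_K^{-1}\V_J\|>\lambda$, freezes the reducing matrices at the stopping parents (at the cost of a factor $\lambda^2$), applies the classical John--Nirenberg/Littlewood--Paley argument on each frozen block, and then sums using the packing estimate $|\bigcup\J^j(I)|\leq 2^{-j}|I|$. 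Without this (or an equivalent mechanism for decoupling the $Q$-dependence of the reducing matrices), your chain does not close. A smaller point: Proposition \ref{embedthm} requires the Carleson condition on $\sum_Q a_Q^2$, so working with the sequence $\{a_Q^{1+\epsilon'}\}$ as you propose does not feed into that proposition in the form stated.
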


We prove Lemma \ref{eJNLem} through a series of lemmas.

\begin{lm} \label{JN1} If $U, V$ are matrix A${}_p$ weights and $B$ is locally integrable then \begin{equation*} \sup_{I \in \D(J)} \fint_I \| V^\frac{1}{p} (x) (B(x) - m_I B) \U_I ^{-1}\| ^p \, dx \leq C \sup_{I \in \D(J)} \left( \frac{1}{|I|} \sum_{\substack{ Q \in \D(I)  \\ \varepsilon \in \S}} \| \V_Q B_Q ^\varepsilon \U_Q ^{-1} \| ^2\right)^\frac12 \end{equation*}  where $C$ is independent of $B$ and $J$.
\end{lm}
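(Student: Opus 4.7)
The plan is to expand $B-m_I B$ in the Haar basis on $I$, apply the upper matrix-weighted dyadic Littlewood-Paley bound in $L^p(V)$ to pass to a square function of the Haar coefficients $B_Q^\varepsilon$, and then exploit the reducing-matrix factorization $\U_I^{-1}=\U_Q^{-1}\U_Q\U_I^{-1}$ to recast the resulting square-function estimate in the form required by the Carleson embedding of Proposition \ref{embedthm}.

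Concretely, fix $I\in\D(J)$ and a unit vector $\vec{e}\in\C^m$. On $I$ one has the Haar expansion $B(x)-m_I B=\sum_{Q\in\D(I),\,\varepsilon\in\S}B_Q^\varepsilon h_Q^\varepsilon(x)$, so the upper matrix-weighted dyadic Littlewood-Paley estimate applied to the $\C^m$-valued function $(B-m_I B)\U_I^{-1}\vec{e}$ yields
\begin{equation*}
\int_I \abs{V^{\f{1}{p}}(x)(B(x)-m_I B)\U_I^{-1}\vec{e}}^p\,dx \lesssim \int_I \pr{\sum_{Q\in\D(I),\,\varepsilon\in\S}\f{\abs{\V_Q B_Q^\varepsilon\U_I^{-1}\vec{e}}^2}{|Q|}\unit_Q(x)}^{p/2}\,dx.
\end{equation*}
Factoring through $\U_Q$ via $\abs{\V_Q B_Q^\varepsilon\U_I^{-1}\vec{e}}\leq \norm{\V_Q B_Q^\varepsilon\U_Q^{-1}}\abs{\U_Q\U_I^{-1}\vec{e}}$ and writing $a_Q^\varepsilon:=\norm{\V_Q B_Q^\varepsilon\U_Q^{-1}}$ casts the right-hand side in the form required by Proposition \ref{embedthm}.

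The next step is to invoke Proposition \ref{embedthm} with the Carleson sequence $A=\{a_Q^\varepsilon\}$, extended by zero outside $\D(J)$, and with the test function $\vec{f}(y):=U^{\f{1}{p}}(y)\U_I^{-1}\vec{e}\,\unit_I(y)$. For each $Q\in\D(I)$ the vector $\U_I^{-1}\vec{e}$ is constant in $y$, so $m_Q\abs{\U_Q U^{-\f{1}{p}}\vec{f}}=\abs{\U_Q\U_I^{-1}\vec{e}}$, while the $L^p$ reducing property of $\U_I$ gives $\norm{\vec{f}}_{L^p}^p\approx |I|$. Since $a_Q^\varepsilon$ vanishes off $\D(J)$, the Carleson norm $\|A\|_*$ is dominated by the right-hand side of Lemma \ref{JN1}, and Proposition \ref{embedthm} therefore produces
\begin{equation*}
\int_I\abs{V^{\f{1}{p}}(x)(B(x)-m_I B)\U_I^{-1}\vec{e}}^p\,dx\lesssim |I|\,\|A\|_*^p.
\end{equation*}
Dividing by $|I|$, summing over a fixed orthonormal basis $\{\vec{e}_k\}_{k=1}^m$ of $\C^m$ to pass from vector to operator norms (at the cost of an $m$-dependent constant), and taking the supremum over $I\in\D(J)$ closes the argument.

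The main obstacle is the upper matrix-weighted Littlewood-Paley bound used in the first step; this is the companion of the lower bounds from \cite{NT,V,I2} already invoked for Lemma \ref{eJNLem} and holds for matrix $\text{A}_p$ weights (in fact it follows from the lower bound for the dual matrix $\text{A}_{p'}$ weight $V^{-p'/p}$ by duality). Once this tool is in hand, the remainder is the mechanical combination of reducing-matrix telescoping with the Carleson embedding of Proposition \ref{embedthm}, with no further weighted harmonic analysis required.
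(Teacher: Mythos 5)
Your proposal is correct and follows essentially the same route as the paper: the matrix-weighted Littlewood--Paley/Triebel--Lizorkin upper estimate for $\|V^{1/p}(B-m_IB)\U_I^{-1}\vec e\|_{L^p}$ in terms of the square function of the Haar coefficients, the insertion of $\U_Q^{-1}\U_Q$ to produce the Carleson coefficients $\|\V_Q B_Q^\varepsilon\U_Q^{-1}\|$, and Proposition \ref{embedthm} applied to the test function $U^{1/p}\U_I^{-1}\unit_I$ (whose $L^p$ norm is $\approx|I|^{1/p}$ by the reducing-matrix property). The only cosmetic difference is that you work with vectors $\vec e$ and sum over a basis, whereas the paper runs the identical computation directly with operator norms.
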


\begin{proof} Let $I \in \D(J)$.  By the Triebel-Lizorkin embedding (see \cite{NT, V} for $d = 1$ and \cite{I2} for $d > 1$) we have that \begin{align*} \fint_I &  \| V^\frac{1}{p} (x) (B(x) - m_I B) \U_I ^{-1}\| ^p \, dx  \\ & \lesssim  \fint_I \left(\sum_{\substack{ Q \in \D(I)  \\ \varepsilon \in \S}}  \frac{\|\V_Q B_Q \U_I ^{-1}\|^2}{|Q|} \unit_Q (t) \right)^\frac{p}{2} \, dt
\\ & \leq \fint_I \left(\sum_{\substack{ Q \in \D(I)  \\ \varepsilon \in \S}}  \frac{\|\V_Q B_Q \U_Q^{-1} \|^2} {|Q|} \|m_Q [(\U_Q U^{-\frac{1}{p}}) U^\frac{1}{p} \U_I ^{-1} \unit_I]\|^2 \unit_Q (t) \right)^\frac{p}{2} \, dt
\\ & \lesssim  \left(\sup_{I \in \D(J)} \frac{1}{|I|} \sum_{\substack{ Q \in \D(I)  \\ \varepsilon \in \S}} \| \V_Q B_Q ^\varepsilon \U_Q ^{-1} \| ^2\right)^\frac{p}{2} \fint_I \|U^\frac{1}{p} \U_I ^{-1}\|^p \, dx \end{align*} where in the last line we used Proposition \ref{embedthm}.  \end{proof}

\begin{lm} \label{JN2}

For $\epsilon > 0$ small enough (independent of $ B$) we have \begin{equation*} \left( \fint_I \|\V_I (B(x) - m_I B) \U_I ^{-1}\| ^{1 + \epsilon} \, dx \right)^\frac{1}{1+ \epsilon} \leq C \left( \fint_I \| V^\frac{1}{p} (x) (B(x) - m_I B ) \U_I ^{-1} \| ^p \, dx \right)^\frac{1}{p} \end{equation*} where $C$ is independent of $I$ and $B$.  \end{lm}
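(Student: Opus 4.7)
The plan is to compare the two averages by inserting $V^{-1/p}(x)V^{1/p}(x)=\mathbf{I}_m$ inside the norm and splitting via Hölder, absorbing the resulting exchange factor through a reverse Hölder inequality for the matrix A${}_p$ weight $V$. The pointwise starting inequality, from submultiplicativity of the operator norm, is
\begin{equation*}
\|\V_I(B(x)-m_I B)\U_I^{-1}\|\;\le\;\|\V_I V^{-1/p}(x)\|\,\|V^{1/p}(x)(B(x)-m_I B)\U_I^{-1}\|, \qquad x\in I.
\end{equation*}

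Raising to the $(1+\epsilon)$-th power, averaging over $I$, and applying Hölder's inequality with conjugate exponents $\tfrac{p}{1+\epsilon}$ and $\tfrac{p}{p-1-\epsilon}$ gives
\begin{align*}
\fint_I &\|\V_I(B(x)-m_I B)\U_I^{-1}\|^{1+\epsilon}\,dx \\
&\le \left(\fint_I \|V^{1/p}(x)(B(x)-m_I B)\U_I^{-1}\|^p\,dx\right)^{\!(1+\epsilon)/p}\!\left(\fint_I \|\V_I V^{-1/p}(x)\|^{q}\,dx\right)^{\!(p-1-\epsilon)/p},
\end{align*}
where $q=q(\epsilon)=\tfrac{(1+\epsilon)p}{p-1-\epsilon}$, which tends to $p'$ as $\epsilon\to 0^+$. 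Taking $(1+\epsilon)$-th roots, the first factor is exactly (a power of) the right side of the desired inequality, so the proof reduces to bounding the second factor by a constant depending only on $\Ap{V}$, uniformly in $I$ and $B$.

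The main obstacle is therefore an inverse Hölder estimate for the scalar function $y\mapsto\|\V_I V^{-1/p}(y)\|$ at an exponent slightly larger than $p'$. The dual form \eqref{MatrixApDualDef} of the matrix A${}_p$ condition, combined with the defining property $|\V_I\vec{e}|^p\approx\fint_I|V^{1/p}(x)\vec{e}|^p\,dx$, already yields $\fint_I \|\V_I V^{-1/p}(y)\|^{p'}\,dy\lesssim \Ap{V}^{p'/p}$; one needs to upgrade the exponent to some $p'+\delta$ with $\delta>0$ independent of $I$. This is precisely the matrix reverse Hölder / "extrapolation of inverse Hölder inequality" content of \cite{I,IKP,T} referenced in the introduction. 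Once the self-improvement is in hand, simply choosing $\epsilon$ small enough that $q(\epsilon)\le p'+\delta$ makes the second factor uniformly bounded and completes the proof; no further interaction with $B$ or $U$ is required.
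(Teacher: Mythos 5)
Your argument is the same as the paper's: insert $V^{-1/p}(x)V^{1/p}(x)$, apply H\"older with exponents $\tfrac{p}{1+\epsilon}$ and $\tfrac{p}{p-1-\epsilon}$, and control the factor $\bigl(\fint_I \|\V_I V^{-1/p}(x)\|^{p(1+\epsilon)/(p-1-\epsilon)}\,dx\bigr)$ by the matrix A${}_p$ condition together with its reverse-H\"older self-improvement for $\epsilon$ small. The proposal is correct and matches the paper's proof step for step.
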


\begin{proof} \begin{align*} \biggl(   \fint_I & \|\V_I (B(x) - m_I B) \U_I ^{-1}\| ^{1 + \epsilon} \, dx \biggr)^\frac{1}{1+ \epsilon} \\ & \leq \left(\fint_I \|\V_I V^{-
 \frac{1}{p}} (x) \|^{1 + \epsilon} \| V^\frac{1}{p} (x) (B(x) - m_I B) \U_I ^{-1}\| ^{1 + \epsilon} \, dx \right)^\frac{1}{1+ \epsilon} \\ & \leq \left(\fint_I \| \V_I V^{-\frac{1}{p}} (x) \| ^\frac{p(1 + \epsilon)}{p - 1 - \epsilon} \, dx \right) ^{\frac{1}{1  +  \epsilon} (\frac{p - 1 - \epsilon}{p})} \left(\fint_I\| V^\frac{1}{p} (x) (B(x) - m_I B ) \U_I ^{-1} \| ^p \, dx \right)^\frac{1}{p} \\ & \leq \Ap{V}^\frac{1}{p}  \left(\fint_I \| V^\frac{1}{p} (x) (B(x) - m_I B ) \U_I ^{-1} \| ^p \, dx \right)^\frac{1}{p} \end{align*} for $\epsilon > 0$ small enough by the reverse H\"{o}lder inequality.
\end{proof}

We now recall the two matrix weighted stopping time from \cite{I2} which is a modification of the one matrix weighted stopping time from \cite{IKP}.   Finally assume that $U, V$ are a matrix A${}_p$ weights and that $\lambda$ is large. For any cube $I \in \D$, let $\J(I)$ be the collection of maximal $J \in \D(I)$ such that either of the two conditions \begin{equation*}      \|\U_I  \U_J ^{-1}\|  > \lambda \    \text{   or   }  \  \|\V_I ^{-1} \V_J\|  > \lambda.  \end{equation*}   Also, let $\F(I)$ be the collection of dyadic subcubes of $I$ not contained in any cube $J \in \J(I)$, so that clearly $J \in \F(J)$ for any $J \in \D$.

Let $\J^0 (I) := \{I\}$ and inductively define $\J^j(I)$ and $\F^j(I)$ for $j \geq 1$ by \begin{equation*} \J^j (I) := \{ R \in \J(Q) : Q \in \J^{j - 1}(I)\} \end{equation*}  and  $\F^j (I) = \{J' \in \F(J) : J \in \J^{j - 1}(I)\}$. Clearly the cubes in $\J^j(I)$ for $j > 0$ are pairwise disjoint.  Furthermore, since $J \in \F(J)$ for any $J \in \D(I)$, we have that $\D(I) = \bigcup_{j = 1}^\infty \F^j(I)$ and that the collections $\F^j(I)$ are disjoint.  We will slightly abuse notation and write $\bigcup \J(I)$ for the set $\bigcup_{J \in \J(I)} J$ and write $|\bigcup \J(I)|$ for $|\bigcup_{J \in \J(I)} J|$. By easy arguments (see \cite{I2}) we can pick $\lambda$ depending on $U$ and $V$  so that \begin{equation} \label{decay} |\bigcup \J ^j (I)| \leq 2^{-j} |I| \end{equation}  for every $I \in \D$.

\begin{lm} \label{JN3} If $U, V$ are matrix $A_p$ weights and $0 < \epsilon' \leq 1$ then there exists $C > 0$ independent of $J$ and $B$ where \begin{equation*} \sup_{I \in \D(J)} \left( \frac{1}{|I|} \sum_{\substack{ Q \in \D(I) \\  \varepsilon \in \S}} \| \V_Q B_Q ^\varepsilon \U_Q ^{-1} \| ^2\right)^\frac12 \leq C  \sup_{I \in \D(J)} \left(\fint_I \| \V_I (B(x) - m_I B) \U_I  ^{-1}  \|^{1 + \epsilon'}  \, dx \right)^\frac{1}{1 + \epsilon'}. \end{equation*}\end{lm}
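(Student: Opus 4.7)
\emph{Overview.} The plan is to decompose $\D(I)$ into stopping layers $\F^{j}(I)$, reduce to ancestral reducing matrices on each layer via the stopping criterion, apply Haar--Parseval, and then close via a John--Nirenberg style $L^{1+\epsilon'}$-to-$L^{2}$ upgrade for the matrix oscillation (which is the main obstacle).

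\emph{Stopping split and Parseval.} Fix $I\in\D(J)$ and denote by $M$ the right-hand side of the claimed inequality. Splitting $\D(I)=\bigsqcup_{j\geq 1}\bigsqcup_{R\in\J^{j-1}(I)}\F(R)$, and noting that for $Q\in\F(R)$ the stopping criterion together with self-adjointness of the positive definite reducing matrices gives $\|\V_{Q}\V_{R}^{-1}\|=\|\V_{R}^{-1}\V_{Q}\|\leq\lambda$ and $\|\U_{R}\U_{Q}^{-1}\|\leq\lambda$, the factoring
\[
\V_{Q}B_{Q}^{\varepsilon}\U_{Q}^{-1}=(\V_{Q}\V_{R}^{-1})(\V_{R}B_{Q}^{\varepsilon}\U_{R}^{-1})(\U_{R}\U_{Q}^{-1})
\]
yields $\|\V_{Q}B_{Q}^{\varepsilon}\U_{Q}^{-1}\|^{2}\leq\lambda^{4}\|\V_{R}B_{Q}^{\varepsilon}\U_{R}^{-1}\|^{2}$. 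Since the $Q$-th Haar coefficient of $\V_{R}(B-m_{R}B)\U_{R}^{-1}$ is exactly $\V_{R}B_{Q}^{\varepsilon}\U_{R}^{-1}$, Haar--Parseval for the $L^{2}$-normalized Haar basis applied to this matrix-valued function (operator and Hilbert--Schmidt norms being equivalent up to an $m$-dependent constant) yields
\[
\sum_{Q\in\F(R),\,\varepsilon\in\S}\|\V_{R}B_{Q}^{\varepsilon}\U_{R}^{-1}\|^{2}\leq\int_{R}\|\V_{R}(B-m_{R}B)\U_{R}^{-1}\|^{2}\,dx.
\]

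\emph{The main obstacle: $L^{1+\epsilon'}\Rightarrow L^{2}$.} The crux of the argument is the claim
\[
\int_{R}\|\V_{R}(B-m_{R}B)\U_{R}^{-1}\|^{2}\,dx\leq C|R|M^{2}\qquad(R\in\D(J)),
\]
which is a genuine John--Nirenberg self-improvement from the $L^{1+\epsilon'}$ hypothesis on $M$. I will prove it by a distributional estimate: take Calder\'on--Zygmund-type stopping cubes $\{Q_{k}\}\subset R$ maximal with $m_{Q_{k}}\|\V_{R}(B-m_{R}B)\U_{R}^{-1}\|>\alpha$; outside these, Lebesgue differentiation gives $\|\V_{R}(B(x)-m_{R}B)\U_{R}^{-1}\|\leq\alpha$, while a first-moment/H\"older argument gives $|\bigcup Q_{k}|\leq|R|M/\alpha$. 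Exponential decay in $\alpha$ is obtained by iterating inside each $Q_{k}$ via the telescoping $B-m_{R}B=(B-m_{Q_{k}}B)+(m_{Q_{k}}B-m_{R}B)$; in the matrix-weighted case this iteration crucially requires the quantitative one-step $A_{p}$ reducing-matrix control (to bridge $\V_{Q_{k}}\leftrightarrow\V_{R}$ and $\U_{Q_{k}}\leftrightarrow\U_{R}$), combined with the $L^{1+\epsilon'}$ information at the correct reducing matrices on $Q_{k}$. This is essentially the ``extrapolation of inverse H\"older'' argument of \cite{T} in the matrix setting, close to the implicit arguments of \cite{I2,IKP}. It is here, and only here, that the matrix $A_{p}$ hypothesis on $U$ and $V$ is used; following the remark at the start of Section \ref{JNSection} we do not track the resulting (non-sharp) dependence of $C$ on $[U]_{A_{p}}$ and $[V]_{A_{p}}$.

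\emph{Closure.} Assembling the three steps with the geometric decay $|\bigcup\J^{j-1}(I)|\leq 2^{-(j-1)}|I|$ of \eqref{decay} gives
\[
\sum_{Q\in\D(I),\,\varepsilon\in\S}\|\V_{Q}B_{Q}^{\varepsilon}\U_{Q}^{-1}\|^{2}\leq\lambda^{4}CM^{2}\sum_{j\geq 1}\Bigl|\bigcup\J^{j-1}(I)\Bigr|\leq 2\lambda^{4}CM^{2}|I|.
\]
Taking the square root and the supremum over $I\in\D(J)$ yields the lemma.
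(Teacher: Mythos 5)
Your stopping-time decomposition and the freezing of reducing matrices over each layer $\F(R)$ match the paper's argument, but the step you yourself flag as the ``main obstacle'' is a genuine gap, and in fact the claim there is false. By passing through Haar--Parseval you reduce everything to the bound $\fint_R\|\V_R(B(x)-m_RB)\U_R^{-1}\|^2\,dx\le CM^2$, i.e.\ an $L^{1+\epsilon'}\Rightarrow L^2$ self-improvement of the \emph{frozen} oscillation. No such improvement holds for general matrix (or even scalar two-weight) $A_p$ data: take $d=1$, $p=2$, $m=1$, $u(x)=|x|^{-1/2}$, $v(x)=|x|^{1/2}$ (both $A_2$), and $b(x)=|x|^{-1/2}$. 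Then $\V_I\U_I^{-1}\approx (m_I\nu)^{-1}$ with $\nu=(uv^{-1})^{1/2}=|x|^{-1/2}$, the hypothesis $M<\infty$ holds for small $\epsilon'$ by the reverse H\"older inequality for $\nu$ (indeed $\fint_I|b-m_Ib|^{1+\epsilon'}\lesssim (m_I\nu)^{1+\epsilon'}$), yet $\fint_{[0,1]}|b-m_{[0,1]}b|^2=\infty$. The conclusion of the lemma survives in this example only because the weight $(m_Q\nu)^{-2}$ sits \emph{inside} the Carleson sum, adapted to each $Q$; your Parseval step freezes it at the top cube $R$ and loses exactly this compensation. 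The telescoping/Calder\'on--Zygmund iteration you sketch cannot repair this, since the stopping cubes $Q_k$ at level $\alpha$ can be arbitrarily small relative to $R$ and the bridging factors $\|\V_R\V_{Q_k}^{-1}\|\,\|\U_{Q_k}\U_R^{-1}\|$ are not uniformly bounded.

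The paper avoids the $L^2$ identity entirely. It first invokes the scalar John--Nirenberg theorem for Carleson sequences to reduce the desired $\ell^2$-sum bound to a uniform bound on
\begin{equation*}
\fint_I\Bigl(\sum_{Q\in\D(I),\,\varepsilon\in\S}\frac{\|\V_QB_Q^\varepsilon\U_Q^{-1}\|^2}{|Q|}\unit_Q(x)\Bigr)^{\frac{1+\epsilon'}{2}}dx,
\end{equation*}
then uses the subadditivity of $t\mapsto t^{(1+\epsilon')/2}$ (this is where $\epsilon'\le1$ enters) to split the sum over the stopping generations $\J^{j-1}(I)$, and on each stopping cube $K$ applies \emph{unweighted} dyadic Littlewood--Paley theory in $L^{1+\epsilon'}$ --- not $L^2$ --- to the function $\unit_K\V_K(B-m_KB)\U_K^{-1}$, which is controlled directly by the hypothesis. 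The generation decay \eqref{decay} then closes the sum exactly as in your last step. If you want to salvage your route, you must replace Parseval by an $L^{1+\epsilon'}$ square-function estimate on each $K$ and move the John--Nirenberg upgrade to the scalar Carleson-sequence level at the very start.
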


\begin{proof}
Fix $I \in \D(J)$.  By the classical unweighted John-Nirenberg theorem and by unweighted dyadic Littlewood-Paley theory, it is enough to prove that \begin{equation*} \fint_I \left( \sum_{\substack{ Q \in \D(I) \\ \varepsilon \in \S}} \frac{\| \V_Q B_Q ^\varepsilon \U_Q ^{-1} \| ^2}{|Q|} \unit_Q(x) \right)^\frac{1 + \epsilon'}{2} \, dx    \leq C  \sup_{I' \in \D(J)} \fint_{I'} \| \V_{I'} (B(x) - m_{I'} B) \U_{I'}  ^{-1}  \|^{1 + \epsilon'}  \, dx. \end{equation*} for $I \in \D(J)$ where $C$ is independent of $I, J$ and $B$.  To that end  \begin{align*} \fint_I & \left( \sum_{\substack{Q \in \D(I) \\ \varepsilon \in \S}} \frac{\| \V_Q B_Q ^\varepsilon \U_Q ^{-1} \| ^2}{|Q|} \unit_Q(x) \right)^\frac{1 + \epsilon'}{2} \, dx \\ & \leq \fint_I\left( \sum_{j = 1}^\infty \sum_{K \in \J^{j-1}(I)} \sum_{\substack{Q \in \F(K) \\ \varepsilon \in \S}} \frac{(\| \V_Q \V_K^{-1} \| \| \V_K B_Q ^\varepsilon \U_K ^{-1} \| \U_K \U_Q ^{-1} \|)^2}{|Q|} \unit_Q(x) \right)^\frac{1 + \epsilon'}{2} \, dx \\ &   \leq {C}\fint_I \left( \sum_{j = 1}^\infty \sum_{K \in \J^{j-1}(I)} \sum_{\substack{Q \in \F(K) \\ \varepsilon \in \S}} \frac{ \| \V_K B_Q ^\varepsilon \U_K ^{-1} \| ^2}{|Q|} \unit_Q(x) \right)^\frac{1 + \epsilon'}{2} \, dx \\ & \leq  \frac{C}{|I|} \sum_{j = 1}^\infty \sum_{K \in \J^{j-1}(I)}   \int_K \left( \sum_{\substack{Q \in \D(K) \\ \varepsilon \in \S}} \frac{ \| \V_K B_Q ^\varepsilon \U_K ^{-1} \| ^2}{|Q|} \unit_Q(x) \right)^\frac{1 + \epsilon'}{2} \, dx \\ & \leq \frac{C}{|I|} \sum_{j = 1}^\infty \sum_{K \in \J^{j-1}(I)}|K| \left( \fint_K \| \V_K (B(x) - m_K B ) \U_K ^{-1} \| ^{1  + \epsilon'} \, dx  \right) \\ & \leq C \sup_{I' \in \D(J)} \left(\fint_{I'} \| \V_{I'} (B(x) - m_{I'} B) \U_{I'}  ^{-1}  \|^{1 + \epsilon'}  \, dx \right)|I|^{-1} \sum_{j = 1}^\infty \sum_{K \in \J^{j-1}(I)}|K|
 \\ & \leq  C \sup_{I' \in \D(J)} \fint_{I'} \| \V_{I'} (B(x) - m_{I'} B) \U_{I'}  ^{-1}  \|^{1 + \epsilon'}  \, dx.   \end{align*}
\end{proof}

  The proof of Lemma \ref{eJNLem} now follows immediately by combining Lemmas \ref{JN1}, \ref{JN2}, and \ref{JN3}.    In particular, let $\epsilon > 0$ be from Lemma \ref{JN2}. Then for any $0 < \epsilon' < \epsilon$ we have \begin{align*} \sup_{I \in \D(J)} \left(\fint_I \|\V_I (B(x) - m_I B) \U_I ^{-1}\| ^{1 + \epsilon} \, dx \right)^\frac{1}{1+ \epsilon} & \leq C \sup_{I \in \D(J)} \left(\fint_I \| V^\frac{1}{p} (x) (B(x) - m_I B ) \U_I ^{-1} \| ^p \, dx \right)^\frac{1}{p}   \\ & \leq C \sup_{I \in \D(J)} \left(\frac{1}{|I|} \sum_{\substack{Q \in \D(I) \\  \varepsilon \in \S}} \| \V_Q B_Q ^\varepsilon \U_Q ^{-1} \| ^2\right)^\frac{1}{2} \\ & \leq C \sup_{I \in \D(J)} \left(\fint_I \| \V_I (B(x) - m_I B) \U_I  ^{-1}  \|^{1 + \epsilon'}  \, dx \right)^\frac{1}{1 + \epsilon'}. \end{align*} \hfill $\square$

We now prove the following,

\begin{lm} \label{TreilLem} If $U, V$ are matrix A${}_p$ weights and $\epsilon' > 0$ is small enough then \begin{equation*} \sup_{I \in \D} \left(\fint_I \| \V_I (B(x) - m_I B) \U_I  ^{-1}  \|^{1 + \epsilon'}  \, dx \right)^\frac{1}{1 + \epsilon'} \lesssim  \sup_{I \in \D} \fint_I \| \V_I (B(x) - m_I B) \U_I  ^{-1}  \|  \, dx    . \end{equation*} \end{lm}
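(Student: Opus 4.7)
The plan is a matrix-weighted John--Nirenberg iteration, modeled on the ``extrapolation of the inverse Hölder inequality'' technique from \cite{T} and built on the two-matrix-weight stopping time from \cite{I2} already employed in the proof of Lemma~\ref{JN3}. Abbreviate $f_I(x) := \|\V_I (B(x) - m_I B)\U_I^{-1}\|$ and, after rescaling, assume $A := \sup_{I \in \D} \fint_I f_I = 1$. The goal is to show that $S := \sup_{I \in \D} \fint_I f_I^{1+\epsilon'}$ is bounded for some small $\epsilon' > 0$. The starting pointwise splitting, for any dyadic $J \subseteq I$ and $x \in J$, is
\begin{equation*}
f_I(x) \;\leq\; \|\V_I \V_J^{-1}\|\,\|\U_J \U_I^{-1}\|\,f_J(x) \;+\; \|\V_I(m_J B - m_I B)\U_I^{-1}\|,
\end{equation*}
and the additive ``constant'' term is bounded by $\fint_J f_I$ via Jensen's inequality.

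Next I fix $I$ and apply the two-matrix-weight stopping time from \cite{I2} with parameter $\lambda_0 = \lambda_0(\Ap{U}, \Ap{V})$, giving generations $\J^j(I), \F^j(I)$ with the geometric decay $|\bigcup \J^j(I)| \leq \alpha^j |I|$, where $\alpha = \alpha(\lambda_0)$ may be taken as small as desired by enlarging $\lambda_0$. For $K \in \J^{j-1}(I)$ I telescope along the stopping chain $I = K_0 \supsetneq K_1 \supsetneq \cdots \supsetneq K_{j-1} = K$: using the matrix $A_p$ parent--child comparability of reducing operators (cf.\ \eqref{RedOpAveEq}), each step's product $\|\V_{K_i} \V_{K_{i+1}}^{-1}\|\,\|\U_{K_{i+1}} \U_{K_i}^{-1}\|$ is controlled by a constant $\lambda_1 = \lambda_1(\lambda_0, \Ap{U}, \Ap{V})$, so iterating the pointwise splitting yields, for $x \in K$,
\begin{equation*}
f_I(x) \;\leq\; \lambda_1^j\, f_K(x) \;+\; C j \lambda_1^j,
\end{equation*}
the additive $Cj\lambda_1^j$ being the accumulated telescoping constants bounded by successive applications of $\fint_{K_i} f_{K_{i-1}} \lesssim \lambda_1^{i-1}$.

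Finally I partition $I = \bigsqcup_{j \geq 0}(\F^j(I) \cap I)$ and raise to the power $1+\epsilon'$. Using the measure decay and the pointwise bound, summing over $K$ and $j$ gives
\begin{equation*}
\fint_I f_I^{1+\epsilon'} \;\lesssim\; \sum_{j \geq 0} \alpha^j \lambda_1^{j(1+\epsilon')}\bigl(S + (Cj)^{1+\epsilon'}\bigr).
\end{equation*}
Choosing $\lambda_0$ large enough that $\alpha \lambda_1^{1+\epsilon'} < 1$ for some $\epsilon' > 0$ (possible by exploiting the fact that $\alpha$ can be made arbitrarily small while $\lambda_1$ grows only polynomially in $\lambda_0$ via the $A_p$ control), one solves for $S$ and concludes $S \lesssim 1$, as desired.

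\emph{Main obstacle.} The delicate step is the telescoping control of $\|\V_I \V_K^{-1}\|\,\|\U_K \U_I^{-1}\|$ along the stopping chain. The stopping conditions of \cite{I2} involve the ``crossed'' one-sided quantities $\|\U_I \U_J^{-1}\|$ and $\|\V_I^{-1}\V_J\|$, while the pointwise splitting requires the opposite-sided products $\|\V_I \V_J^{-1}\|$ and $\|\U_J \U_I^{-1}\|$. Converting between them requires careful invocation of the matrix $A_p$ characteristic in both forms \eqref{MatrixApDef} and \eqref{MatrixApDualDef} together with the parent--child near-invariance of reducing matrices; simultaneously, the parameters $\lambda_0, \lambda_1, \alpha, \epsilon'$ must be balanced so the geometric series closes, which is precisely why $\epsilon'$ must be chosen small and why the quantitative dependence on $\Ap{U}, \Ap{V}$ is not tracked here, consistent with the remark at the start of Section~\ref{JNSection}.
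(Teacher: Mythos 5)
Your route is genuinely different from the paper's, and it contains gaps that I do not see how to close. The paper does not run a direct John--Nirenberg iteration for this lemma at all: it first establishes the two-sided self-improvement of Lemma \ref{eJNLem} (whose proof uses the stopping time, but factored in the \emph{transposed} direction, see below), then replaces $B$ by the truncations $P_RB$ so that $\sup_{Q\in\D(J)}\|F_Q^R\|_{L^{1+\epsilon'}(d\mu_Q)}$ is finite a priori, and finally closes the estimate with the numerical interpolation $\|F\|_{L^{p_1}}^{p_1}\le\|F\|_{L^1}^{\alpha}\|F\|_{L^{p_2}}^{p_1-\alpha}$ (with $p_1=1+\epsilon'$, $p_2=1+\epsilon$) combined with Lemma \ref{eJNLem}, absorbing the finite quantity and letting $R\to\infty$ via Fatou.

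The concrete problems with your iteration are the following. First, the factors you need, $\|\V_{K_i}\V_{K_{i+1}}^{-1}\|$ and $\|\U_{K_{i+1}}\U_{K_i}^{-1}\|$, are the transposes of what the stopping time and parent--child comparability control, namely $\|\V_{K_{i+1}}\V_{K_i}^{-1}\|$ and $\|\U_{K_i}\U_{K_{i+1}}^{-1}\|$; an upper bound on $\|A\|$ gives only a lower bound on $\|A^{-1}\|$. This is not a fixable bookkeeping issue: already for scalar weights the product equals $\left(\frac{m_Iv}{m_Jv}\cdot\frac{m_Ju}{m_Iu}\right)^{1/p}$, which is unbounded over nested pairs $J\subset I$ even with $\Ap{u},\Ap{v}$ fixed (take $u\equiv1$ and $v$ a power weight concentrating near a point of $J$). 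Note that Lemma \ref{JN3} factors in the opposite direction, $\V_QX\U_Q^{-1}=(\V_Q\V_K^{-1})(\V_KX\U_K^{-1})(\U_K\U_Q^{-1})$, precisely so that only the controlled quantities appear. Second, the additive telescoping terms $\fint_{K_{i+1}}f_{K_i}$ are averages of $f_{K_i}$ over subcubes of possibly much smaller measure; since the stopping cubes are selected according to the reducing matrices and not according to $B$, there is no John--Nirenberg-type control of these jumps, and your claimed bound $\fint_{K_i}f_{K_{i-1}}\lesssim\lambda_1^{i-1}$ is unsupported (this is exactly the term a classical stopping time on $b$ itself is designed to handle). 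Third, even granting both, the final display cannot be closed: the $j=0$ term already contributes $S$ with coefficient comparable to $1$, and in any case the absorption requires knowing $S<\infty$ a priori, which is unavailable --- this is precisely why the paper introduces the truncations $P_R$. I would recommend abandoning the direct iteration and following the self-improvement-plus-interpolation scheme.
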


  \begin{proof} For fixed $R \in \mathbb{N}$ let $P_R$ be the canonical projection operator \begin{equation*}  P_R B (x) = \sum_{\substack{I \in \D \\ |I| > 2^{-R}}} \sum_{\varepsilon \in \S}  B_I ^\varepsilon h_I ^\varepsilon = \sum_{\substack{I \in \D \\ |I| = 2^{-R}}} \unit_I m_I B \end{equation*}  which is trivially bounded on $L^p(\Rd)$ for $1 \leq p < \infty$.  For $I \in \D$ let \begin{equation*} F_I (x) = \unit_I (x)\V_I (B(x) - m_I B) \U_I  ^{-1}, \end{equation*} let \begin{equation*} F_I ^R (x) = \unit_I (x) \V_I (P_R B(x) - m_I (P_R B)) \U_I  ^{-1} = P_R [\unit_I \V_I (B - m_I B) \U_I ^{-1}], \end{equation*} and let $d\mu_I (x) = |I|^{-1} \unit_I (x) \, dx $.  Fix $J \in \D$ so trivially \begin{equation*} \sup_{Q \in \D(J)} \|F_Q ^R \|_{L^{1 + \epsilon'} (d\mu_Q)}   \, dx \leq \sup_{Q \in \D(J)} \|F_Q ^R \|_{L^\infty} =  C < \infty \end{equation*} where $C $ possibly depends on $J, R, B, U, $ and $V$.  Also, clearly \begin{equation*} \sup_{Q \in \D} \|F_Q ^R \|_{L^1(d\mu_Q)} \lesssim \|B\|_{\BMOVU} \end{equation*}  independent of $R > 0$ since $P_R$ is bounded on $L^1(\Rd)$ independent of $R$.

 Let $\epsilon > 0$ be from Lemma \ref{eJNLem} (applied to $P_R B$) and let $0 < \epsilon' < \epsilon$.   Let  $p_1 = 1 + \epsilon ', \ p_2 = 1 + \epsilon,  \alpha = \frac{p_2 - p_1}{p_2 - 1}, \ \beta = \frac{p_1 - \alpha}{\alpha}$, and let $C$ be the constant in Lemma \ref{eJNLem}. Then by a use of H\"{o}lder's inequality with respect to the conjugate exponents $\alpha^{-1}$ and $(1-\alpha)^{-1}$ we have

\begin{align*} \sup_{Q \in \D(J)} \|F_Q ^R \|_{L^{p_1} (d\mu_Q) } ^{p_1}  & \leq \sup_{Q \in \D(J)} \|F_Q ^R \|_{L^{1 } (d\mu_Q) }^\alpha  \sup_{Q \in \D(J)} \|F_Q ^R \|_{L^{p_2} (d\mu_Q)} ^{p_1 - \alpha} \\ & \leq \|B\|_{\BMOVU} ^\alpha   C ^{p_1 - \alpha} \sup_{Q \in \D(J)} \|F_Q ^R \|_{L^{p_1} (d\mu_Q)} ^{p_1 - \alpha} \end{align*}  which says that \begin{equation*}  \sup_{Q \in \D(J)} \|F_Q ^R \|_{L^{p_1} (d\mu_Q) } \leq \|B\|_{\BMOVU}  C^{\beta}. \end{equation*} Letting $R \rightarrow \infty$ first (using Fatou's lemma) and then taking the supremum over all $Q \in \D(J)$ and then all $J \in \D$ completes the proof.   \end{proof}

Combining everything we have the following corollary, which finishes the proof that the quantities $\|B\|_{\BMOVUT}$ and $\|B\|_{\BMOVU}$ are equivalent.

\begin{cor} \label{StrongJNCont}  If $U, V$ are matrix weights A${}_p$ weights then there exists $\epsilon > 0$ such that the following quantities are equivalent  \begin{list}{}{}
\item $a)  \   \displaystyle \sup_{\substack{Q \subseteq \Rd \\ Q \text{ is a cube}}} \fint_Q \| \V_Q (B(x) - m_Q B) \U_Q  ^{-1}  \| \, dx $ \\
\item $b) \  \displaystyle \sup_{\substack{Q \subseteq \Rd \\ Q \text{ is a cube}}} \left( \fint_Q \| V^\frac{1}{p} (x) (B(x) - m_Q B ) \U_Q ^{-1} \| ^p \, dx \right)^\frac{1}{p} $
\item $c) \  \displaystyle \sup_{\substack{Q \subseteq \Rd \\ Q \text{ is a cube}}} \left( \fint_Q \| U^{-\frac{1}{p}} (x) (B^*(x) - m_Q B^* ) (\V_Q ') ^{-1} \| ^{p'} \, dx\right)^\frac{1}{p'}.$
\item $d) \ \displaystyle \sup_{\substack{Q \subseteq \Rd \\ Q \text{ is a cube}}} \left(\fint_Q \left(\fint_Q \|V^\frac{1}{p} (x) (B(x) - B(y)) U^{-\frac{1}{p}} (y) \|^{p'}     \, dy \right)^\frac{p}{p'} \, dx\right)^\frac{1}{p}.  $
    \item $e) \ \displaystyle \sup_{\substack{Q \subseteq \Rd \\ Q \text{ is a cube}}} \left(\fint_Q \left(\fint_Q \|V^\frac{1}{p} (x) (B(x) - B(y)) U^{-\frac{1}{p}} (y) \|^{p}     \, dx \right)^\frac{p'}{p} \, dy \right)^\frac{1}{p'}.  $
    \end{list}
    \end{cor}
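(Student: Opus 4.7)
The plan is to close the pentagon $(a)\approx(b)\approx(c)\approx(d)\approx(e)$ by combining the already-proved John--Nirenberg estimates from Lemmas \ref{eJNLem} and \ref{TreilLem} with direct H\"older estimates built on the reducing matrices. The one matrix $A_p$ ingredient I will invoke repeatedly is that $\|\U_Q\U_Q'\|\lesssim \Ap{U}^{1/p}$ (this is the matrix $A_p$ condition itself, via \eqref{MatrixApDualDef}) \emph{and} $\|(\U_Q\U_Q')^{-1}\|\lesssim 1$; the second bound is obtained by writing $\langle \v{g},\v{h}\rangle=\fint_Q\langle U^{1/p}(x)\v{g},U^{-1/p}(x)\v{h}\rangle\,dx$ and applying H\"older with exponents $p,p'$, which yields $|\U_Q^{-1}\v{h}|\lesssim|\U_Q'\v{h}|$ for every $\v{h}$. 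Together these give the interchange rules
\begin{equation*}
 \|M\U_Q^{-1}\|\approx\|M\U_Q'\|,\qquad \|\V_Q^{-1}M\|\approx\|\V_Q'M\|,
\end{equation*}
valid for every matrix $M$ (and their analogues with $U$ and $V$ swapped). Throughout the proof I will freely interchange $\U_Q^{-1}\leftrightarrow\U_Q'$ and $\V_Q^{-1}\leftrightarrow\V_Q'$ whenever convenient.

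The equivalence $(a)\approx(b)$ is then essentially immediate: letting $(a_r)$ denote the $L^r$-averaged version of $(a)$, Lemma \ref{eJNLem} (applied with the supremum taken over all dyadic $J$, then followed by the standard three-shifted-dyadic-lattices argument to recover arbitrary cubes) gives $(b)\lesssim(a_{1+\epsilon'})$ and $(a_{1+\epsilon})\lesssim(b)$, while Jensen yields $(a)\le (a_{1+\epsilon})$ and Lemma \ref{TreilLem} yields $(a_{1+\epsilon'})\lesssim(a)$, closing the loop. For $(a)\approx(c)$ I then apply the just-proved $(a)\approx(b)$ argument to the dual data $(\tilde U,\tilde V,B^*,p')=(V^{-p'/p},U^{-p'/p},B^*,p')$; a short calculation shows that the reducing matrices for this dual setup are $\tilde{\U}_Q=\V_Q'$ and $\tilde{\V}_Q=\U_Q'$, so that the dual $(b)$ reads exactly as $(c)$, while the dual $(a)$ equals $\sup_Q\fint_Q\|\U_Q'(B^*(x)-m_QB^*)(\V_Q')^{-1}\|\,dx=\sup_Q\fint_Q\|(\V_Q')^{-1}(B(x)-m_QB)\U_Q'\|\,dx$ (using $\|A\|=\|A^*\|$ and the self-adjointness of the reducing matrices), which by the interchange rules is comparable to $(a)$.

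The pentagon is then closed via $(b)\le(d)$, $(c)\le(e)$, and $(d),(e)\lesssim(b)+(c)$. For $(b)\le(d)$ I insert $U^{-1/p}(y)U^{1/p}(y)=I$ inside the identity
\begin{equation*}
V^{1/p}(x)(B(x)-m_QB)\U_Q^{-1}=\fint_Q V^{1/p}(x)(B(x)-B(y))U^{-1/p}(y)\,U^{1/p}(y)\U_Q^{-1}\,dy,
\end{equation*}
take operator norms, apply H\"older in $y$ with exponents $p',p$, and bound $(\fint_Q\|U^{1/p}(y)\U_Q^{-1}\|^p\,dy)^{1/p}\lesssim 1$ via $\|A\|^p\lesssim_n\sum_i|A\v{e}_i|^p$ followed by the defining property of $\U_Q$; the $L^p_x$ norm then delivers $(b)\le(d)$, and $(c)\le(e)$ is symmetric. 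For $(d)\lesssim(b)+(c)$ I decompose $B(x)-B(y)=(B(x)-m_QB)-(B(y)-m_QB)$; the same $\|\cdot\|\lesssim_n(\sum|\cdot\v{e}_i|^{p'})^{1/p'}$ trick applied in the $y$-integral turns the first piece into
\begin{equation*}
\Big(\fint_Q\|V^{1/p}(x)(B(x)-m_QB)U^{-1/p}(y)\|^{p'}dy\Big)^{1/p'}\lesssim\|V^{1/p}(x)(B(x)-m_QB)\U_Q'\|\approx\|V^{1/p}(x)(B(x)-m_QB)\U_Q^{-1}\|,
\end{equation*}
whose $L^p_x$ norm is $(b)$, while the second piece factors as $V^{1/p}(x)\V_Q^{-1}\cdot\V_Q(B(y)-m_QB)U^{-1/p}(y)$, with the first factor peeling off (since its $L^p_x$ average is $\lesssim 1$) and the remaining inner integral equal to $(c)$ after the interchange $\V_Q\leftrightarrow(\V_Q')^{-1}$. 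The symmetric decomposition handles $(e)\lesssim(b)+(c)$. The main obstacle is purely bookkeeping: correctly tracking the reducing matrices through the left/right positions and invoking $\|(\U_Q\U_Q')^{-1}\|\lesssim 1$ (and the analogue for $V$) at exactly the right moments so that the interchanges $\U_Q^{-1}\leftrightarrow\U_Q'$ and $\V_Q^{-1}\leftrightarrow\V_Q'$ can be made without loss.
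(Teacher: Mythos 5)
Your proposal is correct and follows essentially the same route as the paper: reduction to shifted dyadic grids, $(a)\approx(b)$ via Lemmas \ref{eJNLem} and \ref{TreilLem}, $(a)\approx(c)$ by applying that equivalence to the dual data $V^{-p'/p},U^{-p'/p}$ with exponent $p'$, the insertion of $U^{-1/p}(y)U^{1/p}(y)$ plus H\"older for $(d)\Rightarrow(b)$ and $(e)\Rightarrow(c)$, and adding/subtracting $m_QB$ for the converse. The only difference is that you spell out the reducing-matrix interchanges $\U_Q^{-1}\leftrightarrow\U_Q'$, $\V_Q\leftrightarrow(\V_Q')^{-1}$ and the two-term decomposition in the last step more explicitly than the paper does, which is harmless.
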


    \begin{proof}  If $t \in \{0, \frac13\}^{d}$  and $\D^t = \{2^{-k} ([0, 1) ^d + m  + (-1)^k t)  : k \in \Z, m \in \Z^d\}$, then given any cube $Q$, there exists $t \in \{0, \frac13\}^d$ and $Q_t\in \D^t$ such that $Q\subset Q_t$ and $\ell(Q_t)\leq 6\ell(Q)$. Thus, by standard arguments, it is enough to prove the equivalence of $a) - e)$ for any fixed dyadic grid.

    With this in mind, the equivalence between the supremums in $a)$ and $ b)$ follows immediately from Lemmas \ref{eJNLem} and \ref{TreilLem}.
 As for $c)$, since $U$ and $V$ are matrix A${}_p$ weights,  \begin{align*} \| \V_Q (B(x) - m_Q B) \U_Q  ^{-1}  \| & = \| \U_Q  ^{-1}  (B^*(x) - m_Q B^*) \V_Q  \| \\ & \approx \| \U_Q '  (B^*(x) - m_I B^*) (\V_Q ' )^{-1}\|   \end{align*} and since clearly $\U_Q '$ is an $L^{p'}$ reducing matrix for $U^{-\frac{p'}{p}}$ and a similarly $\V_Q '$ is an $L^{p'}$ reducing operator for $V^{-\frac{p'}{p}}$, we get that the supremum in $a)$ is equivalent to the supremum in $c)$ by using the equivalence to $b)$ with respect to the pair $V^{-\frac{p'}{p}}, U^{-\frac{p'}{p}}, $ and the exponent $p'$.

    Also,  \begin{align*} \fint_Q & \| V^\frac{1}{p} (x) (B(x) - m_Q B ) \U_Q ^{-1} \| ^p \, dx
   \\ & \leq  \fint_Q \left( \fint_Q  \| V^\frac{1}{p} (x) (B(x) - B(y) ) \U_Q ^{-1}\|\, dy \right)^p \, dx
    \\ & \leq \fint_Q \left( \fint_Q  \| V^\frac{1}{p} (x) (B(x) - B(y) ) U^{-\frac{1}{p}} (y) \| \| U^{\frac{1}{p}} (y)  \U_Q ^{-1}\|\, dy \right)^p \, dx
\\ &    \lesssim  \fint_Q \left( \fint_Q  \| V^\frac{1}{p} (x) (B(x) - B(y) ) U^{-\frac{1}{p}} (y) \|^{p'} \, dy \right)^\frac{p}{p'} \, dx \end{align*} which proves that $d)$ implies $b)$, and similarly $e)$ implies $c)$.  Finally, adding and subtracting $m_Q B$ in both $d)$ and $e)$, respectively, shows that $b)$ and $c)$ together implies both $d)$ and $e)$, which completes the proof. \end{proof}

\end{document}